\documentclass[sn-mathphys,Numbered]{sn-jnl}


\usepackage{graphicx}%
\usepackage{multirow}%
\usepackage{amsmath,amssymb,amsfonts}%
\usepackage{amsthm}%
\usepackage{mathrsfs}%
\usepackage[title]{appendix}%
\usepackage{xcolor}%
\usepackage{textcomp}%
\usepackage{manyfoot}%
\usepackage{enumerate}

\usepackage{mathdots}  

\usepackage{tikz}
\colorlet{myblue}{blue!60!black}
\definecolor{bleudefrance}{rgb}{0.19, 0.55, 0.91}

\usepackage{color}
\usepackage[normalem]{ulem}
\newcommand{\red}[1]{{\color{red} #1}}

\newcommand{\C}{\mathbb C}
\newcommand{\R}{\mathbb R}

\newcommand{\Z}{\mathbb{Z}}
\newcommand{\N}{\mathbb{N}}

\newcommand{\spanset}{\mathrm{span}} 
\newcommand{\supp}{\mathrm{supp\,}} 

\makeatletter
\def\moverlay{\mathpalette\mov@rlay}
\def\mov@rlay#1#2{\leavevmode\vtop{%
   \baselineskip\z@skip \lineskiplimit-\maxdimen
   \ialign{\hfil$\m@th#1##$\hfil\cr#2\crcr}}}
\newcommand{\charfusion}[3][\mathord]{
    #1{\ifx#1\mathop\vphantom{#2}\fi
        \mathpalette\mov@rlay{#2\cr#3}
      }
    \ifx#1\mathop\expandafter\displaylimits\fi}
\makeatother

\def\bn{ \boldsymbol{n}}

\def\bx{ \boldsymbol{x}}

\def\bz{ \boldsymbol{z}}

\def\bgamma{ \boldsymbol{\gamma} }
\def\blambda{ \boldsymbol{\lambda} }
\def\btau{ \boldsymbol{\tau}}

\def\bomega{ \boldsymbol{\omega}}




\theoremstyle{thmstyleone}%
\newtheorem{theorem}{Theorem}
\newtheorem{proposition}[theorem]{Proposition}%
\newtheorem{corollary}[theorem]{Corollary}
\newtheorem{lemma}[theorem]{Lemma}

\newtheorem{definition}[theorem]{Definition}%
\newtheorem{example}[theorem]{Example}%
\newtheorem{remark}[theorem]{Remark}%

\theoremstyle{thmstyletwo}%

\theoremstyle{thmstylethree}%

\raggedbottom

\begin{document}

\title{Exponential bases for parallelepipeds with frequencies lying in a prescribed lattice}


\author*[1]{\fnm{Dae Gwan} \sur{Lee}}\email{daegwans@gmail.com, daegwan@kumoh.ac.kr} 

\author[2]{\fnm{G\"otz E.} \sur{Pfander}}\email{pfander@ku.de} 

\author[3]{\fnm{David} \sur{Walnut}}\email{dwalnut@gmu.edu} 

\affil*[1]{\orgdiv{Department of Mathematics and Big Data Science}, \orgname{Kumoh National Institute of Technology}, \orgaddress{\street{Daehak-ro~61}, \city{Gumi}, \state{Gyeongsangbuk-do} \postcode{39177}, \country{Republic of Korea (South Korea)}}}

\affil[2]{\orgdiv{Mathematical Institute for Machine Learning and Data Science (MIDS)}, \orgname{Katholische Universit\"at Eichst\"att--Ingolstadt}, \orgaddress{\street{Hohe-Schul-Str.~5}, \postcode{85049} \city{Ingolstadt}, \country{Germany}}}

\affil[3]{\orgdiv{Department of Mathematical Sciences}, \orgname{George Mason University}, \orgaddress{\street{4400~University Drive}, \city{Fairfax}, \state{VA} \postcode{22030}, \country{USA}}}

%
%


\abstract{The existence of a Fourier basis with frequencies in $\mathbb{R}^d$ for the space of square integrable functions supported on a given parallelepiped in $\mathbb{R}^d$, has been well understood since the 1950s. In a companion paper, we derived necessary and sufficient conditions for a parallelepiped in $\mathbb{R}^d$ to permit an orthogonal basis of exponentials with frequencies constrained to be a subset of a prescribed lattice in $\mathbb{R}^d$, a restriction relevant in many applications.
In this paper, we investigate analogous conditions for parallelepipeds that permit a Riesz basis of exponentials with the same constraints on the frequencies. We provide a sufficient condition on the parallelepiped for the Riesz basis case which directly extends one of the necessary and sufficient conditions obtained in the orthogonal basis case. We also provide a sufficient condition which constrains the spectral norm of the matrix generating the parallelepiped, instead of constraining the structure of the matrix.}
\keywords{complex exponentials, Fourier and Riesz bases, parallelograms and parallelepipeds, spectra, lattices}


\pacs[MSC 2020 Classification]{42B05, 42C15}

\maketitle

\section{Introduction and main results}
\label{sec:intro}

A complex exponential system is expressed as $\mathcal{E}(\Gamma) := \{e^{2\pi i \gamma \cdot (\cdot)} : \gamma \in\Gamma\}$ on a measurable set $S \subseteq \R^d$ called a \emph{domain}, where $\Gamma \subseteq \R^d$ is a discrete set called a \emph{frequency set} or a \emph{spectrum}.
For example, $\mathcal{E}(\Z)$ is an orthogonal basis for $L^2[0,1]$, illustrating the fundamental principle that every $1$-periodic function can be expressed as a weighted sum of sinusoidal waves with integer frequencies, an observation which lies at the heart of Fourier analysis.  
Nowadays, complex exponential systems, particularly exponential frames and bases, are used extensively in many areas of mathematics and engineering. 
For instance, OFDM (orthogonal frequency division multiplexing) based communications involve the design of exponential bases for unions of intervals.  
From an application perspective, it is preferable that the frequency components of the exponentials lie on a prescribed, often uniform rectangular grid such as the integer lattice, due to requirements of the application or hardware constraints.

The goal of this paper is to analyze the existence of orthogonal or Riesz bases of exponentials with frequencies constrained to a prescribed lattice, for domains that are parallelepipeds in $\R^d$.
Formally, we seek to construct a frequency set $\Gamma \subseteq B\Z^d$ such that $\mathcal{E}(\Gamma)$ is an orthogonal basis or a Riesz basis for $L^2( A [0,1]^d )$, where the matrices $A, B \in \R^{d\times d}$ are given a priori.

In a companion paper, we proved the following.


\begin{theorem}[Corollary 7 in \cite{LPW24-first-cubes}]
\label{thm:cubes-tiling-expRB}
For nonsingular matrices $A,B \in \R^{d \times d}$, the following are equivalent:
\begin{enumerate}[(i)]
     \item\label{list:thm1part12} there exists a lattice $\Lambda \subseteq B\Z^d$ with $\mathcal E(\Lambda)$ being an orthogonal basis for $L^2(A [0,1]^d)$;
     \item\label{list:thm1part22-prime}      
     there exist a nonsingular integer matrix $R \in \Z^{d \times d}$ and a lower unitriangular\footnote{A triangular matrix is called unitriangular if its diagonal entries are all equal to $1$.} matrix $G \in \R^{d \times d}$ with $A [0,1]^d = B^{-T} R^{-1} G [0,1]^d$.  
 \end{enumerate} 
 For $d\leq 7$, the statements above are also equivalent to: 
 \begin{enumerate}[(i)]
 \setcounter{enumi}{2}
     \item\label{list:thm1part32} there exists a set $\Phi \subseteq B\Z^d$ with $\mathcal E(\Phi)$ being an orthogonal basis for $L^2(A [0,1]^d)$.
\end{enumerate}   
\end{theorem}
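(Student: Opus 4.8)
The plan is to reduce every statement to a tiling condition via Fuglede's lattice criterion: for a full-rank lattice $\Lambda\subseteq\R^d$ and a bounded set $S$, the system $\mathcal{E}(\Lambda)$ is an orthogonal basis for $L^2(S)$ if and only if $S$ tiles $\R^d$ by the dual lattice $\Lambda^{*}=\{t:\langle t,\lambda\rangle\in\Z \text{ for all }\lambda\in\Lambda\}$. I would prove (i)$\Leftrightarrow$(ii) first. For (i)$\Rightarrow$(ii), observe that a full-rank sublattice of $B\Z^d$ has the form $\Lambda=BN\Z^d$ with $N\in\Z^{d\times d}$ nonsingular, so that $\Lambda^{*}=B^{-T}N^{-T}\Z^d$. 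By Fuglede, $A[0,1]^d$ tiles $\R^d$ by $\Lambda^{*}$, and since a parallelepiped tiles by a lattice $C\Z^d$ exactly when $C^{-1}A[0,1]^d$ tiles by $\Z^d$, this says that $N^{T}B^{T}A[0,1]^d$ tiles $\R^d$ by $\Z^d$.

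The second ingredient is the normal form for cubes tiling by the integer lattice, which I expect to be available from the companion paper: the image $M[0,1]^d$ tiles $\R^d$ by $\Z^d$ precisely when, after a permutation of coordinates, $M$ is lower unitriangular, so that $M[0,1]^d=G[0,1]^d$ for a lower unitriangular $G$. Applying this to $M=N^{T}B^{T}A$ and setting $R:=N^{T}\in\Z^{d\times d}$ yields $RB^{T}A[0,1]^d=G[0,1]^d$, which rearranges to $A[0,1]^d=B^{-T}R^{-1}G[0,1]^d$, i.e. (ii). The converse runs the same chain backwards: from the factorization one sets $\Lambda:=BR^{T}\Z^d\subseteq B\Z^d$, checks that $\Lambda^{*}=B^{-T}R^{-1}\Z^d$, notes that $A[0,1]^d$ tiles by $\Lambda^{*}$ because $G[0,1]^d$ tiles by $\Z^d$, and applies Fuglede once more. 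The only delicate point is the bookkeeping of signs and translations, so that tilings (which are invariant under translating the tile) are matched with the exact origin-cornered set equalities in (ii); this is routine through the signed-permutation normal form.

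The implication (i)$\Rightarrow$(iii) is immediate, since a lattice is a set. The whole difficulty is (iii)$\Rightarrow$(i). I would first transport the problem to the unit cube through the substitution $x=Ay$: writing $\Phi=B\Psi$ with $\Psi\subseteq\Z^d$ and $V:=A^{T}B$, the inner product $\langle B\psi,Ay\rangle$ equals $\langle V\psi,y\rangle$, so the hypothesis becomes that $\mathcal{E}(V\Psi)$ is an orthogonal basis for $L^2([0,1]^d)$ with the frequency set constrained to the lattice $V\Z^d$. Invoking the Lagarias--Reeds--Wang description of orthonormal exponential bases for the cube (a set is a spectrum of $[0,1]^d$ if and only if it is a translational tiling complement of $[0,1]^d$), this is equivalent to saying that $V\Psi+[0,1]^d$ is a tiling of $\R^d$ by unit cubes whose translation vectors all lie in the lattice $V\Z^d$. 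The target (i) is then the assertion that such a set-tiling can be replaced by a full-rank sublattice $\Lambda_0\subseteq V\Z^d$ with $\Lambda_0+[0,1]^d$ still a tiling; pulling $\Lambda_0$ back through $x=Ay$ delivers the lattice in (i).

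The hard part, and the origin of the dimension bound, is exactly this upgrade from a set-tiling to a lattice-tiling. I expect it to rest on Keller's cube-tiling theorem, which asserts that every tiling of $\R^d$ by translates of the unit cube contains two cubes meeting along a full $(d-1)$-dimensional facet, and which holds precisely for $d\le 7$ while failing for $d\ge 8$ --- matching the stated range exactly. Concretely, a shared facet corresponds to two frequencies of $V\Psi$ differing by a single lattice vector, which should permit a dimension-reducing step: peel off the corresponding coordinate, induct, and reassemble the sublattice $\Lambda_0$ coordinate by coordinate. I anticipate two main obstacles. First, a general cube tiling need not be periodic, so before inducting one must reduce to a periodic tiling, i.e. to a complementing-set (factorization) problem in a finite abelian quotient of $V\Z^d$; Keller's theorem together with the classical Hajós--Minkowski circle of ideas is what should make this reduction terminate. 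Second, the lattice constraint must be propagated through the induction so that the final object is genuinely a sublattice rather than merely a set --- and it is precisely the failure of the facet-sharing property for $d\ge 8$ that one expects to obstruct this, which is why the equivalence with (iii) is claimed only for $d\le 7$.
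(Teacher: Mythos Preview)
The paper does not prove this theorem: it is quoted verbatim as ``Corollary~7 in \cite{LPW24-first-cubes}'' from a companion paper and is used here only as background. So there is no proof in the present paper to compare your proposal against.

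That said, your outline for (i)$\Leftrightarrow$(ii) is sound and is almost certainly what the companion paper does: parametrize full-rank sublattices of $B\Z^d$ as $BN\Z^d$ with $N\in\Z^{d\times d}$ nonsingular, apply Fuglede's lattice criterion (this is exactly Proposition~\ref{prop:Fuglede-for-lattices} in the present paper), and then invoke the structural lemma that $M[0,1]^d$ tiles $\R^d$ by $\Z^d$ precisely when $M[0,1]^d=G[0,1]^d$ for some lower unitriangular $G$. You correctly flag that this last lemma is the substantive input and defer it to the companion paper.

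For (iii)$\Rightarrow$(i) your diagnosis of the dimension restriction is right: the bound $d\le 7$ is exactly the range in which Keller's conjecture holds, and the Lagarias--Reeds--Wang spectrum/tiling equivalence for the cube is the bridge. Your sketch of the inductive ``peel off a coordinate via a shared facet'' argument is plausible but genuinely incomplete as written. Keller's theorem guarantees a single twin pair, not that the whole tiling fibers over a coordinate direction; turning one shared facet into a full dimension reduction, while simultaneously preserving the constraint that the translation set sits inside the given lattice $V\Z^d$, requires more than you have said. In particular, you would need to argue that the twin direction can be chosen compatibly with the lattice structure so that the quotient problem is again of the same form in dimension $d-1$. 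This is where the real work lies, and your proposal does not yet supply it.
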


In this paper, we are interested in relaxing the condition of orthogonal bases to Riesz bases,
which will allow for a broader range of feasible matrices $A \in \R^{d \times d}$.  
Our first main result is the following.


\begin{theorem}\label{thm:main2-RB-general}
Assume that $A [0,1]^d = B^{-T} R^{-1} H [0,1]^d$ where $A,B \in \R^{d \times d}$ are nonsingular matrices, $R \in \Z^{d \times d}$ is a nonsingular integer matrix and $H \in \R^{d \times d}$ is a lower triangular matrix with diagonals lying in $(0,1]$. 
Then there exists a set $\Gamma\subseteq B\Z^d$ such that $\mathcal{E}(\Gamma)$ is a Riesz basis for $L^2(A [0,1]^d)$.
\end{theorem}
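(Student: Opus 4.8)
The plan is to reduce the statement, via a change of variables, to the construction of an integer-frequency Riesz basis for the ``straightened'' parallelepiped $H[0,1]^d$, and then to build such a basis by induction on the dimension, peeling off one coordinate at a time. The key point that makes the induction go through is that the off-diagonal entries of $H$ act only by a harmless modulation.

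\emph{Reduction.} For any nonsingular $C\in\R^{d\times d}$, the map $(Vf)(y)=|\det C|^{1/2}f(Cy)$ is a unitary from $L^2(CS)$ onto $L^2(S)$ sending the exponential with frequency $\gamma$ to a scalar multiple of the exponential with frequency $C^{T}\gamma$; hence $\mathcal E(\Gamma)$ is a Riesz basis for $L^2(CS)$ if and only if $\mathcal E(C^{T}\Gamma)$ is a Riesz basis for $L^2(S)$, with identical bounds. I apply this with $C=B^{-T}R^{-1}$ and $S=H[0,1]^d$, so that $CS=A[0,1]^d$. Writing a candidate set as $\Gamma=BJ$ with $J\subseteq\Z^d$, one computes $C^{T}\Gamma=R^{-T}J$. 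Since $R$ is an integer matrix, $\Z^d\subseteq R^{-T}\Z^d$, so it suffices to produce $\Gamma'\subseteq\Z^d$ with $\mathcal E(\Gamma')$ a Riesz basis for $L^2(H[0,1]^d)$: then $J:=R^{T}\Gamma'\subseteq\Z^d$ gives $R^{-T}J=\Gamma'$ and $\Gamma=BJ\subseteq B\Z^d$ does the job. The theorem thus reduces to: for every lower triangular $H$ with diagonal entries in $(0,1]$, there is an integer frequency set generating a Riesz basis for $L^2(H[0,1]^d)$.

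\emph{One-dimensional input and the modulation trick.} In the base case $d=1$ I need, for $h\in(0,1]$, a set $M\subseteq\Z$ with $\mathcal E(M)$ a Riesz basis for $L^2[0,h]$. Taking $\mu_k$ to be the nearest integer to $k/h$ yields a separated integer set of density $h$, and the resulting $\mathcal E(\{\mu_k\})$ is a Riesz basis for $L^2[0,h]$: this follows from Kadec's $1/4$ theorem when $h<1/2$, and for the remaining $h\in[1/2,1)$ from Avdonin's ``$1/4$ in the mean'' criterion, using that the block averages of $\mu_k-k/h$ tend to $0$ uniformly by equidistribution. The decisive observation for the induction is that the off-diagonal entries of $H$ only produce a modulation: for any $\theta\in\R$, multiplication by $e^{2\pi i\theta x}$ is unitary on $L^2[0,h]$, so $\mathcal E(\theta+M)=e^{2\pi i\theta x}\,\mathcal E(M)$ is a Riesz basis with exactly the same bounds as $\mathcal E(M)$, simultaneously for every $\theta$.

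\emph{Inductive shearing step and assembly.} Because $H$ is lower triangular, $H[0,1]^d=S\,\Omega$, where $\Omega=[0,h_{11}]\times\tilde H[0,1]^{d-1}$, the matrix $\tilde H$ is the lower-right $(d-1)\times(d-1)$ block, and $S=\bigl(\begin{smallmatrix}1&0\\ c&I\end{smallmatrix}\bigr)$ is the shear with $c=(h_{21},\dots,h_{d1})^{T}/h_{11}$. By the change-of-variables principle it suffices to make $\mathcal E(S^{T}\Gamma)$ a Riesz basis for $L^2(\Omega)$, and $S^{T}(m,n)=(m+c\cdot n,\,n)$. Let $\Gamma''\subseteq\Z^{d-1}$ be the integer Riesz basis for $L^2(\tilde H[0,1]^{d-1})$ from the inductive hypothesis, let $M\subseteq\Z$ be the set for $L^2[0,h_{11}]$ from the base case, and set $\Gamma=\{(m,n):n\in\Gamma'',\,m\in M\}\subseteq\Z^d$. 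For each fixed $n$ the first-coordinate frequencies are $(c\cdot n)+M$, which by the modulation trick form a Riesz basis for $L^2[0,h_{11}]$ with bounds independent of $n$. It remains to prove an assembly lemma: if $\{\chi_{n,m}\}_{m}$ is, for each $n$, a Riesz basis of $L^2(\Omega_1)$ with uniform bounds and $\{\phi_n\}_n$ is a Riesz basis of $L^2(\Omega_2)$, then $\{\chi_{n,m}\otimes\phi_n\}_{n,m}$ is a Riesz basis of $L^2(\Omega_1\times\Omega_2)$. This is checked by writing $\langle f,\chi_{n,m}\otimes\phi_n\rangle=\langle F_n,\chi_{n,m}\rangle_{\Omega_1}$ with $F_n(x)=\langle f(x,\cdot),\phi_n\rangle_{\Omega_2}$, summing first over $m$ (uniform first-factor bounds), then over $n$ (second-factor bounds), and using Fubini; the parallel computation for the synthesis operator yields the two-sided coefficient bounds defining a Riesz basis. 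Taking $\Omega_1=[0,h_{11}]$, $\Omega_2=\tilde H[0,1]^{d-1}$, $\phi_n(x')=e^{2\pi i n\cdot x'}$, $\chi_{n,m}(x_1)=e^{2\pi i(c\cdot n+m)x_1}$ completes the induction.

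\emph{Main obstacle.} The genuine obstacle is the shear $S$: a clean rectangular frequency set would force the first coordinate of $S^{T}\gamma$ to equal $m+c\cdot n$, which is non-integer, so naive tensoring leaves the lattice $\Z^d$. The resolution is that the shift $c\cdot n$ need never be integer, since the modulation unitary absorbs it with no change in Riesz bounds; thus a single $M$ serves all fibers. The step requiring the most care is the one-dimensional existence for $h\in[1/2,1)$, where Kadec's theorem no longer applies and one must use the sharper ``$1/4$ in the mean'' criterion together with a uniform equidistribution estimate for the block averages.
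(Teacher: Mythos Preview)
Your argument is correct and follows the same overall architecture as the paper: the change-of-variables reduction to $B=I_d$, the further reduction via $R^{T}$ to the lower-triangular parallelepiped $H[0,1]^d$, and an induction on $d$ resting on a one-dimensional rounding construction (the paper's Lemma~16), for which the Kadec/Avdonin dichotomy you describe is exactly what is needed.

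Where you diverge is in the packaging of the inductive step. The paper works directly on the slanted set $S=H[0,1]^d$, slicing it in the first coordinate (for fixed $\omega_1$ the slice is a translate of $H'[0,1]^{d-1}$) and arguing in Paley--Wiener language, proving completeness and the Riesz-sequence property separately via sets of uniqueness, sampling and interpolation. You instead factor $H=S\cdot\mathrm{diag}(h_{11},\tilde H)$ as a shear times a block-diagonal matrix, turning $H[0,1]^d$ into a genuine product $[0,h_{11}]\times\tilde H[0,1]^{d-1}$; the shear resurfaces only as the real shift $c\cdot n$ in the first frequency coordinate, which your modulation trick absorbs with no change in bounds, and a clean fibered assembly lemma finishes the step. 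This is the same mathematics seen from the frequency side rather than the spatial side, and it buys you a proof free of Paley--Wiener machinery. What the paper's route buys in exchange is slightly more: its Proposition~14 also shows that the rounded \emph{dual lattice} $\mathbf r(H^{-T}\Z^d)$ itself---where the one-dimensional fibre sets $\mathcal X_{\boldsymbol j}$ genuinely vary with $\boldsymbol j$---is an admissible frequency set, which requires the $\beta$-uniformity of Lemma~16 and not merely modulation invariance; your construction recovers only the tensor-product option~(ii).

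One small caution on the base case: ``by equidistribution'' handles irrational $h$ via Weyl--Khinchin, but for rational $h$ the fractional parts are periodic rather than equidistributed, and one must check directly (as the paper does in Appendix~C) that the periodic block averages of $h\delta_k$ remain strictly below $1/4$. This is routine, but should be said explicitly.
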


Setting $B = I_d$ reduces Theorem~\ref{thm:main2-RB-general} to the following.

\begin{theorem}\label{thm:main2-RB-integer}
Assume that $A [0,1]^d = R^{-1} H [0,1]^d$ where $A \in \R^{d \times d}$ is a nonsingular matrix, $R \in \Z^{d \times d}$ is a nonsingular integer matrix and $H \in \R^{d \times d}$ is a lower triangular matrix with diagonals lying in $(0,1]$.  
Then there exists a set $\Gamma\subseteq \Z^d$ such that $\mathcal{E}(\Gamma)$ is a Riesz basis for $L^2(A [0,1]^d)$.
\end{theorem}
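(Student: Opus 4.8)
Formally, Theorem~\ref{thm:main2-RB-integer} is the special case $B=I_d$ of Theorem~\ref{thm:main2-RB-general}: with $B=I_d$ one has $B^{-T}=I_d$ and $B\Z^d=\Z^d$, so the hypothesis $A[0,1]^d=R^{-1}H[0,1]^d$ and the conclusion $\Gamma\subseteq\Z^d$ match verbatim. Since the content lies in establishing such a statement in the first place, I will sketch a direct argument, which also isolates the core of Theorem~\ref{thm:main2-RB-general}. Throughout I use the standard fact that for nonsingular $M\in\R^{d\times d}$ the substitution $x\mapsto M^{-1}x$ shows that $\mathcal E(\Gamma)$ is a Riesz basis for $L^2(S)$ if and only if $\mathcal E(M^{-T}\Gamma)$ is a Riesz basis for $L^2(MS)$; in particular $\mathcal E(\Z^d)$ is an orthonormal basis for $L^2([0,1]^d)$.

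First I would remove the integer matrix $R$. A unimodular change of variables $x\mapsto Wx$ with $W\in GL_d(\Z)$ sends $A[0,1]^d$ to $WA[0,1]^d$ and the frequency set $\Gamma$ to $W^{-T}\Gamma$; since $W^{-T}\Z^d=\Z^d$, the constraint $\Gamma\subseteq\Z^d$ is preserved. Writing $R$ in (column) Hermite normal form, I may therefore assume $R$ is lower triangular, so that $C:=R^{-1}H$ is lower triangular with diagonal entries $c_{ii}=h_{ii}/r_{ii}$ of modulus at most $1$ (because $|r_{ii}|\ge1$ and $h_{ii}\in(0,1]$) and $A[0,1]^d=C[0,1]^d$. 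Factoring $C=\widetilde C\,D$ with $D=\mathrm{diag}(c_{ii})$ and $\widetilde C$ lower unitriangular, the parallelepiped $C[0,1]^d$ is contained in the super-domain $P_0:=\widetilde C[0,1]^d$, which, by the unitriangular case of Theorem~\ref{thm:cubes-tiling-expRB} (take $R=I_d$, $B=I_d$, $G=\widetilde C$), admits $\mathcal E(\Z^d)$ as an orthonormal basis for $L^2(P_0)$.

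Because $C[0,1]^d\subseteq P_0$, restricting this orthonormal basis to the smaller domain yields a Parseval frame $\mathcal E(\Z^d)|_{C[0,1]^d}$ for $L^2(C[0,1]^d)$; equivalently, after the change of variables $M=\widetilde C^{-1}$ the problem becomes: select $\Gamma\subseteq\Z^d$ so that $\mathcal E(\widetilde C^{T}\Gamma)$ is a Riesz basis for the box $Q:=\prod_{i=1}^d[0,|c_{ii}|]$, knowing that $\mathcal E(\widetilde C^{T}\Z^d)$ is already a Parseval frame for $L^2(Q)$. The matrix $\widetilde C^{T}$ is upper unitriangular, so its last row is $e_d^{T}$ and the $d$-th component of $\widetilde C^{T}k$ equals $k_d\in\Z$. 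This lets me peel off the last coordinate and argue by induction on $d$: I would build $\Gamma$ as a (sheared) product of one-dimensional frequency sets, where the base case requires, for each factor $[0,L]$ with $L=|c_{ii}|\le1$, a set $\Gamma_i\subseteq\Z$ with $\mathcal E(\Gamma_i)$ a Riesz basis for $L^2[0,L]$. Such a $\Gamma_i$ can be obtained by rounding the orthonormal frequencies $\tfrac1L\Z$ to the nearest integers: the displacements are bounded by $\tfrac12$ and, by equidistribution of $\{n/L\}$, satisfy an Avdonin ``$1/4$ in the mean'' condition, which guarantees the Riesz basis property.

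The main obstacle is the passage from frame to basis with controlled bounds. The upper Riesz bound comes for free, since any subfamily of the Parseval frame $\mathcal E(\widetilde C^{T}\Z^d)$ has upper bound $1$; the difficulty is the uniform lower bound. I expect the one-dimensional base case (particularly for irrational edge lengths $L$, where the rounded frequencies are a genuine perturbation of $\tfrac1L\Z$) and the propagation of a uniform lower Riesz bound through the inductive peeling in the presence of the unitriangular shear $\widetilde C^{T}$ to be the delicate points; keeping the constructed $\Gamma$ inside $\Z^d$ at every step, rather than in some auxiliary lattice, is exactly what forces the use of the clean last coordinate at each stage.
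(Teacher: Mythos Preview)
Your overall strategy matches the paper's: reduce to a lower-triangular generator, then induct on the dimension using one-dimensional rounded frequency sets whose Riesz basis property comes from an Avdonin-type argument via equidistribution. Two organizational choices, however, differ from the paper and one of them is needlessly heavy.

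For the integer matrix $R$, the paper disposes of it in one line. Once you have a set $\mathcal{C}\subseteq\Z^d$ with $\mathcal{E}(\mathcal{C})$ a Riesz basis for $L^2(H[0,1]^d)$, put $\Gamma:=R^{T}\mathcal{C}$. Because $R^{T}$ has integer entries, $\Gamma\subseteq\Z^d$, and your own change-of-variables rule gives that $\mathcal{E}(R^{T}\mathcal{C})$ is a Riesz basis for $L^2((R^{T})^{-T}H[0,1]^d)=L^2(R^{-1}H[0,1]^d)=L^2(A[0,1]^d)$. The Hermite normal form reduction is correct but unnecessary.

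For the induction, the paper does not pass to a box with sheared frequencies; it works directly on the parallelepiped (Proposition~\ref{prop:main2-lower-triangular}), peeling off the \emph{first} coordinate. Writing $H=\bigl(\begin{smallmatrix}a&0\\ \mathbf{c}&H'\end{smallmatrix}\bigr)$, every slice $\{\omega_1=\text{const}\}$ of $H[0,1]^d$ is a translate of $H'[0,1]^{d-1}$, so the induction hypothesis applies to $H'$; the key technical input is that the one-dimensional rounded sets $\mathrm{r}(\tfrac{1}{a}\Z+\delta)$ give Riesz bases for $L^2[0,a]$ with lower bound \emph{independent of the shift} $\delta$ (Lemma~\ref{lem:a-irrational-Beatty-Fraenkel}, proved exactly through the equidistribution you invoke together with explicit Avdonin bounds). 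Completeness and the Riesz-sequence property are then obtained by Paley--Wiener uniqueness and interpolation arguments, not merely by subselecting from the ambient Parseval frame. Your box reformulation with frequencies $\widetilde{C}^{T}\Gamma$ is dual to this setup (shear moved from the domain to the spectrum) and, once you unpack the fibres $\{(\widetilde{C}')^{T}\gamma'+jv\}\times\{j\}$, requires the very same fibered uniqueness/interpolation argument and the same uniform-in-shift lower bound; it does not simplify anything. The ``delicate point'' you flag is precisely what Lemma~\ref{lem:a-irrational-Beatty-Fraenkel} supplies.
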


This result deals only with frequency sets that are subsets of $\mathbb{Z}^d$.
Nevertheless, as we shall see later, Theorems~\ref{thm:main2-RB-general} and \ref{thm:main2-RB-integer} are in fact equivalent.

Note that the parallelepipeds $A [0,1]^d = B^{-T} R^{-1} G [0,1]^d$ in Theorem~\ref{thm:cubes-tiling-expRB} correspond to a special case of the parallelepipeds $A [0,1]^d = B^{-T} R^{-1} H [0,1]^d$ in Theorem~\ref{thm:main2-RB-general} with all diagonal entries of $H$ set to $1$.   
On the other hand, the structural conditions of the parallelepipeds can be lifted by instead constraining the spectral norm of $A$. 

\begin{theorem}
\label{thm:limiting-spectral-norm}
Let $A \in \R^{d \times d}$ be a nonsingular matrix with spectral norm less than $\frac{2\ln(2)}{\pi d^{3/2}}$, that is, $\| A \|_2 < \frac{2\ln(2)}{\pi d^{3/2}}$. 
If $\mathcal{C}$ is the set obtained by rounding each element of $A^{-T} \, \Z^d$ to its closest point in $\Z^d$, then $\mathcal{E}(\mathcal{C})$ is a Riesz basis for $L^2(A [0,1]^d)$.
\end{theorem}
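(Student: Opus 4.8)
The plan is to regard $\mathcal{C}$ as a small perturbation of the exact orthogonal spectrum $A^{-T}\Z^d$ and to invoke a Paley--Wiener-type stability principle. First I would record the classical fact that $\mathcal{E}(A^{-T}\Z^d)$ is an orthogonal basis for $L^2(A[0,1]^d)$: the substitution $x=Ay$ sends $e^{2\pi i A^{-T}n\cdot x}$ to $e^{2\pi i n\cdot y}$ and identifies $L^2(A[0,1]^d)$, up to the unitary scaling by $|\det A|^{1/2}$, with $L^2([0,1]^d)$, where $\{e^{2\pi i n\cdot y}\}_{n\in\Z^d}$ is an orthonormal basis. Writing $\gamma_n=A^{-T}n$ and letting $\lambda_n$ be the nearest integer point to $\gamma_n$, we have $\mathcal{C}=\{\lambda_n:n\in\Z^d\}$. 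I would first check that $n\mapsto\lambda_n$ is a bijection onto $\mathcal{C}$: if $\lambda_n=\lambda_{n'}$ then $\|\gamma_n-\gamma_{n'}\|_\infty\le 1$, hence $\|\gamma_n-\gamma_{n'}\|_2\le\sqrt{d}$, whereas the minimal distance of the lattice $A^{-T}\Z^d$ is at least $1/\|A\|_2>\pi d^{3/2}/(2\ln 2)>\sqrt{d}$; so distinct indices give distinct frequencies, and $\mathcal{E}(\mathcal{C})$ is indexed by $\Z^d$ in the same way as the orthogonal basis.

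Next I would apply the standard stability criterion in the form: if $\{e_n\}$ is an orthonormal basis of a Hilbert space and $\{f_n\}$ obeys $\big\|\sum_n c_n(e_n-f_n)\big\|\le\mu\,(\sum_n|c_n|^2)^{1/2}$ for all finite scalar sequences with some $\mu<1$, then $\{f_n\}$ is a Riesz basis. Here $e_n=|\det A|^{-1/2}e^{2\pi i\gamma_n\cdot x}$ and $f_n=|\det A|^{-1/2}e^{2\pi i\lambda_n\cdot x}$. After passing to the cube, the theorem reduces to the single inequality
\[
\Big\|\sum_n c_n\,e^{2\pi i n\cdot y}\big(e^{2\pi i u_n\cdot y}-1\big)\Big\|_{L^2([0,1]^d)}\le\mu\Big(\sum_n|c_n|^2\Big)^{1/2},\qquad u_n:=A^T(\lambda_n-\gamma_n),
\]
with $\mu<1$. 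Since rounding gives $\|\lambda_n-\gamma_n\|_\infty\le\tfrac12$, we have $\|u_n\|_2\le\|A\|_2\,\|\lambda_n-\gamma_n\|_2\le\|A\|_2\sqrt{d}/2$, so the hypothesis on $\|A\|_2$ forces $\|u_n\|_\infty\le\|u_n\|_2<(\ln 2)/(\pi d)$ uniformly in $n$.

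The crux, and the main obstacle, is this norm estimate, which is a quantitative multidimensional Kadec-type inequality with shifts $u_n$ that \emph{vary with} $n$. Two features make it delicate: the $n$-dependence prevents factoring out a single multiplier $e^{2\pi i u\cdot y}-1$, so the elementary bound $\|(e^{2\pi i u\cdot y}-1)G\|_2\le\|e^{2\pi i u\cdot y}-1\|_\infty\|G\|_2$ is unavailable; and expanding $e^{2\pi i u_n\cdot y}-1$ in the Fourier basis of the cube yields coefficients decaying only like $1/|k|$ (the periodized exponential has a jump), so a naive Schur or Wiener estimate diverges. This is exactly the obstruction that Kadec's argument is designed to circumvent. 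I would handle it by telescoping the product modulation coordinatewise,
\[
e^{2\pi i u_n\cdot y}-1=\sum_{\ell=1}^d\Big(\prod_{j<\ell}e^{2\pi i u_{n,j}y_j}\Big)\big(e^{2\pi i u_{n,\ell}y_\ell}-1\big),
\]
so that each summand is a one-dimensional perturbation of size $|u_{n,\ell}|$ dressed by unimodular factors, and then proving a one-dimensional estimate, \emph{linear} in $\sup_n|u_{n,\ell}|$, for the associated defect operator (via the representation $e^{2\pi i v y}-1=2\pi i v\int_0^1 e^{2\pi i svy}\,ds$ together with a crude Kadec upper frame bound to control $\big\|\sum_n c_n v_n e^{2\pi i(n+sv_n)\cdot y}\big\|_2$). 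Obtaining the sharp constant in this one-dimensional step is where the real work lies. Combining the $d$ coordinate contributions multiplicatively, the cumulative upper Riesz bound is at most $\prod_{\ell}\big(1+c\,\sup_n|u_{n,\ell}|\big)$ for an explicit $c$, and requiring this product to stay below $2$ (equivalently, the defect $\mu$ below $1$) amounts to $\sum_\ell c\,\sup_n|u_{n,\ell}|<\ln 2$. Since $\sup_n|u_{n,\ell}|\le\|u_n\|_\infty<(\ln 2)/(\pi d)$ for every $\ell$, this sum is below $\ln 2$ precisely when $c=\pi$, and tracing the bound $\|u_n\|_2\le\|A\|_2\sqrt{d}/2$ back through the argument is exactly what produces the threshold $\|A\|_2<\tfrac{2\ln 2}{\pi d^{3/2}}$.
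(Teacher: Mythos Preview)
Your reduction is exactly the paper's: change variables to the cube, set $u_n = A^T\lambda_n - n$, and use $\|u_n\|_2 \le \|A\|_2\sqrt{d}/2$ to get the uniform bound $\|u_n\|_\infty < \ln(2)/(\pi d)$. At that point the paper simply invokes Bailey's multidimensional Duffin--Eachus theorem (stated here as Theorem~\ref{thm:Bailey-Cor6-1}, Corollary~6.1 in \cite{Ba10}) as a black box, which says precisely that this $\ell^\infty$ bound on the perturbations $u_n$ forces $\mathcal{E}(\{n+u_n\})$ to be a Riesz basis for $L^2[0,1]^d$. So your strategy and the paper's coincide up to the point where you choose to reprove Bailey's theorem rather than cite it.

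It is in that last step that your sketch has a genuine gap. Your telescoping decomposition is \emph{additive}---it writes the defect as a sum of $d$ terms---yet you then speak of ``combining the $d$ coordinate contributions multiplicatively'' and bound an upper Riesz constant by $\prod_\ell(1+c\sup_n|u_{n,\ell}|)$, which is a product. No mechanism is given for passing from the sum to the product. Moreover, the one-dimensional piece hinges on controlling $\|\sum_n c_n v_n e^{2\pi i(n+\tilde u_n)\cdot y}\|_2$ by $\|c\|_2\sup_n|v_n|$ with constant exactly $1$; a ``crude Kadec upper frame bound'' for the perturbed system $\{e^{2\pi i(n+\tilde u_n)\cdot y}\}$ gives a Bessel constant strictly larger than $1$, which would inflate $c$ past $\pi$ and destroy the claimed threshold. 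The standard argument avoids both problems: on $[-\tfrac12,\tfrac12]^d$ expand
\[
e^{2\pi i u_n\cdot y}-1=\sum_{\mathbf{k}\ne 0}\frac{(2\pi i)^{|\mathbf{k}|}}{\mathbf{k}!}\,y^{\mathbf{k}}\,u_n^{\mathbf{k}},
\]
so that $\sum_n c_n e^{2\pi i n\cdot y}(e^{2\pi i u_n\cdot y}-1)=\sum_{\mathbf{k}\ne 0}\frac{(2\pi i)^{|\mathbf{k}|}y^{\mathbf{k}}}{\mathbf{k}!}\sum_n c_n u_n^{\mathbf{k}}e^{2\pi i n\cdot y}$; now orthonormality of the \emph{unperturbed} exponentials gives $\|\sum_n c_n u_n^{\mathbf{k}}e^{2\pi i n\cdot y}\|\le L^{|\mathbf{k}|}\|c\|_2$ with no loss, and $\|y^{\mathbf{k}}\|_\infty\le 2^{-|\mathbf{k}|}$ yields $\mu\le \sum_{\mathbf{k}\ne 0}(\pi L)^{|\mathbf{k}|}/\mathbf{k}!=e^{\pi d L}-1$, which is $<1$ exactly when $L<\ln(2)/(\pi d)$. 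If you replace your telescoping-plus-integral sketch by this power-series computation (or simply cite Theorem~\ref{thm:Bailey-Cor6-1}), the proof is complete and identical to the paper's.
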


Note that $\frac{2\ln(2)}{\pi d^{3/2}} \approx 0.1560$ for $d = 2$, and is even smaller for $d \geq 3$.
This is certainly much smaller than the optimal bound $1$ suggested by Theorems~\ref{thm:cubes-tiling-expRB}, \ref{thm:main2-RB-general} and \ref{thm:main2-RB-integer}. 
A proof of Theorem~\ref{thm:limiting-spectral-norm} is given in Section~\ref{sec:proof-main1}. 

\subsection{Related work}
\label{subsec:RelatedWork}

The literature on exponential bases is extensive. 
Below, we give a short review on exponential orthogonal/Riesz bases, focused on the papers that are closely related to our work. 

\subsubsection*{Exponential orthogonal bases}
Fuglede's conjecture \cite{Fu74} states that if $S \subseteq \R^d$ is a set of finite measure, then
there exists an orthogonal basis $\mathcal{E}(\Gamma)$ for $L^2(S)$ (with some $\Gamma \subseteq \R^d$)
if and only if the set $S$ tiles $\R^d$ by translations along a discrete set $\Omega \subseteq \R^d$ in the sense that
\[
\sum_{\gamma \in \Omega} \chi_S (x + \gamma) = 1
\quad \text{for a.e.} \;\; x \in \R^d ,
\]
where $\chi_S$ is the indicator function of $S$ defined by $\chi_S (x) = 1$ for $x \in S$, and $0$ otherwise. 
The conjecture is known to be false for $d \geq 3$ but it remains open for $d = 1,2$.
However, there are some special cases in which the conjecture is known to be true (see \cite{LM19} and the references therein).
For instance, the conjecture is true in all dimensions $d \geq 1$ if $\Gamma$ is a lattice in $\R^d$ \cite{Fu74} (see Proposition~\ref{prop:Fuglede-for-lattices} below), or if $S \subseteq \R^d$ is a convex set of finite positive measure \cite{LM19}.
In particular, when $S$ is the unit ball in $\R^d$, it was shown in \cite{IKP99} that no exponential orthogonal basis exists for $L^2(S)$ if $d \geq 2$, while in the $1$-dimensional case it is clear that $\mathcal{E}(\frac{1}{2}\Z)$ is an orthogonal basis for $L^2 [-1,1]$.
It was also shown that for any discrete set $\Gamma \subseteq \R^d$, the system $\mathcal{E}(\Gamma)$ is an orthogonal basis for $L^2[0,1]^d$ if and only if $\{ \gamma + [0,1]^d : \gamma \in \Gamma \}$ is a tiling of $\R^d$ \cite{JP99}. 
In fact, the cube $[0,1]^d$ in this statement can be replaced with any open set of measure $1$ \cite{Ko00}. 
Moreover, it was shown that if $S = A [0,1]^d$ with $A \in \mathrm{GL} (d,\R)$, then for a discrete set $\Gamma \subseteq \R^d$, the system $\mathcal{E}(\Gamma)$ is an orthogonal basis for $L^2(S)$ if and only if $\{ \gamma +  A^{-T} \, [0,1]^d : \gamma \in \Gamma \}$ is a tiling of $\R^d$ \cite{LRW00}.
For more details on recent progress on Fuglede's conjecture, see \cite{LM19} and the reference therein.

Han and Yang \cite{HW01} proved that
given any two lattices of the same density in $\R^d$, there exists a measurable set which is a fundamental domain for both lattices. This insight was central in seeing the possibility of finding non-obvious frequency sets for parallelepipeds.

\medskip

\subsubsection*{Exponential Riesz bases}
The problem considered here is closely related to so-called basis extraction due to Avdonin (\cite{Av74, AI95}, see also \cite{Se95}): 
if $\mathcal{E}(\Gamma)$ is a Riesz basis for $L^2[0,T]$ for some $\Gamma \subseteq \R$ and $T > 0$, then for each $0 < T' < T$ there exists a subset $\Gamma' \subseteq \Gamma$ such that $\mathcal{E}(\Gamma')$ is a Riesz basis for $L^2[0,T']$ (see Theorem II.4.16 in \cite{AI95}).  
In our case, if $A[0,1]^d\subseteq[0,1]^d$ then we are extracting a basis for $L^2(A[0,1]^d)$ from the basis $\mathcal{E}(\Z^d)$ of $L^2[0,1]^d$.

In 1995, Seip \cite{Se95} showed that for any interval $I \subseteq [0,1)$, there exists a set $\Gamma \subseteq \Z$ such that $\mathcal{E}(\Gamma)$ is a Riesz basis for $L^2(I)$.
In 2015, Kozma and Nitzan \cite{KN15} proved that if $S$ is a finite union of disjoint intervals in $[0,1)$, then there exists a set $\Gamma \subseteq \Z$ such that $\mathcal{E}(\Gamma)$ is a Riesz basis for $L^2(S)$.
So far, there are relatively few classes of sets for which exponential Riesz bases are known to exist, see e.g., \cite{CC18,CL22,DL19,Ko15,KN15,KN16,Le22,LPW23,PRW21}.
While most of the sets in $\R^d$ are deemed to admit exponential Riesz bases, it was shown recently that for a certain bounded measurable set $S \subseteq \R$, no set of exponentials can be a Riesz basis for $L^2(S)$ \cite{KNO21}.

\subsection{Organization of the paper}
\label{subsec:organization-paper}

In Section \ref{sec:prelim}, we review some background material including perturbation theorems, Fuglede's theorem and Paley--Wiener spaces.
In Section~\ref{sec:proof-thms-equivalence}, we provide a proof for the equivalence of Theorems~\ref{thm:main2-RB-general} and \ref{thm:main2-RB-integer}. 
Sections~\ref{sec:proof-thm-main2-RB} and \ref{sec:proof-main1} are devoted to the proof of Theorems~\ref{thm:main2-RB-integer} and \ref{thm:limiting-spectral-norm}, respectively.
In order to keep the paper self-contained, we provide a proof of Theorem~\ref{thm:tensor-product-Kadec-Avdonin}, Lemma~\ref{lem:RB-basic-operations} and Proposition~\ref{lem:a-irrational-Beatty-Fraenkel} in Appendix~\ref{sec:proof-thm:tensor-product-Kadec-Avdonin}, Appendix~\ref{sec:proof-lem:RB-basic-operations} and Appendix~\ref{sec:proof-lem:a-irrational-Beatty-Fraenkel}, respectively.

\section{Preliminaries}
\label{sec:prelim}

\subsection{Riesz Bases and perturbation theorems}

A Riesz basis is a generalization of an orthogonal basis which relaxes the condition of orthogonality while still maintaining some desirable properties---a Riesz basis is complete and minimal but not necessarily orthogonal.

\begin{definition}[see Definition 3.6.1 and Theorem 3.6.6 in \cite{Ch16}, or Definition 7.9 and Theorem 8.32 in \cite{He11}]
\label{def:RieszBases}
Let $\{ e_n \}_{n\in\Z}$ be an orthonormal basis for a separable Hilbert space $\mathcal{H}$.
A sequence $\{ f_n \}_{n\in\Z} \subseteq \mathcal{H}$ is a \emph{Riesz basis} for $\mathcal{H}$ if
there exists a bijective bounded operator $U : \mathcal{H} \rightarrow \mathcal{H}$ such that $U e_n = f_n$ for all $n \in \Z$.
Equivalently, a sequence $\{ f_n \}_{n\in\Z} \subseteq \mathcal{H}$ is a \emph{Riesz basis} for $\mathcal{H}$ if it is complete in $\mathcal{H}$, i.e., $\overline{\spanset} \, \{ f_n \}_{n\in\Z} = \mathcal{H}$, and
if there are constants $0 < A \leq B < \infty$ such that
\begin{equation}\label{eqn:HilbertSpaceRieszCondition}
A \, \sum_{n \in \Z} | c_n |^2
\leq \Big\| \sum_{n\in\Z} c_n \, f_n \Big\|^2
\leq B \, \sum_{n \in \Z} | c_n |^2
\quad \text{for all} \;\;  \{c_n\}_{n\in\Z} \in \ell_2 (\mathbb Z) .
\end{equation}
A sequence $\{ f_n \}_{n\in\Z} \subseteq \mathcal{H}$ is called a \emph{Riesz sequence} if it satisfies \eqref{eqn:HilbertSpaceRieszCondition} for some $0 < A \leq B < \infty$, and a \emph{Bessel sequence} if it satisfies the right inequality of \eqref{eqn:HilbertSpaceRieszCondition} for some $B > 0$.
\end{definition}


The set of exponentials $\mathcal{E}(\Z) = \{e^{2\pi i n (\cdot)} : n \in \Z \}$ is an orthogonal basis for $L^2[0,1]$, and if each integer is perturbed slightly then one still obtains a Riesz basis.

\begin{proposition}[Kadec's $\frac{1}{4}$-theorem \cite{Ka64}, see also Theorem 14 on p.\,36 in \cite{Yo01}]
\label{prop:Kadec-1-4}
Let $\{ \gamma_n \}_{n \in \Z} \subseteq \R$ be a sequence satisfying
\begin{equation}\label{eqn:Kadec-condition}
\sup_{n \in \Z} | \gamma_n - n |
\;\leq\; L
\;<\; \tfrac{1}{4} .
\end{equation}
Then $\mathcal{E}( \{ \gamma_n \}_{n \in \Z} )$ is a Riesz basis for $L^2[0,1]$ with bounds $(1 - B(L))^2$ and $(1 + B(L))^2$, where $B(L) = 1 - \cos (\pi L) + \sin (\pi L)$.
\end{proposition}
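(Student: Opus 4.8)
The plan is to combine a Neumann-series perturbation argument with a sharp estimate of a single deviation operator. Write $e_n := e^{2\pi i n(\cdot)}$ and $f_n := e^{2\pi i \gamma_n (\cdot)}$; since $\mathcal{E}(\Z) = \{e_n\}_{n\in\Z}$ is an orthonormal basis for $L^2[0,1]$, I would define the linear operator $T$ on $L^2[0,1]$ by $T e_n = f_n - e_n$. Everything reduces to the single norm bound
\[
\Big\| \sum_{n} c_n (f_n - e_n) \Big\|_{L^2[0,1]} \;\leq\; B(L) \Big( \sum_{n} |c_n|^2 \Big)^{1/2}
\]
for all finitely supported $\{c_n\}$. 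Granting this, $\|T\| \leq B(L) < 1$, so $U := I + T$ is bounded and boundedly invertible with $U e_n = f_n$; hence $\{f_n\}$ is the image of an orthonormal basis under a bounded bijection and is a Riesz basis in the sense of Definition~\ref{def:RieszBases}. The Riesz bounds drop out of $\|U\| \leq 1 + B(L)$ and $\|U^{-1}\| \leq (1 - B(L))^{-1}$: writing $\|\sum c_n f_n\| = \|U(\sum c_n e_n)\|$ and using orthonormality of $\{e_n\}$ yields exactly $(1-B(L))^2$ and $(1+B(L))^2$ after squaring.

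To establish the displayed bound I would first translate to the symmetric interval $[-\tfrac12,\tfrac12]$, which multiplies each $f_n$ by a unimodular constant and alters neither the Riesz-basis property nor the bounds. Setting $\delta_n := \gamma_n - n$, so $|\delta_n| \leq L < \tfrac14$, I factor $f_n - e_n = e^{2\pi i n (\cdot)} (e^{2\pi i \delta_n (\cdot)} - 1)$ and split into real and imaginary parts, $e^{2\pi i \delta_n t} - 1 = -(1 - \cos 2\pi\delta_n t) + i \sin 2\pi \delta_n t$. Expanding each factor in the real Fourier basis on $[-\tfrac12,\tfrac12]$ gives $1 - \cos 2\pi \delta t = A_0(\delta) + \sum_{k\geq 1} A_k(\delta) \cos 2\pi k t$ and $\sin 2\pi \delta t = \sum_{k\geq 1} B_k(\delta) \sin 2\pi k t$, with $A_0(\delta) = 1 - \tfrac{\sin\pi\delta}{\pi\delta}$, $|A_k(\delta)| = \tfrac{2\delta \sin\pi\delta}{\pi(k^2 - \delta^2)}$ and $|B_k(\delta)| = \tfrac{2k \sin\pi\delta}{\pi(k^2-\delta^2)}$ for $k \geq 1$ and $0 \leq \delta \leq L$. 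Substituting these and collecting the coefficient of each $e^{2\pi i m t}$ decomposes $T = E + S$ into a cosine part $E$ and a sine part $S$, to be estimated separately and combined by the triangle inequality.

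The cosine part is the easy one, because its coefficients are absolutely summable. Its matrix has diagonal entries $A_0(\delta_m)$ and off-diagonal entries $\tfrac12 A_{|m-n|}(\delta_n)$, and since $\tfrac{\delta\sin\pi\delta}{k^2-\delta^2}$ is increasing on $[0,L]$ for each $k \geq 1$, every row and column sum of the matrix of absolute values is at most $A_0(L) + \sum_{k\geq 1}|A_k(L)|$. Using the partial-fraction identity $\sum_{k\geq 1} \tfrac{1}{k^2 - \delta^2} = \tfrac{1 - \pi\delta\cot\pi\delta}{2\delta^2}$, this sum collapses to $1 - \cos\pi L$, so the Schur test gives $\|E\| \leq 1 - \cos\pi L$.

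The sine part $S$ is the crux and the expected obstacle. Its off-diagonal entries behave like $\tfrac{\sgn(m-n)}{m-n}\cdot\tfrac{2\sin\pi\delta_n}{\pi}$ for $|m-n|$ large, so the absolute row and column sums \emph{diverge} and the Schur test is useless; one must exploit the cancellation carried by $\sgn(m-n)$. I would factor out the diagonal weights $\tfrac{2\sin\pi\delta_n}{\pi}$ (bounded by $\tfrac{2\sin\pi L}{\pi}$) and recognize the remaining kernel as a sign-modulated discrete Hilbert transform: the modulation $(-1)^{|m-n|}$ is conjugation by a diagonal unitary and leaves the norm unchanged, and the discrete Hilbert transform is bounded on $\ell_2(\Z)$ with norm exactly $\pi$. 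Tracking constants then yields $\|S\| \leq \tfrac{2\sin\pi L}{\pi}\cdot\tfrac12\cdot\pi = \sin\pi L$, with the $\delta_n^2$-corrections in the denominators absorbed by the analogous but absolutely convergent tail. Combining, $\|T\| \leq \|E\| + \|S\| \leq (1 - \cos\pi L) + \sin\pi L = B(L)$, which is strictly less than $1$ precisely when $L < \tfrac14$ (indeed $B(\tfrac14) = 1$), completing the proof.
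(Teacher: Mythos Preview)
The paper does not give its own proof of this proposition; it is quoted as a classical result with references to Kadec \cite{Ka64} and Young \cite{Yo01}. So the only meaningful comparison is against the standard proof found in those references, whose overall architecture---Neumann series via $\|T\|<1$, with $T e_n = f_n - e_n$---you reproduce correctly, including the cosine estimate $\|E\|\le 1-\cos\pi L$.

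The gap is in your treatment of the sine part. After factoring out $\sin\pi\delta_n$ you are left with a kernel whose $(m,n)$ entry is $\dfrac{(-1)^{|m-n|}\sgn(m-n)\,|m-n|}{\pi(|m-n|^2-\delta_n^2)}$, and this does \emph{not} split as a discrete Hilbert transform times a diagonal: the residual $\delta_n^2$ in the denominator depends on the column index. Writing $\dfrac{k}{k^2-\delta_n^2}=\dfrac{1}{k}+\dfrac{\delta_n^2}{k(k^2-\delta_n^2)}$ produces a genuine correction operator whose Schur bound is strictly positive (of order $L^3$ for small $L$, but nonvanishing up to $L=\tfrac14$). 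Since the Hilbert-transform estimate for the leading piece is already sharp at $\sin\pi L$, the correction is not ``absorbed'' anywhere, and your bound becomes $B(L)+\varepsilon(L)$ with $\varepsilon(L)>0$. Because $B(L)\to 1$ as $L\to\tfrac14^-$, this would fail to give $\|T\|<1$ for $L$ near $\tfrac14$, so the argument as written does not cover the full range $L<\tfrac14$ and does not deliver the stated bounds.

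The classical proof avoids exactly this obstacle by a different choice of basis for the odd part: instead of expanding $\sin 2\pi\delta t$ in the integer sines $\sin 2\pi k t$, one expands in the \emph{half-integer} sines $\sin\pi(2k-1)t$, $k\ge 1$, which also form an orthogonal basis for odd functions on $[-\tfrac12,\tfrac12]$. The resulting coefficients have absolute values $\dfrac{2\delta\cos\pi\delta}{\pi\bigl((k-\tfrac12)^2-\delta^2\bigr)}$, which are absolutely summable, and the partial-fraction identity $\displaystyle\sum_{k\ge 1}\dfrac{1}{(k-\tfrac12)^2-\delta^2}=\dfrac{\pi\tan\pi\delta}{2\delta}$ collapses the sum to exactly $\sin\pi\delta\le\sin\pi L$. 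A plain Schur/triangle-inequality bound then gives $\|S\|\le\sin\pi L$ with no Hilbert transform and no leftover correction. Replacing your sine expansion by this one repairs the argument and yields the sharp constant $B(L)$.
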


The condition \eqref{eqn:Kadec-condition} means that $\{ \gamma_n \}_{n \in \Z}$ is a pointwise perturbation of $\Z$ up to $L \; (<  \frac{1}{4})$.
Avdonin \cite{Av74} weakened the condition to ``perturbation in the mean''.

\begin{proposition}[Avdonin's theorem of ``$\frac{1}{4}$ in the mean'' \cite{Av74}, also see the notes on p.\,178 in \cite{Yo01}]
\label{prop:Avdonin-1-4-in-the-mean}
Let $\{ \gamma_n = n + \delta_n \}_{n \in \Z} \subseteq \R$ be a separated sequence, that is, $\inf \{ |\gamma_n - \gamma_{n'}| : n,n'\in\Z ,  \; n \neq n' \} > 0$.
If there exists an integer $P \in \N$ with
\begin{equation}\label{eqn:Avdonin-condition}
\sup_{m \in \Z}
\tfrac{1}{N}
\Big| \sum_{k=mP}^{(m+1)P-1} \delta_k \Big|
\;\leq\; L
\;<\; \tfrac{1}{4} ,
\end{equation}
then $\mathcal{E}( \{ \gamma_n \}_{n \in \Z} )$ is a Riesz basis for $L^2[0,1]$.
\end{proposition}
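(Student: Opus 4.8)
The plan is to split the perturbation into a \emph{block-averaged} part, to which Kadec's $\tfrac14$-theorem applies directly, and a \emph{mean-zero} remainder that must be controlled by complex-analytic methods. For the block index $m=\lfloor n/P\rfloor$, set $\bar\delta_m=\tfrac1P\sum_{k=mP}^{(m+1)P-1}\delta_k$, the average of $\delta$ over the $m$-th block of $P$ consecutive indices; this is exactly the quantity bounded by $L$ in \eqref{eqn:Avdonin-condition}. Decompose $\delta_n=\bar\delta_{\lfloor n/P\rfloor}+\varepsilon_n$, so that $\sum_{k=mP}^{(m+1)P-1}\varepsilon_k=0$ for every $m$. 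Thus the residual $\varepsilon_n$ sums to zero over each block even though individual $\varepsilon_n$ (equivalently, individual $\delta_n$) may be large, which is precisely what makes this statement stronger than Kadec's.

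First I would handle the averaged frequencies $\mu_n:=n+\bar\delta_{\lfloor n/P\rfloor}$. Since $|\mu_n-n|=|\bar\delta_{\lfloor n/P\rfloor}|\le L<\tfrac14$, Proposition~\ref{prop:Kadec-1-4} applies verbatim and shows that $\mathcal{E}(\{\mu_n\}_{n\in\Z})$ is a Riesz basis for $L^2[0,1]$; moreover the $\mu_n$ are separated, with consecutive gaps at least $1-2L>\tfrac12$. This disposes of the mean part of the perturbation using only the tool already at hand.

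The main obstacle is the transfer from $\{\mu_n\}$ to $\{\gamma_n\}=\{\mu_n+\varepsilon_n\}$, where a pointwise perturbation argument is unavailable because the $\varepsilon_n$ need not be small; the only leverage is the cancellation $\sum_{k\in\text{block }m}\varepsilon_k=0$ together with the separation of $\{\gamma_n\}$. My plan is to pass to the generating entire functions $G_\gamma,G_\mu$ of the two families in the Paley--Wiener space associated with $L^2[0,1]$ and to compare their moduli on the real line via the block factorization
\[
\log\Bigl|\tfrac{G_\gamma(x)}{G_\mu(x)}\Bigr|=\sum_m\ \sum_{k=mP}^{(m+1)P-1}\log\Bigl|\tfrac{x-\gamma_k}{x-\mu_k}\Bigr| .
\]
For a block $m$ far from $x$, expanding each summand in $\varepsilon_k$ gives a leading term $-\sum_k\varepsilon_k/(x-\mu_k)$ whose slowly decaying part is annihilated by $\sum_k\varepsilon_k=0$, leaving a contribution of size $O(\mathrm{dist}(x,\text{block }m)^{-2})$ (with an implied constant depending on $P$) that is summable over $m$; the finitely many blocks adjacent to $x$ are estimated directly from separation. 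Showing that $|G_\gamma|^2$ and $|G_\mu|^2$ are comparable then certifies that $|G_\gamma|^2$ inherits the Muckenhoupt $A_2$ property of $|G_\mu|^2$ (which holds since $\{\mu_n\}$ is already a Riesz basis), whence the Hru\v{s}\v{c}ev--Nikolskii--Pavlov characterization of exponential Riesz bases yields that $\mathcal{E}(\{\gamma_n\})$ is a Riesz basis.

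To finish, separation of $\{\gamma_n\}$ furnishes the upper (Bessel) bound for free, while completeness and the lower bound are encoded in the modulus comparison. I expect the genuinely delicate point to be the blocks adjacent to $x$: there $x$ may lie near a frequency whose $\varepsilon_k$ is large, so the cancellation must be combined carefully with separation rather than with a convergent expansion. If the $A_2$ estimate proves unwieldy, the alternative I would pursue is to build a bounded invertible operator intertwining the two families block by block, exploiting that only $P$ frequencies are displaced per block and that their displacements sum to zero --- the grouping strategy underlying Avdonin's original argument.
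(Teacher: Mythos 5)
First, a framing remark: the paper does not prove this proposition --- it is quoted from Avdonin \cite{Av74} (see also \cite{Yo01}) --- so your proposal has to be measured against the standard literature argument, which in outline it tracks faithfully: the decomposition $\delta_n=\bar\delta_{\lfloor n/P\rfloor}+\varepsilon_n$ with block-wise cancellation, Kadec's theorem (Proposition~\ref{prop:Kadec-1-4}) for the averaged sequence $\{\mu_n\}$, and a comparison of generating functions feeding into the Hru\v{s}\v{c}ev--Nikolskii--Pavlov $A_2$ criterion is precisely the modern route to Avdonin's theorem. However, two concrete gaps remain. The first is that you propose to prove $|G_\gamma(x)|\asymp|G_\mu(x)|$ \emph{on the real line}. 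A two-sided comparison there is impossible whenever some $\varepsilon_k\neq 0$: $G_\gamma$ vanishes at $x=\gamma_k$ while $G_\mu(\gamma_k)\neq 0$, and symmetrically at $x=\mu_k$, so the ratio is neither bounded above nor below near the two (distinct) real zero sets, and no care with the ``blocks adjacent to $x$'' can repair this. This is exactly why the Pavlov/HNP criterion imposes the Muckenhoupt condition on a horizontal line $\{\Im z=y_0\}$ with $y_0>0$, not on $\R$: for $\Lambda=\Z$ one has $G(z)=\sin\pi z$, and $\sin^2\pi x$ is \emph{not} an $A_2$ weight on $\R$ (its reciprocal is not locally integrable at the zeros), whereas $|\sin\pi(x+iy_0)|^2\asymp 1$. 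The standard fix is to run your block expansion at height $y_0$ sufficiently large: there every factor $\log\bigl|(z-\gamma_k)/(z-\mu_k)\bigr|$ is uniformly bounded, the far-block cancellation estimate $O(\mathrm{dist}(x,\text{block }m)^{-2})$ goes through verbatim and is summable, comparability of the two weights on that line follows, $A_2$ is stable under $\asymp$, and the delicate near-$x$ analysis you flag essentially evaporates.

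The second gap is a missing hypothesis that your expansion silently uses: the leading-term cancellation plus quadratic remainder requires $\sup_k|\varepsilon_k|<\infty$, equivalently $\sup_n|\delta_n|<\infty$. This does \emph{not} follow from separation together with \eqref{eqn:Avdonin-condition}: within a single block a large positive and a large negative $\delta_k$ can cancel in the mean while the whole sequence remains separated. Boundedness of $\{\delta_n\}$ is in fact a standing assumption in Avdonin's theorem --- it appears explicitly as $\sup_n|\delta_n|\leq B$ in Lindner's quantitative version, Proposition~\ref{prop:thm1-in-Li99} in the appendix --- and the statement above omits it (note also the typo $\tfrac{1}{N}$ for $\tfrac{1}{P}$ in \eqref{eqn:Avdonin-condition}). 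With boundedness stated and the modulus comparison moved off the real axis, your sketch is a correct skeleton of the standard proof; what then remains (checking that $G_\gamma$ is entire of the correct exponential type with indicator diagram of the right width, so that the HNP criterion applies, and the Bessel bound from separation, which you already note) is routine given the same estimates.
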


In higher dimensions, we can construct exponential Riesz bases for $L^2[0,1]^d$ by taking a product of the sets $\{ \gamma_n \}_{n \in \Z}$ from Propositions \ref{prop:Kadec-1-4} and \ref{prop:Avdonin-1-4-in-the-mean}.

\begin{theorem}[cf.~Theorem 2.1 in \cite{SZ99-ACHA}, Theorem 1.1 in \cite{SZ99-JMAA}]
\label{thm:tensor-product-Kadec-Avdonin}
Let $d \in \N$.
For each $k = 1, \ldots, d$, let $\Gamma^{(k)} = \{ \gamma_n^{(k)} \}_{n \in \Z} \subseteq \R$ be a sequence satisfying \eqref{eqn:Kadec-condition} or \eqref{eqn:Avdonin-condition}.
Then $\mathcal{E}( \Gamma^{(1)} {\times} \ldots {\times} \Gamma^{(d)} )$ is a Riesz basis for $L^2[0,1]^d$.
Moreover, if all the sets $\Gamma^{(k)} = \{ \gamma_n^{(k)} \}_{n \in \Z}$ satisfy \eqref{eqn:Kadec-condition}, that is, if $L^{(k)} := \sup_{n \in \Z} | \gamma_n^{(k)} - n | < \frac{1}{4}$ for all $k = 1, \ldots, d$, then the Riesz bounds can be chosen to be $\prod_{k=1}^d (1 - B(L^{(k)}))^2$ and $\prod_{k=1}^d (1 + B(L^{(k)}))^2$, where $B(L) = 1 - \cos (\pi L) + \sin (\pi L) < 1$ for $0 \leq L < \frac{1}{4}$.
\end{theorem}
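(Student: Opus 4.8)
The plan is to argue by induction on $d$, taking the one-dimensional Riesz basis property from Propositions~\ref{prop:Kadec-1-4} and \ref{prop:Avdonin-1-4-in-the-mean} as the base case $d=1$ and combining the coordinate directions by a Fubini-type argument. The key structural observation is that the $d$-dimensional exponentials factor across coordinates: with $\bn=(n_1,\dots,n_d)$ and $e_{\bn}(\bx)=\prod_{k=1}^d e^{2\pi i \gamma_{n_k}^{(k)} x_k}$, the system $\mathcal{E}(\Gamma^{(1)}\times\dots\times\Gamma^{(d)})$ is precisely the tensor product of the one-dimensional systems $\mathcal{E}(\Gamma^{(k)})$ under the identification $L^2[0,1]^d\cong L^2[0,1]\otimes\dots\otimes L^2[0,1]$. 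For each $k$, Proposition~\ref{prop:Kadec-1-4} or \ref{prop:Avdonin-1-4-in-the-mean} supplies Riesz bounds $0<A_k\le B_k<\infty$ for $\mathcal{E}(\Gamma^{(k)})$ on $L^2[0,1]$, which in the Kadec case may be taken as $A_k=(1-B(L^{(k)}))^2$ and $B_k=(1+B(L^{(k)}))^2$.

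First I would establish the two-sided inequality \eqref{eqn:HilbertSpaceRieszCondition} with the product bounds $\prod_{k=1}^d A_k$ and $\prod_{k=1}^d B_k$. By density it suffices to treat finitely supported coefficient arrays $\{c_{\bn}\}$, which removes all convergence concerns. Writing $\bx=(\bx',x_d)$ with $\bx'\in[0,1]^{d-1}$ and grouping by the last index, set $F_{n_d}(\bx')=\sum_{\bn'} c_{\bn',n_d}\,\prod_{k=1}^{d-1} e^{2\pi i \gamma_{n_k}^{(k)} x_k}$, so that $\sum_{\bn} c_{\bn} e_{\bn}(\bx)=\sum_{n_d} F_{n_d}(\bx')\,e^{2\pi i \gamma_{n_d}^{(d)} x_d}$. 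For each fixed $\bx'$, the one-dimensional estimate in the variable $x_d$, applied to the (finitely supported) sequence $\{F_{n_d}(\bx')\}_{n_d}$, yields
\[
A_d \sum_{n_d} |F_{n_d}(\bx')|^2 \;\le\; \int_0^1 \Big|\sum_{n_d} F_{n_d}(\bx')\, e^{2\pi i \gamma_{n_d}^{(d)} x_d}\Big|^2\,dx_d \;\le\; B_d \sum_{n_d} |F_{n_d}(\bx')|^2 .
\]
Integrating over $\bx'$ and invoking Tonelli's theorem, the quantity $\int_{[0,1]^{d-1}} |F_{n_d}(\bx')|^2\,d\bx'$ becomes the squared $L^2[0,1]^{d-1}$-norm of $\sum_{\bn'} c_{\bn',n_d}\, e_{\bn'}$, to which the inductive hypothesis applies with bounds $\prod_{k=1}^{d-1} A_k$ and $\prod_{k=1}^{d-1} B_k$. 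Summing over $n_d$ then gives \eqref{eqn:HilbertSpaceRieszCondition} with the claimed product bounds.

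It remains to check completeness, i.e. $\overline{\spanset}\,\mathcal{E}(\Gamma^{(1)}\times\dots\times\Gamma^{(d)})=L^2[0,1]^d$. Since each $\mathcal{E}(\Gamma^{(k)})$ is a genuine Riesz basis for $L^2[0,1]$ it is complete, and finite linear combinations of products $\prod_{k=1}^d g_k(x_k)$ with $g_k\in\spanset\,\mathcal{E}(\Gamma^{(k)})$ are dense in $L^2[0,1]^d$; hence so is $\spanset\,\mathcal{E}(\Gamma^{(1)}\times\dots\times\Gamma^{(d)})$. Equivalently, writing $U_k e_n=e^{2\pi i \gamma_n^{(k)}(\cdot)}$ for a bounded bijection $U_k$ on $L^2[0,1]$ as in Definition~\ref{def:RieszBases}, the operator $U_1\otimes\dots\otimes U_d$ is a bounded bijection on $L^2[0,1]^d$ mapping the orthonormal basis $\{e^{2\pi i \bn\cdot(\cdot)}\}_{\bn\in\Z^d}$ onto $\mathcal{E}(\Gamma^{(1)}\times\dots\times\Gamma^{(d)})$, which already yields the Riesz basis property.

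The step I expect to require the most care is the induction itself: one applies the one-dimensional inequality pointwise in $\bx'$ with the \emph{function-valued} ``coefficients'' $F_{n_d}(\bx')$ and only afterwards integrates in $\bx'$. This interchange is legitimate precisely because the preliminary reduction to finitely supported $\{c_{\bn}\}$ makes each $F_{n_d}$ a finite trigonometric sum, leaves only finitely many of them nonzero, and renders all integrands nonnegative, so that Tonelli's theorem and the exchange of summation and integration cause no difficulty. The remaining ingredients---positivity of the product lower bound (guaranteed by $B(L^{(k)})<1$ in the Kadec case and by $A_k>0$ in the Avdonin case) and the density reduction---are routine.
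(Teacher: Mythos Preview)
Your proposal is correct and follows essentially the same approach as the paper: reduce to finitely supported coefficient sequences, exploit the factorization of the exponentials, apply the one-dimensional Riesz inequality in one coordinate with the remaining variables held fixed, integrate via Fubini/Tonelli, and then iterate (the paper writes out $d=2$ explicitly rather than phrasing it as an induction, but the mechanism is identical); completeness is likewise handled via the tensor product of complete one-dimensional systems.
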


A proof of Theorem~\ref{thm:tensor-product-Kadec-Avdonin} is given in Appendix~\ref{sec:proof-thm:tensor-product-Kadec-Avdonin}.

The frequency set $\Gamma^{(1)} {\times} \ldots {\times} \Gamma^{(d)}$ in the statement of Theorem \ref{thm:tensor-product-Kadec-Avdonin} is a \emph{tensor product} which is often too restrictive.
In fact, one can also construct exponential Riesz bases for $L^2[0,1]^d$ by choosing a frequency set $\Gamma \subseteq \R^d$ which is a pointwise perturbation of $\Z^d$, meaning that $\Gamma$ is obtained by perturbing each element of $\Z^d$ independently.
Certainly, such frequency sets are much more flexible than tensor product sets.

\begin{theorem}[Corollary 6.1 in \cite{Ba10}; the case $d=1$ was proved by Duffin and Eachus \cite{DE42}, see also Remarks on p.\,37 in \cite{Yo01}]
\label{thm:Bailey-Cor6-1}
For each $\bn = (n^{(1)}, \ldots, n^{(d)}) \in \Z^d$, let $\bgamma_{\bn} = (\gamma_{\bn}^{(1)}, \ldots, \gamma_{\bn}^{(d)}) \in \R^d$ be a perturbation of $\bn$ such that
\begin{equation}\label{eqn:Bailey-Cor6-1-condition-ln2-over-pid}
L := \sup_{\bn \in \Z^d} \| \bgamma_{\bn} - \bn \|_{\infty}
= \sup_{\bn \in \Z^d} \max_{1 \leq k \leq d} | \gamma_{\bn}^{(k)} - n^{(k)} | \;<\; \tfrac{\ln(2)}{\pi d} .
\end{equation}
Then $\mathcal{E}( \{ \bgamma_{\bn} : \bn \in \Z^d \} )$ is a Riesz basis for $L^2[0,1]^d$.
\end{theorem}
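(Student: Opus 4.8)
The plan is to exhibit $\mathcal{E}(\{\bgamma_{\bn}\})$ as a controlled perturbation of the orthonormal basis $\mathcal{E}(\Z^d)$ and then invoke the classical Paley--Wiener stability theorem (see \cite{Yo01}): if $\{e_{\bn}\}$ is an orthonormal basis for a Hilbert space and $\{f_{\bn}\}$ satisfies $\big\|\sum_{\bn} c_{\bn}(e_{\bn}-f_{\bn})\big\| \le \lambda\,\big(\sum_{\bn}|c_{\bn}|^2\big)^{1/2}$ for all finitely supported $\{c_{\bn}\}$ with some $\lambda<1$, then $\{f_{\bn}\}$ is a Riesz basis. Before starting the estimate, I would replace the cube $[0,1]^d$ by the centered cube $[-\tfrac12,\tfrac12]^d$: the translation $\by\mapsto \by+(\tfrac12,\dots,\tfrac12)$ induces a unitary map $L^2[-\tfrac12,\tfrac12]^d\to L^2[0,1]^d$ sending $e^{2\pi i\bgamma\cdot(\cdot)}$ to a unimodular constant times $e^{2\pi i\bgamma\cdot(\cdot)}$, so the Riesz basis property is unaffected. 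The point of centering is that $|x^{(j)}|\le\tfrac12$ on this cube, which is precisely what yields the sharp constant.

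Writing $\boldsymbol{\delta}_{\bn}=\bgamma_{\bn}-\bn$, $e_{\bn}(\bx)=e^{2\pi i\bn\cdot\bx}$ and $f_{\bn}(\bx)=e^{2\pi i\bgamma_{\bn}\cdot\bx}$, the key step is to bound $\big\|\sum_{\bn}c_{\bn}(e_{\bn}-f_{\bn})\big\|_{L^2[-1/2,1/2]^d}$. I would factor out $e_{\bn}$ and expand in a Taylor series,
\[
e_{\bn}(\bx)-f_{\bn}(\bx)=e^{2\pi i\bn\cdot\bx}\bigl(1-e^{2\pi i\boldsymbol{\delta}_{\bn}\cdot\bx}\bigr)=-\,e^{2\pi i\bn\cdot\bx}\sum_{k=1}^{\infty}\frac{(2\pi i\,\boldsymbol{\delta}_{\bn}\cdot\bx)^{k}}{k!},
\]
and then apply the multinomial theorem $(\boldsymbol{\delta}_{\bn}\cdot\bx)^k=\sum_{|\alpha|=k}\binom{k}{\alpha}\boldsymbol{\delta}_{\bn}^{\alpha}\bx^{\alpha}$, rewriting the $k$-th term as $\sum_{|\alpha|=k}\binom{k}{\alpha}\bx^{\alpha}\sum_{\bn}c_{\bn}\boldsymbol{\delta}_{\bn}^{\alpha}e_{\bn}(\bx)$. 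For each fixed multi-index $\alpha$ I would pull out $\|\bx^{\alpha}\|_{L^\infty}=(\tfrac12)^{|\alpha|}$ and use orthonormality of $\{e_{\bn}\}$ together with $|\boldsymbol{\delta}_{\bn}^{\alpha}|\le L^{|\alpha|}$ to obtain $\big\|\sum_{\bn}c_{\bn}\boldsymbol{\delta}_{\bn}^{\alpha}e_{\bn}\big\|_{L^2}\le L^{k}\,\big(\sum_{\bn}|c_{\bn}|^2\big)^{1/2}$.

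Combining these estimates with the identity $\sum_{|\alpha|=k}\binom{k}{\alpha}=d^{k}$ bounds the $k$-th term by $\frac{(2\pi)^k}{k!}\,d^{k}(\tfrac12)^{k}L^{k}=\frac{(\pi d L)^k}{k!}$ times $\big(\sum_{\bn}|c_{\bn}|^2\big)^{1/2}$, and summing over $k\ge1$ gives
\[
\Bigl\|\sum_{\bn}c_{\bn}(e_{\bn}-f_{\bn})\Bigr\|\le\bigl(e^{\pi d L}-1\bigr)\Bigl(\sum_{\bn}|c_{\bn}|^2\Bigr)^{1/2}.
\]
Since $\{e_{\bn}\}$ is orthonormal, the right-hand side equals $(e^{\pi dL}-1)\big\|\sum_{\bn}c_{\bn}e_{\bn}\big\|$, and the hypothesis $L<\frac{\ln 2}{\pi d}$ is exactly what forces $\lambda:=e^{\pi dL}-1<1$; the Paley--Wiener theorem then yields the claim. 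I expect the main obstacle to be extracting the \emph{sharp} constant rather than merely some constant: running the identical expansion on $[0,1]^d$ replaces $(\tfrac12)^k$ by $1^k$ and only delivers $L<\frac{\ln 2}{2\pi d}$, so the entire improvement rests on centering the cube. A secondary point needing care is the interchange of the Taylor sum over $k$ with the $L^2$-norm, but since it suffices to treat finitely supported $\{c_{\bn}\}$ (whence every $\bn$-sum is finite) and $\sum_k\frac{(\pi dL)^k}{k!}$ converges, the triangle inequality applies term by term without difficulty.
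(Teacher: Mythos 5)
Your proof is correct: centering the cube at the origin, expanding $1-e^{2\pi i\boldsymbol{\delta}_{\bn}\cdot\bx}$ via the Taylor and multinomial theorems, bounding the $k$-th term by $\tfrac{(2\pi)^k}{k!}\,d^k(\tfrac12)^k L^k=\tfrac{(\pi dL)^k}{k!}$ through Parseval, and invoking the Paley--Wiener stability criterion with $\lambda=e^{\pi dL}-1<1$ yields exactly the constant $\tfrac{\ln(2)}{\pi d}$, and the one delicate point (interchanging the $k$-sum with the norm) is handled correctly by restricting to finitely supported coefficients. The paper itself gives no proof of this theorem---it is cited as Corollary 6.1 in \cite{Ba10}, with the case $d=1$ due to Duffin and Eachus \cite{DE42}---and your argument is precisely that classical route extended to $d$ dimensions, including the observation that the sharp constant comes from working on $[-\tfrac12,\tfrac12]^d$ rather than $[0,1]^d$.
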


It is worth noting that in dimension $d=1$, the constant $\frac{\ln(2)}{\pi} \; (\approx 0.2206)$ appearing in \eqref{eqn:Bailey-Cor6-1-condition-ln2-over-pid} is smaller than $\frac{1}{4}$ and therefore Theorem \ref{thm:Bailey-Cor6-1} is weaker than Theorem \ref{thm:tensor-product-Kadec-Avdonin} in dimension $1$; in fact, it is known that $\frac{1}{4}$ is the optimal constant (see Remarks on p.\,37 in \cite{Yo01}, also see p.\,202 in \cite{He11}).
For higher dimensions $d \geq 2$, the constant $\frac{\ln(2)}{\pi d}$ in \eqref{eqn:Bailey-Cor6-1-condition-ln2-over-pid} is much smaller than $\frac{1}{4}$ but the frequency sets satisfying \eqref{eqn:Bailey-Cor6-1-condition-ln2-over-pid} have more flexible structure than tensor product sets; hence, both Theorems \ref{thm:tensor-product-Kadec-Avdonin} and \ref{thm:Bailey-Cor6-1} have pros and cons over the other.
See Figure \ref{fig:comparison-of-frequency-set-structure} for a comparison of the frequency sets apprearing in Theorems \ref{thm:tensor-product-Kadec-Avdonin} and \ref{thm:Bailey-Cor6-1}.

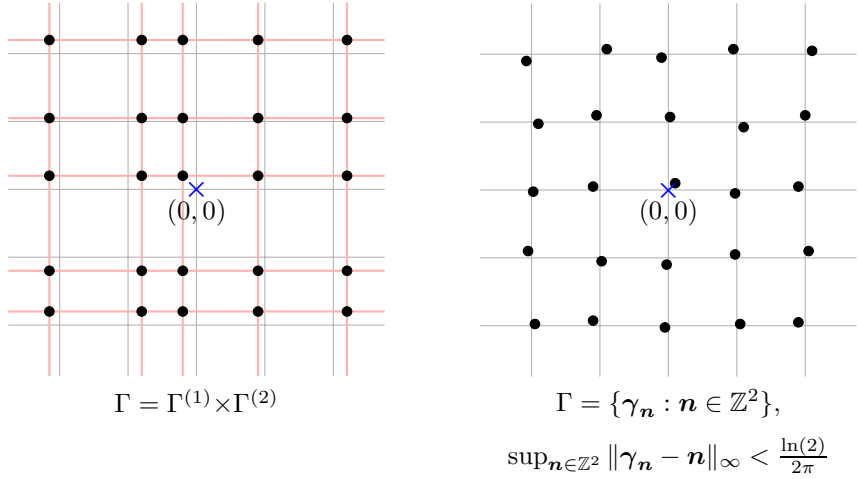
\begin{figure}[h!]
\begin{center}
\begin{tikzpicture}[scale=0.9]
    \draw[black!30] (-2.75,-2.75) grid (2.75,2.75);
	\draw[thick,red!30] ({-2 - 0.15},-2.75) -- ({-2 - 0.15},2.75);
	\draw[thick,red!30] ({-1 + 0.2},-2.75) -- ({-1 + 0.2},2.75);    	
	\draw[thick,red!30] ({0 - 0.2},-2.75) -- ({0 - 0.2},2.75);
	\draw[thick,red!30] ({1 - 0.1},-2.75) -- ({1 - 0.1},2.75);
	\draw[thick,red!30] ({2 + 0.2},-2.75) -- ({2 + 0.2},2.75);

	\draw[thick,red!30] (-2.75, {2 + 0.2}) -- (2.75, {2 + 0.2});
	\draw[thick,red!30] (-2.75, {1 + 0.05}) -- (2.75, {1 + 0.05});       	
	\draw[thick,red!30] (-2.75, {0 + 0.2}) -- (2.75, {0 + 0.2});
	\draw[thick,red!30] (-2.75, {-1 - 0.2}) -- (2.75, {-1 - 0.2});
	\draw[thick,red!30] (-2.75, {-2 + 0.2}) -- (2.75, {-2 + 0.2});
				
        \draw ({-2 - 0.15},{2 + 0.2}) circle (0.7mm) [fill=black];
        \draw ({-1 + 0.2},{2 + 0.2}) circle (0.7mm) [fill=black];
        \draw ({0 - 0.2},{2 + 0.2}) circle (0.7mm) [fill=black];
        \draw ({1 - 0.1},{2 + 0.2}) circle (0.7mm) [fill=black];
        \draw ({2 + 0.2},{2 + 0.2}) circle (0.7mm) [fill=black];

        \draw ({-2 - 0.15},{1 + 0.05}) circle (0.7mm) [fill=black];
        \draw ({-1 + 0.2},{1 + 0.05}) circle (0.7mm) [fill=black];
        \draw ({0 - 0.2},{1 + 0.05}) circle (0.7mm) [fill=black];
        \draw ({1 - 0.1},{1 + 0.05}) circle (0.7mm) [fill=black];
        \draw ({2 + 0.2},{1 + 0.05}) circle (0.7mm) [fill=black];

        \draw ({-2 - 0.15},{0 + 0.2}) circle (0.7mm) [fill=black];
        \draw ({-1 + 0.2},{0 + 0.2}) circle (0.7mm) [fill=black];
        \draw ({0 - 0.2},{0 + 0.2}) circle (0.7mm) [fill=black];
        \draw ({1 - 0.1},{0 + 0.2}) circle (0.7mm) [fill=black];
        \draw ({2 + 0.2},{0 + 0.2}) circle (0.7mm) [fill=black];

        \draw ({-2 - 0.15},{-1 - 0.2}) circle (0.7mm) [fill=black];
        \draw ({-1 + 0.2},{-1 - 0.2}) circle (0.7mm) [fill=black];
        \draw ({0 - 0.2},{-1 - 0.2}) circle (0.7mm) [fill=black];
        \draw ({1 - 0.1},{-1 - 0.2}) circle (0.7mm) [fill=black];
        \draw ({2 + 0.2},{-1 - 0.2}) circle (0.7mm) [fill=black];

        \draw ({-2 - 0.15},{-2 + 0.2}) circle (0.7mm) [fill=black];
        \draw ({-1 + 0.2},{-2 + 0.2}) circle (0.7mm) [fill=black];
        \draw ({0 - 0.2},{-2 + 0.2}) circle (0.7mm) [fill=black];
        \draw ({1 - 0.1},{-2 + 0.2}) circle (0.7mm) [fill=black];
        \draw ({2 + 0.2},{-2 + 0.2}) circle (0.7mm) [fill=black];
\node[below] at (0,0) {$(0,0)$};
\node[blue] at (0,0) {$\boldsymbol{\times}$};
\node[below] at (0,-2.8) {$\Gamma = \Gamma^{(1)} {\times} \Gamma^{(2)}$};
\node[below] at (0,-4.04) {$ $};
\end{tikzpicture}
\hspace{10mm}
\begin{tikzpicture}[scale=0.9]
    \draw[black!30] (-2.75,-2.75) grid (2.75,2.75);
				
        \draw ({-2 - 0.15/2},{2 - 0.2/2}) circle (0.7mm) [fill=black];
        \draw ({-1 + 0.2/2},{2 + 0.15/2}) circle (0.7mm) [fill=black];
        \draw ({0 - 0.2/2},{2 - 0.1/2}) circle (0.7mm) [fill=black];
        \draw ({1 - 0.1/2},{2 + 0.15/2}) circle (0.7mm) [fill=black];
        \draw ({2 + 0.2/2},{2 + 0.1/2}) circle (0.7mm) [fill=black];

        \draw ({-2 + 0.2/2},{1 - 0.05/2}) circle (0.7mm) [fill=black];
        \draw ({-1 - 0.1/2},{1 + 0.2/2}) circle (0.7mm) [fill=black];
        \draw ({0 + 0.05/2},{1 + 0.15/2}) circle (0.7mm) [fill=black];
        \draw ({1 + 0.2/2},{1 - 0.15/2}) circle (0.7mm) [fill=black];
        \draw ({2 + 0/2},{1 + 0.2/2}) circle (0.7mm) [fill=black];

        \draw ({-2 + 0.05/2},{0 - 0.-0.05/2}) circle (0.7mm) [fill=black];
        \draw ({-1 - 0.2/2},{0 + 0.1/2}) circle (0.7mm) [fill=black];
        \draw ({0 + 0.2/2},{0 + 0.2/2}) circle (0.7mm) [fill=black];
        \draw ({1 - 0.05/2},{0 - 0.1/2}) circle (0.7mm) [fill=black];
        \draw ({2 - 0.2/2},{0 + 0.1/2}) circle (0.7mm) [fill=black];

        \draw ({-2 - 0.1/2},{-1 + 0.2/2}) circle (0.7mm) [fill=black];
        \draw ({-1 + 0.05/2},{-1 - 0.1/2}) circle (0.7mm) [fill=black];
        \draw ({0 - 0.05/2},{-1 - 0.2/2}) circle (0.7mm) [fill=black];
        \draw ({1 - 0.05/2},{-1 + 0.1/2}) circle (0.7mm) [fill=black];
        \draw ({2 + 0.1/2},{-1 + 0.2/2}) circle (0.7mm) [fill=black];

        \draw ({-2 + 0.1/2},{-2 + 0.05/2}) circle (0.7mm) [fill=black];
        \draw ({-1 - 0.2/2},{-2 + 0.15/2}) circle (0.7mm) [fill=black];
        \draw ({0 - 0.1/2},{-2 - 0.05/2}) circle (0.7mm) [fill=black];
        \draw ({1 + 0.1/2},{-2 + 0.05/2}) circle (0.7mm) [fill=black];
        \draw ({2 - 0.2/2},{-2 + 0.1/2}) circle (0.7mm) [fill=black];
\node[below] at (0,0) {$(0,0)$};
\node[blue] at (0,0) {$\boldsymbol{\times}$};
\node[below] at (0,-2.8) {$\Gamma = \{ \bgamma_{\bn} : \bn \in \Z^2 \}$, };
\node[below] at (0,-3.5) {$\sup_{\bn \in \Z^2} \| \bgamma_{\bn} - \bn \|_{\infty} < \frac{\ln(2)}{2 \pi }$ };
\end{tikzpicture}
\end{center}
\caption{Both of the frequency sets in Theorem \ref{thm:tensor-product-Kadec-Avdonin} (left) and Theorem \ref{thm:Bailey-Cor6-1} (right) are perturbations of $\Z^2$. The left set is a tensor product while the right set is obtained by independent perturbation of points in $\Z^2$. The size of perturbation allowed in each coordinate direction is less than $1/4 = 0.25$ for the left set, and is less than $\frac{\ln(2)}{2 \pi } \approx 0.1103$ for the right set.}
\label{fig:comparison-of-frequency-set-structure}
\end{figure}

\begin{remark}
\rm
In \cite[Theorem 2.1]{SZ99-ACHA} and \cite[Theorem 1.1]{SZ99-JMAA}, the authors claimed the stronger result that the constant $\frac{\ln(2)}{\pi d}$ in \eqref{eqn:Bailey-Cor6-1-condition-ln2-over-pid} can be replaced by $\frac{1}{4}$ for all dimensions $d \in \N$.
However, their proof actually shows that
$\mathcal{E}( \Gamma^{(1)} {\times} \ldots {\times} \Gamma^{(d)} )$ is a Riesz basis for $L^2[0,1]^d$ if each $\Gamma^{(k)} \subseteq \R$ satisfies the Kadec $\frac{1}{4}$-condition given by \eqref{eqn:Kadec-condition}.
Theorem \ref{thm:tensor-product-Kadec-Avdonin} is an improvement of their result
in the sense that each $\Gamma^{(k)}$ is allowed to satisfy either Kadec's $\frac{1}{4}$-condition \eqref{eqn:Kadec-condition} or Avdonin's ``$\frac{1}{4}$ in the mean'' condition \eqref{eqn:Avdonin-condition}.
\end{remark}

\begin{remark}
\rm 
The perturbation theorems introduced in this section, produce frequency sets that are perturbed away from a lattice, especially from the integer lattice $\Z^d$. 
In contrast, Theorems~\ref{thm:main2-RB-integer} and \ref{thm:limiting-spectral-norm} deal with frequency sets that are subsets of $\Z^d$.
\end{remark}

\subsection{Density of frequency sets and invariance of exponential Riesz bases}
\label{subsec:density-RBinvariance}

The lower and upper density of a discrete set $\Gamma \subseteq \R^d$ is defined by
\[
\begin{split}
& D^-(\Gamma) = \liminf_{r \rightarrow \infty} \frac{\inf_{\mathbf{z} \in \R^d} |\Gamma \cap (\mathbf{z} + [0,r]^d) |}{r^d} \; , \\
& D^+(\Gamma) = \limsup_{r \rightarrow \infty} \frac{\sup_{\mathbf{z} \in \R^d} |\Gamma \cap (\mathbf{z} + [0,r]^d) |}{r^d} \; , 
\end{split}
\]
respectively (see e.g., \cite{He07}). If $D^-(\Gamma) = D^+(\Gamma)$, then $\Gamma$ is said to have \emph{uniform} density $D(\Gamma) := D^-(\Gamma) = D^+(\Gamma)$. 

\begin{proposition}[Landau's density theorem \cite{La67}] 
\label{prop:Landau}
Let $\Gamma \subseteq \R^d$ be a discrete set and let $S \subseteq \R^d$ be a bounded set.
If $\mathcal{E}(\Gamma)$ is a Riesz basis for $L^2(S)$, then $\Gamma$ has uniform density and $D(\Gamma) = |S|$.
\end{proposition}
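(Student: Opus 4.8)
The plan is to transfer the statement to the Fourier side, where the Riesz basis condition becomes a combined sampling-and-interpolation property for a Paley--Wiener space, and then to run Landau's comparison argument between the counting function of $\Gamma$ and the trace of a space--frequency concentration operator.

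First I would reformulate the hypothesis. For $f \in L^2(S)$, extended by zero to $\R^d$, set $F(\xi) = \int_S f(x)\, e^{-2\pi i\,\xi\cdot x}\,dx$, so that the coefficients of $f$ against $\mathcal{E}(\Gamma)$ are exactly the samples $\langle f,\, e^{2\pi i\gamma\cdot(\cdot)}\rangle_{L^2(S)} = F(\gamma)$, and $F$ ranges isometrically over the Paley--Wiener space $PW_S := \{ F \in L^2(\R^d) : \supp \widehat{F} \subseteq S \}$ as $f$ ranges over $L^2(S)$. The Riesz basis condition \eqref{eqn:HilbertSpaceRieszCondition} then says that the sampling map $F \mapsto \{F(\gamma)\}_{\gamma\in\Gamma}$ is a bounded bijection of $PW_S$ onto $\ell^2(\Gamma)$ with bounded inverse; equivalently, $\Gamma$ is simultaneously a \emph{set of sampling} (the two-sided frame inequality holds for every $F \in PW_S$) and a \emph{set of interpolation} (every sequence in $\ell^2(\Gamma)$ is the sample sequence of some $F \in PW_S$) for $PW_S$.

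The core tool is the concentration operator. For a box $Q = \mathbf{z} + [0,r]^d$ let $P_S$ be the orthogonal projection of $L^2(\R^d)$ onto $PW_S$ (Fourier multiplication by $\chi_S$) and let $M_Q$ be multiplication by $\chi_Q$. The operator $T_Q := P_S M_Q P_S$ is positive, self-adjoint and trace class, with $\operatorname{tr}(T_Q) = |S|\cdot|Q| = |S|\,r^d$ and eigenvalues in $[0,1]$. I would then invoke the eigenvalue-clustering estimate: for every $\varepsilon \in (0,\tfrac12)$ the number of eigenvalues of $T_Q$ lying in $(\varepsilon, 1-\varepsilon)$ is $o(r^d)$ as $r \to \infty$, so that the number exceeding $1-\varepsilon$ equals $|S|\,r^d + o(r^d)$. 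With this in hand the two density bounds follow from a dimension-counting dichotomy. For the lower bound, the span of the eigenfunctions of $T_Q$ with eigenvalues near $1$ is a subspace of $PW_S$ of dimension $|S|r^d + o(r^d)$ whose elements are $L^2$-concentrated on $Q$; feeding these into the lower sampling inequality and absorbing the uniformly small tails outside a slightly enlarged box $Q' = \mathbf{z} + [-\rho, r+\rho]^d$ with $\rho = o(r)$ forces $|\Gamma \cap Q'| \ge |S|r^d + o(r^d)$, hence $D^-(\Gamma) \ge |S|$. For the upper bound I would use interpolation: the biorthogonal (dual Riesz) functions attached to the points $\gamma \in \Gamma \cap Q$ form $|\Gamma \cap Q|$ linearly independent elements of $PW_S$ that are approximately supported in $Q$, so they essentially lie in the eigenspace of dimension $|S|r^d + o(r^d)$; a linear-algebra comparison yields $|\Gamma\cap Q| \le |S|r^d + o(r^d)$ and thus $D^+(\Gamma) \le |S|$. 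Combining, $|S| \le D^-(\Gamma) \le D^+(\Gamma) \le |S|$, so $\Gamma$ has uniform density $D(\Gamma) = |S|$.

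I expect the main obstacle to be the eigenvalue-clustering estimate together with the associated boundary bookkeeping. Controlling the ``plunge region'' requires genuine analytic input, most cleanly via the trace comparison $\operatorname{tr}(T_Q - T_Q^2) = \operatorname{tr}(T_Q) - \operatorname{tr}(T_Q^2) = o(r^d)$, which I would obtain by estimating the integral kernel of $T_Q$ and isolating the contribution from a boundary layer of width $o(r)$. Converting the $L^2$-concentration of the near-$1$ eigenfunctions into honest control of their samples on a box of side $r + o(r)$, while keeping every error term $o(r^d)$ and uniform in the center $\mathbf{z}$, is the delicate step that makes the lower and upper inequalities meet exactly at $|S|$.
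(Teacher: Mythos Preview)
The paper does not give a proof of this proposition at all: it is stated as Landau's classical density theorem with a bare citation to \cite{La67}, and is used only as background in Section~\ref{subsec:density-RBinvariance}. So there is no ``paper's own proof'' to compare against.

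That said, your outline is a faithful sketch of Landau's original approach via the time--frequency concentration operator $T_Q = P_S M_Q P_S$, the trace identity $\operatorname{tr}(T_Q) = |S|\,|Q|$, and the plunge-region estimate $\operatorname{tr}(T_Q - T_Q^2) = o(r^d)$, followed by the dimension-counting comparison with the sampling and interpolation inequalities. The reformulation of the Riesz basis hypothesis as ``$\Gamma$ is a set of sampling and interpolation for $PW_S$'' is exactly what the paper itself records (in the discussion following Definition~\ref{def:set-uniqueness-interpolation}), so that step is uncontroversial. Your identification of the delicate point --- making the $o(r^d)$ errors uniform in the center $\mathbf{z}$ so that the $\liminf$/$\limsup$ over all translates collapse to the same limit --- is also accurate; this is precisely where the boundedness of $S$ (hence the decay and translation-invariance of the reproducing kernel of $PW_S$) enters. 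Nothing in your plan is wrong, but since the paper treats the result as a black box, a full write-up would simply be reproducing Landau's argument rather than matching anything the authors do.
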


Let $\mathrm{GL} (d,\R)$ be the \emph{general linear group of degree $d$} which consists of $d {\times} d$ nonsingular real matrices. 

\begin{lemma}\label{lem:RB-basic-operations}
Assume that $\mathcal{E}(\Gamma)$ is a Riesz basis for $L^2(S)$ with bounds $0 < \alpha \leq \beta < \infty$, where $\Gamma \subseteq \R^d$ is a discrete set and $S \subseteq \R^d$ is a measurable set. \\
({\romannumeral 1}) For any $a \in \R^d$, the system $\mathcal{E}(\Gamma)$ is a Riesz basis for $L^2(S+a)$ with bounds $\alpha$ and $\beta$. \\
({\romannumeral 2}) For any $b \in \R^d$, the system $\mathcal{E}(\Gamma + b)$ is a Riesz basis for $L^2(S)$ with bounds $\alpha$ and $\beta$. \\
({\romannumeral 3}) For any $A \in \mathrm{GL} (d,\R)$, the system $\mathcal{E}(A \Gamma)$ is a Riesz basis for $L^2(A^{-T} S)$ with bounds $\frac{\alpha}{| \! \det (A) |} $ and $\frac{\beta}{| \! \det (A) |} $.
In particular, for any $c > 0$, the system $\mathcal{E}(c \Gamma)$ is a Riesz basis for $L^2(\frac{1}{c} S)$ with bounds $c^{-d} \alpha$ and $c^{-d} \beta$. 
\end{lemma}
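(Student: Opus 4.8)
The plan is to handle all three parts through one transfer principle, verifying directly the two defining conditions of a Riesz basis in Definition~\ref{def:RieszBases}. Suppose $V \colon L^2(S) \to \mathcal{H}$ is a bounded bijection onto the target space that is a positive scalar multiple of a unitary, say $\|V g\|_{\mathcal{H}} = \kappa \, \|g\|_{L^2(S)}$ for all $g$ with some $\kappa > 0$. Applying this identity to $g = \sum_n c_n f_n$ with $f_n \in \mathcal{E}(\Gamma)$ shows that the inequality \eqref{eqn:HilbertSpaceRieszCondition} transfers from $\mathcal{E}(\Gamma)$ to $\{V f_n\}$ with the bounds multiplied by $\kappa^2$; completeness of $\{V f_n\}$ in $\mathcal{H}$ follows from completeness of $\mathcal{E}(\Gamma)$ in $L^2(S)$ together with the boundedness and surjectivity of $V$, since then $\overline{\spanset}\,\{V f_n\} \supseteq V\big(\overline{\spanset}\,\{f_n\}\big) = V(L^2(S)) = \mathcal{H}$. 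I will also use the elementary fact that multiplying each element $f_n$ of a Riesz basis by a unimodular scalar $\lambda_n$ (with $|\lambda_n| = 1$) leaves both the span and the quantities $|c_n \lambda_n| = |c_n|$ unchanged, hence yields a Riesz basis with identical bounds.

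For parts~(\romannumeral 1) and (\romannumeral 2) the operator $V$ will be unitary, so $\kappa = 1$ and the bounds are preserved. In (\romannumeral 1) I take the translation $V \colon L^2(S) \to L^2(S+a)$, $(Vf)(x) = f(x-a)$, which is unitary by translation invariance of Lebesgue measure; it sends $e^{2\pi i \gamma \cdot (\cdot)}$ to $e^{-2\pi i \gamma \cdot a}\, e^{2\pi i \gamma \cdot (\cdot)}$, that is, to $\mathcal{E}(\Gamma)$ on $S+a$ up to the unimodular factors $e^{-2\pi i \gamma \cdot a}$, and the two facts above give the claim. In (\romannumeral 2) I take the modulation $V = M_b$ on $L^2(S)$, $(M_b f)(x) = e^{2\pi i b \cdot x} f(x)$, which is unitary since $|e^{2\pi i b \cdot x}| = 1$; it sends $e^{2\pi i \gamma \cdot (\cdot)}$ exactly to $e^{2\pi i (\gamma + b) \cdot (\cdot)}$, so $\mathcal{E}(\Gamma + b)$ is a Riesz basis for $L^2(S)$ with bounds $\alpha, \beta$.

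For part~(\romannumeral 3) I take the dilation $V \colon L^2(S) \to L^2(A^{-T} S)$, $(Vf)(x) = f(A^T x)$. The substitution $y = A^T x$ gives $\|Vf\|_{L^2(A^{-T} S)}^2 = |\!\det(A)|^{-1} \, \|f\|_{L^2(S)}^2$, so $V = |\!\det(A)|^{-1/2} \, U$ with $U$ unitary and $\kappa^2 = |\!\det(A)|^{-1}$. Using $(A\gamma) \cdot x = \gamma \cdot (A^T x)$, this operator sends $e^{2\pi i \gamma \cdot (\cdot)}$ to $e^{2\pi i (A\gamma) \cdot (\cdot)}$, so it maps $\mathcal{E}(\Gamma)$ onto $\mathcal{E}(A\Gamma)$; the transfer principle then yields the scaled bounds $\alpha / |\!\det(A)|$ and $\beta / |\!\det(A)|$. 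The stated special case is $A = c\,I_d$, for which $A^{-T} S = \tfrac{1}{c} S$, $A\Gamma = c\Gamma$ and $|\!\det(A)| = c^d$.

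The computations are routine; the only points demanding care, and hence the main (minor) obstacle, lie in part~(\romannumeral 3): getting the transpose right so that $\mathcal{E}(A\Gamma)$, rather than $\mathcal{E}(A^{-T}\Gamma)$, is the system that lands on the domain $A^{-T} S$, and tracking the Jacobian $|\!\det(A)|$ that produces the scaling of the bounds. Beyond this, one must confirm in each part that the intertwining operator is genuinely onto its target space, which is exactly what secures completeness, the one ingredient of the Riesz-basis definition not captured by the two-sided inequality \eqref{eqn:HilbertSpaceRieszCondition} alone.
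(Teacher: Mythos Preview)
Your proof is correct and follows essentially the same approach as the paper: translation for part~(\romannumeral 1), modulation for part~(\romannumeral 2), and the dilation $f \mapsto f(A^T \cdot)$ with the change of variables $y = A^T x$ for part~(\romannumeral 3). Your packaging of the argument via a single ``transfer principle'' (bounded bijection that is a scalar multiple of a unitary) is slightly more explicit about why completeness survives, but the underlying operators and computations are identical to the paper's.
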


A proof of Lemma~\ref{lem:RB-basic-operations} is given in Appendix~\ref{sec:proof-lem:RB-basic-operations}.

\subsection{Lattices and Fuglede's theorem}
\label{subsec:lattices-Fuglede}

A \emph{(full rank) lattice} in $\R^d$ is a discrete set given by $\Lambda = A \Z^d$ for some full rank matrix $A \in \R^{d \times d}$, i.e., $A \in \mathrm{GL} (d,\R)$.
The set $A [0,1]^d$ is the parallelepiped (parallelogram if $d=2$) with edges formed by the columns of $A$, and its volume (area if $d=2$) is given by $|\det A|$.
The \emph{dual lattice} of $\Lambda$ is defined by
\[
\begin{split}
\Lambda^*
:=\;& \{ \bgamma \in \R^d : e^{2 \pi i \blambda \cdot \bgamma} = 1 \;\; \text{for all} \; \blambda \in \Lambda \} \\
=\;& \{ \bgamma \in \R^d : \blambda \cdot \bgamma \in \Z \;\; \text{for all} \; \blambda \in \Lambda \} = A^{-T} \Z^d ,
\end{split}
\]
where $A^{-T} := (A^{-1})^T = (A^T)^{-1}$.
In particular, $(c_1 \Z \times \cdots \times c_d \Z)^* = \frac{1}{c_1} \Z \times \cdots \times \frac{1}{c_d} \Z$ for any $c_1, \ldots, c_d > 0$.
A set $S \subseteq \R^d$ is called a \emph{fundamental domain} of $\Lambda$ if for each $\bx\in\R^d$ the set $S$ contains exactly one point from the set $\bx+\Lambda$, equivalently, if 
the sets $S+\blambda$, $\blambda\in\Lambda$, form a partition of $\R^d$.
In particular, the set $A[0,1)^d$ is a fundamental domain of the lattice $A\Z^d$. 

\begin{proposition}[Fuglede's theorem for lattices \cite{Fu74,Io07}]
\label{prop:Fuglede-for-lattices}
Let $\Lambda = A \Z^d$ be a lattice with $A \in \mathrm{GL} (d,\R)$.
Then a set $S \subseteq \R^d$ is a fundamental domain of $\Lambda$ if and only if $\mathcal{E}(\Lambda^*)$ is an orthogonal basis for $L^2(S)$.
In particular, $\mathcal{E}(A^{-T} \Z^d)$ is an orthogonal basis for $L^2(A [0,1)^d)$.
\end{proposition}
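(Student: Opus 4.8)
The plan is to reduce the assertion to the case of the standard lattice $\Z^d$ by a linear change of variables, and then to settle that base case by elementary Fourier analysis on $[0,1)^d$. Since $\Lambda^* = A^{-T}\Z^d$, applying Lemma~\ref{lem:RB-basic-operations}\,({\romannumeral 3}) with the matrix $A^{-T}$ to $\mathcal{E}(\Z^d)$ shows, using $(A^{-T})^{-T} = A$, that $\mathcal{E}(\Lambda^*)$ is a Riesz basis for $L^2(S)$ iff $\mathcal{E}(\Z^d)$ is a Riesz basis for $L^2(A^{-1}S)$. Moreover the substitution $\bx = Ay$ gives $\langle e_{A^{-T}\boldsymbol{m}}, e_{A^{-T}\boldsymbol{n}}\rangle_{L^2(S)} = |\det A|\,\langle e_{\boldsymbol m}, e_{\boldsymbol n}\rangle_{L^2(A^{-1}S)}$, so all inner products scale by the common positive factor $|\det A|$; hence orthogonality is preserved in both directions and $\mathcal{E}(\Lambda^*)$ is an \emph{orthogonal} basis for $L^2(S)$ iff $\mathcal{E}(\Z^d)$ is an orthogonal basis for $L^2(S_0)$, where $S_0 := A^{-1}S$. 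On the geometric side, $\{S+\blambda : \blambda\in\Lambda\}$ partitions $\R^d$ iff its image $\{S_0+\bn : \bn\in\Z^d\}$ under $A^{-1}$ partitions $\R^d$, so $S$ is a fundamental domain of $\Lambda$ iff $S_0$ is a fundamental domain of $\Z^d$. Both conditions are thus transported to $S_0$, and it suffices to prove: $S_0$ is a fundamental domain of $\Z^d$ iff $\mathcal{E}(\Z^d)$ is an orthogonal basis for $L^2(S_0)$.

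For the base case I would argue by periodization. As $S_0$ has finite measure, $\chi_{S_0}\in L^1(\R^d)$ and its periodization $g(\bx) := \sum_{\bn\in\Z^d}\chi_{S_0}(\bx+\bn)$ is a nonnegative integer-valued function in $L^1([0,1)^d)$ with Fourier coefficients $\widehat{g}(\boldsymbol k) = \int_{S_0} e^{-2\pi i \boldsymbol k\cdot \bx}\,d\bx$ for $\boldsymbol k\in\Z^d$. Since $\langle e_{\boldsymbol m}, e_{\boldsymbol n}\rangle_{L^2(S_0)} = \int_{S_0} e^{2\pi i (\boldsymbol m-\boldsymbol n)\cdot \bx}\,d\bx$ depends only on $\boldsymbol m-\boldsymbol n$, the system $\mathcal{E}(\Z^d)$ is pairwise orthogonal on $S_0$ precisely when $\widehat{g}(\boldsymbol k)=0$ for all $\boldsymbol k\neq\boldsymbol 0$, i.e.\ when $g$ equals a.e.\ the constant $\widehat{g}(\boldsymbol 0)=|S_0|$; being integer-valued, this describes an $|S_0|$-fold tiling. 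If in addition $\mathcal{E}(\Z^d)$ is complete, Landau's density theorem (Proposition~\ref{prop:Landau}) forces $|S_0| = D(\Z^d) = 1$, so $g\equiv 1$ a.e., which is exactly the identity characterizing a fundamental domain; this proves the forward implication.

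For the converse I would assume $S_0$ tiles and introduce the folding map $(Pf)(\bx) = \sum_{\bn\in\Z^d} f(\bx+\bn)$, $\bx\in[0,1)^d$, for $f\in L^2(S_0)$ extended by zero. Because the tiling hypothesis makes $\bx\mapsto \bx \bmod \Z^d$ an a.e.\ measure-preserving bijection from $S_0$ onto $[0,1)^d$, the operator $P$ is unitary onto $L^2([0,1)^d)$; and since $\boldsymbol m\cdot\bn\in\Z$, it carries each restricted exponential $e^{2\pi i\boldsymbol m\cdot(\cdot)}\big|_{S_0}$ to the standard exponential $e^{2\pi i\boldsymbol m\cdot(\cdot)}$ on $[0,1)^d$. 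As $\mathcal{E}(\Z^d)$ is an orthogonal basis for $L^2([0,1)^d)$ and $P$ is unitary, the restricted system $\mathcal{E}(\Z^d)$ is an orthogonal basis for $L^2(S_0)$, completing the equivalence. The ``in particular'' statement then follows by taking $S=A[0,1)^d$, which is a fundamental domain of $A\Z^d$ as already noted.

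The periodization computations are routine; the step demanding the most care is the completeness half of the base case, namely verifying that the folding map $P$ is a \emph{well-defined} unitary from $L^2(S_0)$ onto $L^2([0,1)^d)$. This rests on the measure-theoretic fact that the tiling hypothesis turns $\bx\mapsto\bx\bmod\Z^d$ into an a.e.\ measure-preserving bijection; once this is in hand, transporting the standard Fourier basis through $P$ yields completeness (and orthonormality after scaling) for free, and is the crux of the argument.
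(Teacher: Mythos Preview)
The paper does not give a proof of Proposition~\ref{prop:Fuglede-for-lattices}; it is quoted as a known result with citations to \cite{Fu74,Io07}. Your argument is essentially correct and follows the standard route: reduce to $\Lambda=\Z^d$ via the linear change of variables underlying Lemma~\ref{lem:RB-basic-operations}\,({\romannumeral 3}), then use periodization of $\chi_{S_0}$ for the forward implication and a folding (unitary equivalence) argument for the converse. The ``in particular'' clause follows exactly as you say.

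One small caveat: in the forward direction you invoke Proposition~\ref{prop:Landau} to conclude $|S_0|=1$, but that proposition is stated in the paper only for \emph{bounded} sets, whereas neither $S$ nor $S_0$ is assumed bounded here (fundamental domains need not be bounded). The conclusion is still correct, and you can avoid Landau entirely: once you know $g\equiv c$ a.e.\ with $c=|S_0|\in\Z_{\ge 0}$, rule out $c\ge 2$ directly by producing a nonzero function orthogonal to all of $\mathcal{E}(\Z^d)$---for instance, decompose $S_0$ measurably into two disjoint $\Z^d$-fundamental domains $S_1,S_2$ and take $\chi_{S_1}-\chi_{S_2}$, whose inner product with each $e^{2\pi i\bn\cdot(\cdot)}$ vanishes. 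Alternatively, Landau's density result does hold for sets of finite measure; it is only the paper's formulation that is conservative.
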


The last part of Proposition~\ref{prop:Fuglede-for-lattices} is a generalization of the fact that $\mathcal{E}(\Z^d)$ is an orthogonal basis for $L^2[0,1)^d$; see also Lemma~\ref{lem:RB-basic-operations} ({\romannumeral 3}). 

Finally, note that the spaces $L^2(S_1)$ and $L^2(S_2)$ are identical if $(S_1 \backslash S_2) \cup (S_2 \backslash S_1)$ is a measure zero set. In particular, we have $L^2[0,1)^d = L^2[0,1]^d$ and $L^2(A [0,1)^d) = L^2(A [0,1]^d)$, which allows us to use $[0,1]^d$ and $A [0,1]^d$ for notational convenience.

\subsection{The Fourier transform and Paley--Wiener spaces}

The \emph{$d$-dimensional Fourier transform} is defined as
\[
\mathcal{F}_d [f] (\boldsymbol \omega) = \widehat{f} (\boldsymbol \omega) = \int_{\R^d} f(\boldsymbol t) \, e^{- 2 \pi i \boldsymbol{t} \cdot \boldsymbol{\omega}} \, dt ,
\quad f \in L^1(\R^d) \cap L^2(\R^d) ,
\]
so that $\mathcal{F}_d [\cdot]$ extends to a unitary operator from
$L^{2}(\mathbb{R}^d)$ onto $L^{2}(\mathbb{R}^d)$.
The \emph{Paley--Wiener space} over a measurable set $S \subseteq \R^d$ is defined as
\[
PW(S) := \{ f \in L^2(\R^d) : \supp \widehat{f} \subseteq S \}
\;=\; \mathcal{F}_d ^{-1} \big[ L^2(S) \big]
\]
equipped with the norm $\| f \|_{PW(S)} := \| f \|_{L^2(\R^d)} = \| \widehat{f} \|_{L^2(S)}$.
If $S \subseteq \R^d$ has finite measure, then every $f \in PW(S)$ is continuous and
\begin{equation}\label{eqn:relation-between-PW-and-L2}
f(\boldsymbol x) = \int_{S} \widehat{f} (\boldsymbol \omega) \, e^{2 \pi i \boldsymbol x \cdot \boldsymbol \omega} \, d \boldsymbol \omega
= \big\langle \widehat{f} , e^{- 2 \pi i \boldsymbol x \cdot  (\cdot)} \big\rangle_{L^2(S)}
\quad \text{for all} \;\; \boldsymbol x \in \R^d .
\end{equation}

Let $\Gamma \subseteq \R^d$ be a discrete set and let $S \subseteq \R^d$ be a set of positive measure.
If $\mathcal{E}(\Gamma)$ is a Riesz sequence in $L^2(S)$, then $\Gamma$ is necessarily separated, that is, $\inf\{ |\gamma - \gamma'| : \gamma \neq \gamma' \in \Gamma \} > 0$ (see e.g., \cite[Proposition 11]{Le21}).
Conversely, if $\Gamma \subseteq \R^d$ is separated and $S \subseteq \R^d$ is bounded, then $\mathcal{E}(\Gamma)$ is a Bessel sequence in $L^2(S)$ (see \cite[p.\,135, Theorem 4]{Yo01}).

The theory of exponential bases is closely related with sampling theory. 
Let us introduce some relevant notions.

\begin{definition}
\label{def:set-uniqueness-interpolation}
Let $S \subseteq \R^d$ be a measurable set.
A discrete set $\Gamma \subseteq \R^d$ is called

\begin{itemize}
\item
\emph{a set of uniqueness} for $PW(S)$ if the only function $f \in  PW(S)$ satisfying $f(\gamma) = 0$ for all $\gamma \in \Gamma$ is the trivial function $f=0$;

\item
\emph{a set of sampling} for $PW(S)$
if there are constants $0 < \alpha \leq \beta < \infty$ such that
\[
\alpha \, \| f \|_{PW(S)}^2
\;\leq\;
\sum_{\gamma \in \Gamma} \big| f(\gamma) \big|^2
\;\leq\;
\beta \, \| f \|_{PW(S)}^2
\quad \text{for all} \;\; f \in PW(S) ;
\]

\item
\emph{a set of interpolation} for $PW(S)$ if for each $\{ c_ \gamma \}_{\gamma \in \Gamma} \in \ell_2(\Gamma)$ there exists a function $f \in  PW(S)$ satisfying $f(\gamma) = c_\gamma$ for all $\gamma \in \Gamma$.
\end{itemize}
\end{definition}

Using the relation \eqref{eqn:relation-between-PW-and-L2}, we can easily deduce the following.
\begin{enumerate}[(i)]
\item
$\Gamma$ is a set of uniqueness for $PW(S)$ if and only if $\mathcal{E}(-\Gamma) = \overline{\mathcal{E}(\Gamma)}$ is complete in $L^2(S)$ if and only if $\mathcal{E}(\Gamma)$ is complete in $L^2(S)$.

\item
$\Gamma$ is a set of sampling for $PW(S)$ if and only if $\mathcal{E}(-\Gamma) = \overline{\mathcal{E}(\Gamma)}$ is a frame for $L^2(S)$ if and only if $\mathcal{E}(\Gamma)$ is a frame for $L^2(S)$.
In general, a sequence $\{ f_n \}_{n\in\Z}$ in a separable Hilbert space $\mathcal{H}$ is called a \emph{frame} for $\mathcal{H}$ if there are constants $0 < \alpha \leq \beta < \infty$ such that
\[
\alpha \, \| f \|^2
\;\leq\;
\sum_{n\in\Z} |\langle f , f_n\rangle|^{2}
\;\leq\;
\beta \, \| f \|^2
\quad \text{for all} \;\; f \in \mathcal{H} .
\]

\item
If $\mathcal{E}(\Gamma)$ is a Bessel sequence in $L^2(S)$, then $\Gamma$ is a set of interpolation for $PW(S)$ if and only if $\mathcal{E}(-\Gamma) = \overline{\mathcal{E}(\Gamma)}$ is a Riesz sequence in $L^2(S)$ if and only if $\mathcal{E}(\Gamma)$ is a Riesz sequence in $L^2(S)$ (see \cite[Appendix A]{Le21} for more details).
\end{enumerate}

It is well-known that a sequence in a separable Hilbert space $\mathcal{H}$ is a Riesz basis if and only if it is both a frame and a Riesz sequence (see e.g., \cite[Proposition 3.7.3, Theorems 5.4.1 and 7.1.1]{Ch16}).
Hence, if $\Gamma \subseteq \R^d$ is a separated set and $S \subseteq \R^d$ is a bounded set, then
$\mathcal{E}(\Gamma)$ is a Riesz basis for $L^2(S)$ if and only if $\Gamma$ is a set of uniqueness and interpolation for $PW(S)$
if and only if $\Gamma$ is a set of sampling and interpolation for $PW(S)$.

\section{Proof of the equivalence of Theorems~\ref{thm:main2-RB-general} and \ref{thm:main2-RB-integer}}
\label{sec:proof-thms-equivalence}

Theorem~\ref{thm:main2-RB-general} $\Rightarrow$ Theorem~\ref{thm:main2-RB-integer}: This is obvious by setting $B = I_d$ in Theorem~\ref{thm:main2-RB-general}.

\noindent
Theorem~\ref{thm:main2-RB-integer} $\Rightarrow$ Theorem~\ref{thm:main2-RB-general}: 
Assume that $A [0,1]^d = B^{-T} R^{-1} H [0,1]^d$ where $R \in \Z^{d \times d}$ is a nonsingular integer matrix and $H \in \R^{d \times d}$ is a lower triangular matrix with diagonals lying in $(0,1]$.  
Then $B^T A [0,1]^d = R^{-1} H [0,1]^d$, so Theorem~\ref{thm:main2-RB-integer} provides a set $\Gamma\subseteq \Z^d$ with $\mathcal{E}(\Gamma)$ being a Riesz basis for $L^2(B^T A [0,1]^d)$.
Using Lemma~\ref{lem:RB-basic-operations}, we deduce that $\widetilde{\Gamma} := B \Gamma \subseteq B\Z^d$ is a set with $\mathcal{E}(\widetilde{\Gamma})$ being a Riesz basis for $L^2(B^{-T} B^T A [0,1]^d) = L^2(A [0,1]^d)$, which is the conclusion of Theorem~\ref{thm:main2-RB-general}.
\hfill $\Box$ 

\section{Proof of Theorem~\ref{thm:main2-RB-integer}}
\label{sec:proof-thm-main2-RB}

Define the function $\mathrm{r} : \R \rightarrow \Z$ by $\mathrm{r}(x) = \lfloor x + \frac{1}{2} \rfloor$, where $\lfloor x \rfloor$ denotes the greatest integer less or equal to $x$. This is the customary rounding of $x$ to its nearest integer, namely the \emph{rounding half up}; for instance, $\mathrm{r}(-1.6) = -2$, $\mathrm{r}(-1.5) = -1$, $\mathrm{r}(-1.4) = -1$, $\mathrm{r}(1.4) = 1$, $\mathrm{r}(1.5) = 2$ and $\mathrm{r}(1.6) = 2$. 
We also define
\[
\mathbf{r} : \R^d \rightarrow \Z^d, \quad \mathbf{r} (x_1, \ldots, x_d) := \big( \mathrm{r}(x_1) , \ldots,  \mathrm{r}(x_d) \big) . 
\]
Note that $\mathbf{r} (x_1, \ldots, x_d)$ is the point on the integer lattice closest to $(x_1, \ldots, x_d)$.

%

Since $\mathcal{E}( A^{-T} \Z^d )$ is an orthogonal basis for $L^2(A[0,1]^d)$ whenever $A \in \R^{d \times d}$ is a nonsingular matrix, 
a natural candidate for $\Gamma \subseteq \Z^d$ in Theorem~\ref{thm:main2-RB-integer} is the set obtained by rounding each element of $A^{-T} \Z^d$ to its closest points in $\Z^d$, that is, the set $\mathbf{r} ( A^{-T} \Z^d )$. 
The following proposition shows that such a set indeed works if $A$ is a lower triangular matrix with diagonals lying in $(0,1]$.

\begin{proposition}\label{prop:main2-lower-triangular} 
Let $H \in \R^{d \times d}$ be a lower triangular matrix with diagonals lying in $(0,1]$. 
Then there exists a set $\mathcal{C}\subseteq \Z^d$
such that $\mathcal{E}(\mathcal{C})$ is a Riesz basis for $L^2(H [0,1]^d)$.
Moreover, the set $\mathcal{C}$ can be chosen to be 
\begin{enumerate}[(i)]
\item
the set $\mathbf{r} ( H^{-T} \Z^d )$ obtained by rounding each element of $H^{-T} \, \Z^d$ to its closest point in $\Z^d$, or 

\item
the rectangular lattice $\mathrm{r} \big(\frac{1}{a_{11}} \Z + \delta_1 \big) \times \cdots \times \mathrm{r} \big(\frac{1}{a_{dd}} \Z + \delta_d \big)$, where $a_{11}, \ldots, a_{dd}$ are the diagonals of $H$, and $\delta_1, \ldots, \delta_d \in \R$ are arbitrary constants. 
\end{enumerate} 
\end{proposition}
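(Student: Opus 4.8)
The plan is to organize both parts around the orthonormal reference lattice $H^{-T}\Z^d$: by Proposition~\ref{prop:Fuglede-for-lattices}, $\mathcal{E}(H^{-T}\Z^d)$ is an orthogonal basis for $L^2(H[0,1]^d)$, and the candidate set in (i) is obtained by rounding this lattice to $\Z^d$, while the set in (ii) is the rounding of the \emph{diagonal} dual lattice $D^{-1}\Z^d$ (with $D = \mathrm{diag}(a_{11},\dots,a_{dd})$) shifted coordinatewise. A first instinct is to apply the transform $\bgamma \mapsto H^{T}\bgamma$ from Lemma~\ref{lem:RB-basic-operations}({\romannumeral 3}), which turns (i) into the statement that $H^{T}\mathbf{r}(H^{-T}\Z^d)$ is a Riesz-basis frequency set for $L^2[0,1]^d$; writing $H^{T}\mathbf{r}(H^{-T}\bn)=\bn+H^{T}\boldeta_{\bn}$ with rounding errors $\|\boldeta_{\bn}\|_{\infty}\le\tfrac12$, this displays $\Z^d$ perturbed by $H^{T}\boldeta_{\bn}$. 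The obstruction is that the off-diagonal entries of $H$ amplify these errors, so no pointwise perturbation theorem (Proposition~\ref{prop:Kadec-1-4} or Theorem~\ref{thm:Bailey-Cor6-1}) applies once $H$ is far from diagonal — such a global estimate only recovers the small-norm regime of Theorem~\ref{thm:limiting-spectral-norm}. I would therefore not transform globally, but exploit the triangular structure by inducting on the dimension $d$, treating an arbitrary additive real shift inside the rounding as part of the inductive statement (this is exactly the $\delta$-freedom visible in part (ii)).

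For the base case $d=1$ we have $H=(a)$ with $a\in(0,1]$ and $\mathcal{C}=\mathrm{r}(\tfrac1a\Z+\delta)$. Rescaling by $a$ via Lemma~\ref{lem:RB-basic-operations}({\romannumeral 3}) reduces the claim that $\mathcal{E}(\mathcal{C})$ is a Riesz basis for $L^2[0,a]$ to the claim that $\{a\,\mathrm{r}(\tfrac na+\delta)\}_{n\in\Z}=\{n+a\delta+a e_n\}_{n\in\Z}$ is one for $L^2[0,1]$, where $e_n=\mathrm{r}(\tfrac na+\delta)-(\tfrac na+\delta)\in[-\tfrac12,\tfrac12)$. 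Since $a\le1$ the points $\tfrac na+\delta$ are spaced at least $1$ apart, so their roundings are distinct and $\mathcal{C}$ is separated; however the rescaled errors $a e_n$ can be as large as $\tfrac12$, so Kadec's Proposition~\ref{prop:Kadec-1-4} is insufficient and I would instead verify Avdonin's ``$\tfrac14$ in the mean'' condition \eqref{eqn:Avdonin-condition}. The key point is that the sawtooth $e_n$ has vanishing block averages — by equidistribution when $\tfrac1a$ is irrational and by exact periodic cancellation when $\tfrac1a$ is rational — and both the separation and the counting/averaging needed here are precisely what Proposition~\ref{lem:a-irrational-Beatty-Fraenkel} supplies. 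Invoking Proposition~\ref{prop:Avdonin-1-4-in-the-mean} then closes the base case.

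For the inductive step I would write $H$ in block lower-triangular form with leading $(d{-}1)\times(d{-}1)$ block $H'$ (lower triangular, diagonals in $(0,1]$), last diagonal entry $a_{dd}$, and coupling row below $H'$. Since $H^{-T}$ is upper triangular with top-left block $(H')^{-T}$, the last coordinate of $\mathbf{r}(H^{-T}\bn)$ is simply $\mathrm{r}(\tfrac{1}{a_{dd}}n_d)$, a clean one-dimensional rounded lattice, while for each fixed $n_d$ the first $d{-}1$ coordinates equal $\mathbf{r}\big((H')^{-T}\bn' + \bc(n_d)\big)$ for a shift vector $\bc(n_d)$ determined by $n_d$ and the coupling row. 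Thus each fiber is an instance of the inductive object in dimension $d{-}1$ with an arbitrary real shift, which is why the shift must be carried through the induction. Part (i) is then the $\bc\equiv 0$ reading of this fibration. For part (ii), the base case applied coordinatewise together with Theorem~\ref{thm:tensor-product-Kadec-Avdonin} (and a rescaling by $D$) shows directly that $\mathcal{E}(\mathcal{C})$ is a Riesz basis for the \emph{box} $L^2(D[0,1]^d)$; transferring from the box to the parallelepiped $H[0,1]^d$ amounts, via Lemma~\ref{lem:RB-basic-operations}({\romannumeral 3}) with the shear $H=LD$, to showing that the sheared rounded lattice $L^{T}\mathcal{C}$ is a Riesz basis for the box, and $L^{T}$ being upper unitriangular produces exactly the same fibered, shift-per-higher-coordinate structure.

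The step I expect to be the main obstacle is this fibered assembly. One cannot reduce the parallelepiped to the box by an integer-translation equidecomposition: the two domains are genuinely inequivalent in that sense, and it is only the flexibility of Riesz (as opposed to orthogonal) bases that allows a single frequency set to serve both. Because the fiber shifts $\bc(n_d)$ depend on the top coordinate, the system is \emph{not} a tensor product, so Theorem~\ref{thm:tensor-product-Kadec-Avdonin} does not apply verbatim; what is needed is a fibered strengthening of it, in which the fiberwise Riesz bounds obtained from the inductive hypothesis are \emph{uniform in the shift} and can be integrated against the Riesz basis in the top coordinate. Establishing that uniformity, and packaging it into a lower-bound (set of uniqueness/interpolation) estimate across infinitely many fibers, is where the real work lies; the separation and density of the global set, guaranteed at each stage by Proposition~\ref{lem:a-irrational-Beatty-Fraenkel} and consistent with the measure of $H[0,1]^d$, should then suffice to conclude.
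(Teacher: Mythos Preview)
Your plan is essentially the paper's approach: induction on $d$ exploiting the triangular block structure, a fiber-by-fiber completeness and interpolation argument in $PW(S)$ (exactly the ``set of uniqueness/interpolation'' packaging you anticipate as the real work), with the shift-uniform lower Riesz bound from Proposition~\ref{lem:a-irrational-Beatty-Fraenkel} as the ingredient that makes the fibered assembly converge. The only cosmetic differences are that the paper peels off the \emph{first} coordinate (base case $d=2$, where the fibered argument is already visible) rather than the last, and for part~(ii) it does not go through the shear $H=LD$ and a box-to-parallelepiped transfer but simply reruns the same fibered proof with every fiber set $\mathcal{X}_{\boldsymbol{j}}$ taken equal to $\mathrm{r}(\tfrac{1}{a}\Z)$ --- this is cleaner, since your shear route lands you back in the identical fibered situation anyway.
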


\medskip

We are now ready to prove Theorem~\ref{thm:main2-RB-integer}.

Assume that $A [0,1]^d = R^{-1} H [0,1]^d$ where $R \in \Z^{d \times d}$ is a nonsingular integer matrix and $H \in \R^{d \times d}$ is a lower triangular matrix with diagonals lying in $(0,1]$. 
By Proposition~\ref{prop:main2-lower-triangular}, there exists a set $\mathcal{C}\subseteq \Z^d$ such that $\mathcal{E}(\mathcal{C})$ is a Riesz basis for $L^2(H [0,1]^d)$. Then $\Gamma := R^T \mathcal{C}$ is a subset of $\Z^d$ such that $\mathcal{E}(\Gamma)=\mathcal{E}(R^T \mathcal{C} )$ is a Riesz basis for $L^2((R^T)^{-T} H [0,1]^d) = L^2(R^{-1} H [0,1]^d) = L^2(A[0,1]^d)$.
    \hfill $\Box$ 

\bigskip

The rest of this section is devoted to the proof of Proposition~\ref{prop:main2-lower-triangular}.

\subsection{Proof of Proposition~\ref{prop:main2-lower-triangular} for $d = 2$} 
\label{subsec:case-d2}

We first consider the case $d = 2$. Assume that  
\begin{equation}\label{eqn:Case1-parallel-y-axis-matrix-A}
H
= 
\begin{pmatrix}
a & 0 \\
c & s
\end{pmatrix}
\quad \text{with} \quad 0 < a,s \leq 1 \;\; \text{and} \;\; c \in \R .
\end{equation}
Then
\[
H^{-1}
=
\frac{1}{as}
\left(
\begin{array}{cc}
s & 0 \\
- c & a
\end{array}
\right)
=
\left(
\begin{array}{cc}
\frac{1}{a} & 0 \\[1mm]
- \frac{c}{as} & \frac{1}{s}
\end{array}
\right)
\quad \text{and}
\quad
H^{-T}
=
\begin{pmatrix} 
\frac{1}{a} & - \frac{c}{as} \\[1.5mm]
0 & \frac{1}{s}
\end{pmatrix} 
\]
so that
\[
\begin{split}
\Lambda &= H \Z^2 = \{ m (a,c) + n (0,s) : m, n \in \Z \} = \{ (m a, mc + ns) : m, n \in \Z \} ,  \\
\Lambda^* &= H^{-T} \Z^2  = \left\{ m (\tfrac{1}{a},0) + n (- \tfrac{c}{as}, \tfrac{1}{s}) : m, n \in \Z \right\}
=  \left\{  (\tfrac{ m s - nc}{as}, \tfrac{n}{s}) : m, n \in \Z \right\} .
\end{split}
\]
Note that $\Lambda^* \subseteq \R \times \frac{1}{s} \Z$ and that for each $y \in \frac{1}{s} \Z$ the set $\{ x : (x,y) \in \Lambda^* \}$ is a shifted copy of $\frac{1}{a} \Z$.
Since $a , s \leq 1$, there is no overlapping when rounding the set $\Lambda^*$ to the nearest points in $\Z^2$,
that is, the set $\mathcal{C} = \mathbf{r} (\Lambda^*)$ has no repeated elements. 
In fact, we may write
\begin{equation}\label{eqn:mathcalC-decomposition}
\mathcal{C} = \mathbf{r} (\Lambda^*)
= \bigcup_{j \in J} \, \mathcal{X}_j  \times  \{ j \} ,
\end{equation}
where $J := \mathrm{r} \big(\frac{1}{s} \Z\big)$ and $\mathcal{X}_j := \mathrm{r} \big(\frac{1}{a} \Z + \delta_j\big)$ for some $0 \leq \delta_j < \frac{1}{a} \Z$   for each $j \in J$.

\begin{example}\label{ex:1oversqrt3-case}
Let $A= 
\begin{pmatrix} 
\frac{1}{\sqrt{3}} & 0 \\[1.5mm] 
\frac{1}{\sqrt{5}} & \frac{1}{\sqrt{2}} 
\end{pmatrix}$.
By Theorem~\ref{thm:cubes-tiling-expRB}, 
there is no set $\Gamma \subseteq \Z^2$ such that $\mathcal{E}(\Gamma)$ is an orthogonal basis for $L^2( A [0,1]^2 )$.
However, Theorem~\ref{thm:main2-RB-integer} guarantees the existence of a set $\Gamma\subseteq \Z^2$ such that $\mathcal{E}(\Gamma)$ is a Riesz basis for $L^2(A [0,1]^2)$.
Indeed, we have $A [0,1]^d = R^{-1} H [0,1]^d$
with $R= \begin{pmatrix}
    1&0\\ 
    0&1
\end{pmatrix}$ 
and
$H
= 
\begin{pmatrix} 
\frac{1}{\sqrt{3}} & 0 \\[1.5mm] 
\frac{1}{\sqrt{5}} & \frac{1}{\sqrt{2}} 
\end{pmatrix}$.
Then
$H^{-T}
=
\begin{pmatrix}
\sqrt{3} & - \frac{\sqrt{6}}{\sqrt{5}}  \\[2mm] 
0 & \sqrt{2}
\end{pmatrix}$, 
so the corresponding lattices are $\Lambda = H \Z^2 = \{ m (\frac{1}{\sqrt{3}}, \frac{1}{\sqrt{5}}) + n (0,\frac{1}{\sqrt{2}}) : m, n \in \Z \}$ and $\Lambda^* = H^{-T} \Z^2  = \{ m ( \sqrt{3} , 0 ) + n ( - \tfrac{\sqrt{6}}{\sqrt{5}} , \sqrt{2} ) : m, n \in \Z \}$.
Note that $\Lambda^* \subseteq \R \times \sqrt{2} \Z$ and
that for each $y \in \sqrt{2} \Z$ the set $\{ x : (x,y) \in \Lambda^* \}$ is a shifted copy of $\sqrt{3} \Z$.
Then 
\[
\begin{split}
J &= \mathrm{r} \big( \sqrt{2} \Z \big) = \mathrm{r} \big( \{   \ldots, \; -4.24, \; -2.83, \; -1.41, \; 0, \; 1.41, \; 2.83, \; 4.24, \; \cdots \} \big) \\
&= \{   \cdots, \; -4, \; -3, \; -1, \; 0, \; 1, \; 3, \; 4, \; \cdots \}
\end{split}
\]
and
\[
\begin{split}
&\qquad \;\; \vdots \\
\mathcal{X}_{3} &= \mathrm{r} \big( \sqrt{3} \Z - 2 \tfrac{\sqrt{6}}{\sqrt{5}} \big)
 = \mathrm{r} \big( \{   \cdots, \; -3.92, \; -2.19 \;( \approx - 2 \tfrac{\sqrt{6}}{\sqrt{5}}) , \; -0.46, \; 1.27, \; 3.01, \; \cdots \} \big) \\
 &= \{   \cdots, \; -4, \; -2, \; 0, \; 1, \; 3, \; \cdots \},  \\
\mathcal{X}_{1} &= \mathrm{r} \big( \sqrt{3} \Z - \tfrac{\sqrt{6}}{\sqrt{5}} \big)
  = \mathrm{r} \big( \{   \cdots, \; -2.83, \; -1.10 \;( \approx - \tfrac{\sqrt{6}}{\sqrt{5}}) , \; 0.64, \; 2.37, \; 4.10, \; \cdots \} \big) \\
& = \{   \cdots, \; -3, \; -1, \; 1, \; 2, \; 4, \; \cdots \},  \\
\mathcal{X}_{0} &= \mathrm{r} \big( \sqrt{3} \Z  \big)
  = \mathrm{r} \big( \{   \cdots, \; -3.46, \; -1.73, \; 0, \; 1.73, \; 3.46, \; \cdots \} \big) \\
& = \{   \cdots, \; -3, \; -2, \; 0, \; 2, \; 3, \; \cdots \} , \\
\mathcal{X}_{-1} &= \mathrm{r} \big( \sqrt{3} \Z +  \tfrac{\sqrt{6}}{\sqrt{5}} \big)
 = \mathrm{r} \big( \{   \cdots, \; -4.10, \; -2.37, \; -0.64, \; 1.10 \;( \approx \tfrac{\sqrt{6}}{\sqrt{5}}), \; 2.83, \; \cdots \} \big) \\
& = \{   \cdots, \; -4, \; -2, \; -1, \; 1, \; 3, \; \cdots \} , \\
&\qquad \;\; \vdots \quad .\quad
\end{split}
\]
Consequently, we have
\[
\begin{split}
\mathcal{C}
&= \bigcup_{j \in J} \, \mathcal{X}_j  \times  \{ j \} \\
&= \cdots \;\cup\; \{  \cdots, \; -4, \; -2, \; 0, \; 1, \; 3, \; \cdots \} \times \{3\} \\
& \qquad\quad \;\cup\;  \{  \cdots, \; -3, \; -1, \; 1, \; 2, \; 4, \; \cdots \}   \times \{1\} \\
& \qquad\quad \;\cup\;  \{   \cdots, \; -3, \; -2, \; 0, \; 2, \; 3, \; \cdots \}   \times \{0\} \\
& \qquad\quad \;\cup\;  \{   \cdots, \; -4, \; -2, \; -1, \; 1, \; 3, \; \cdots \}  \times \{-1\} \;\cup\;  \cdots .
\end{split}
\] 
The lattices $\Lambda = H \mathbb{Z}^2$ and $\Lambda^* = H^{-T} \mathbb{Z}^2$ are depicted in Figure~\ref{fig:lattice-dual-lattice}, while the set $\mathcal{C}$ is illustrated in Figure~\ref{fig:mathcalC}.
\end{example}

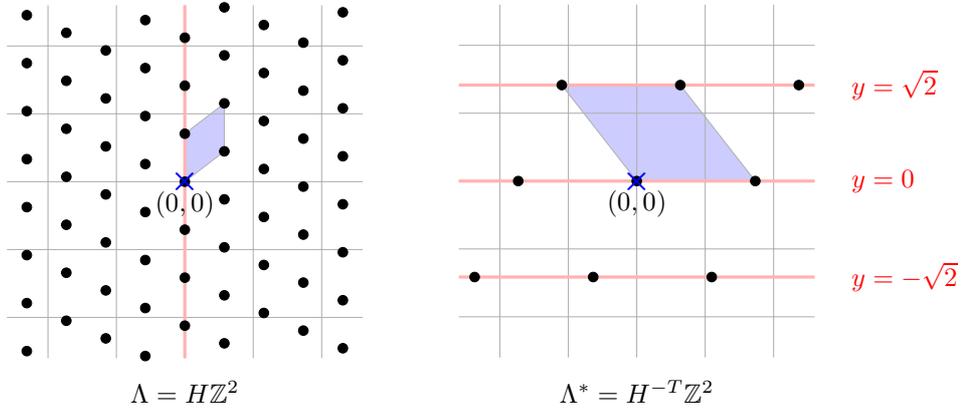
\begin{figure}[h!]
\begin{center}
\begin{tikzpicture}[scale=0.9]
\draw[black!30] (0,0) -- ({1/sqrt(3)}, {1/sqrt(5)}) -- ({1/sqrt(3)}, {1/sqrt(5) + 1/sqrt(2)}) -- (0, {1/sqrt(2)}) -- (0,0) [fill=blue!20];
    \draw[black!30] (-2.6,-2.6) grid (2.6,2.6);
	\draw[very thick,red!30] (0,-2.6) -- (0,2.6);

                \draw ({-4/sqrt(3)},{-4/sqrt(5) + -1/sqrt(2)}) circle (0.7mm) [fill=black];
                \draw ({-4/sqrt(3)},{-4/sqrt(5) + 0/sqrt(2)}) circle (0.7mm) [fill=black];
                \draw ({-4/sqrt(3)},{-4/sqrt(5) + 1/sqrt(2)}) circle (0.7mm) [fill=black];
                \draw ({-4/sqrt(3)},{-4/sqrt(5) + 2/sqrt(2)}) circle (0.7mm) [fill=black];
                \draw ({-4/sqrt(3)},{-4/sqrt(5) + 3/sqrt(2)}) circle (0.7mm) [fill=black];
                \draw ({-4/sqrt(3)},{-4/sqrt(5) + 4/sqrt(2)}) circle (0.7mm) [fill=black];
                \draw ({-4/sqrt(3)},{-4/sqrt(5) + 5/sqrt(2)}) circle (0.7mm) [fill=black];
                \draw ({-4/sqrt(3)},{-4/sqrt(5) + 6/sqrt(2)}) circle (0.7mm) [fill=black];

                \draw ({-3/sqrt(3)},{-3/sqrt(5) + -1/sqrt(2)}) circle (0.7mm) [fill=black];
                \draw ({-3/sqrt(3)},{-3/sqrt(5) + 0/sqrt(2)}) circle (0.7mm) [fill=black];
                \draw ({-3/sqrt(3)},{-3/sqrt(5) + 1/sqrt(2)}) circle (0.7mm) [fill=black];
                \draw ({-3/sqrt(3)},{-3/sqrt(5) + 2/sqrt(2)}) circle (0.7mm) [fill=black];
                \draw ({-3/sqrt(3)},{-3/sqrt(5) + 3/sqrt(2)}) circle (0.7mm) [fill=black];
                \draw ({-3/sqrt(3)},{-3/sqrt(5) + 4/sqrt(2)}) circle (0.7mm) [fill=black];
                \draw ({-3/sqrt(3)},{-3/sqrt(5) + 5/sqrt(2)}) circle (0.7mm) [fill=black];

                \draw ({-2/sqrt(3)},{-2/sqrt(5) + -2/sqrt(2)}) circle (0.7mm) [fill=black];
                \draw ({-2/sqrt(3)},{-2/sqrt(5) + -1/sqrt(2)}) circle (0.7mm) [fill=black];
                \draw ({-2/sqrt(3)},{-2/sqrt(5) + 0/sqrt(2)}) circle (0.7mm) [fill=black];
                \draw ({-2/sqrt(3)},{-2/sqrt(5) + 1/sqrt(2)}) circle (0.7mm) [fill=black];
                \draw ({-2/sqrt(3)},{-2/sqrt(5) + 2/sqrt(2)}) circle (0.7mm) [fill=black];
                \draw ({-2/sqrt(3)},{-2/sqrt(5) + 3/sqrt(2)}) circle (0.7mm) [fill=black];
                \draw ({-2/sqrt(3)},{-2/sqrt(5) + 4/sqrt(2)}) circle (0.7mm) [fill=black];

                \draw ({-1/sqrt(3)},{-1/sqrt(5) + -3/sqrt(2)}) circle (0.7mm) [fill=black];
                \draw ({-1/sqrt(3)},{-1/sqrt(5) + -2/sqrt(2)}) circle (0.7mm) [fill=black];
                \draw ({-1/sqrt(3)},{-1/sqrt(5) + -1/sqrt(2)}) circle (0.7mm) [fill=black];
                \draw ({-1/sqrt(3)},{-1/sqrt(5) + 0/sqrt(2)}) circle (0.7mm) [fill=black];
                \draw ({-1/sqrt(3)},{-1/sqrt(5) + 1/sqrt(2)}) circle (0.7mm) [fill=black];
                \draw ({-1/sqrt(3)},{-1/sqrt(5) + 2/sqrt(2)}) circle (0.7mm) [fill=black];
                \draw ({-1/sqrt(3)},{-1/sqrt(5) + 3/sqrt(2)}) circle (0.7mm) [fill=black];
                \draw ({-1/sqrt(3)},{-1/sqrt(5) + 4/sqrt(2)}) circle (0.7mm) [fill=black];

                \draw ({-0/sqrt(3)},{-0/sqrt(5) + -3/sqrt(2)}) circle (0.7mm) [fill=black];
                \draw ({-0/sqrt(3)},{-0/sqrt(5) + -2/sqrt(2)}) circle (0.7mm) [fill=black];
                \draw ({-0/sqrt(3)},{-0/sqrt(5) + -1/sqrt(2)}) circle (0.7mm) [fill=black];
                \draw ({-0/sqrt(3)},{-0/sqrt(5) + -0/sqrt(2)}) circle (0.7mm) [fill=black];
                \draw ({-0/sqrt(3)},{-0/sqrt(5) + 1/sqrt(2)}) circle (0.7mm) [fill=black];
                \draw ({-0/sqrt(3)},{-0/sqrt(5) + 2/sqrt(2)}) circle (0.7mm) [fill=black];
                \draw ({-0/sqrt(3)},{-0/sqrt(5) + 3/sqrt(2)}) circle (0.7mm) [fill=black];

                \draw ({1/sqrt(3)},{1/sqrt(5) + -4/sqrt(2)}) circle (0.7mm) [fill=black];
                \draw ({1/sqrt(3)},{1/sqrt(5) + -3/sqrt(2)}) circle (0.7mm) [fill=black];
                \draw ({1/sqrt(3)},{1/sqrt(5) + -2/sqrt(2)}) circle (0.7mm) [fill=black];
                \draw ({1/sqrt(3)},{1/sqrt(5) + -1/sqrt(2)}) circle (0.7mm) [fill=black];
                \draw ({1/sqrt(3)},{1/sqrt(5) + 0/sqrt(2)}) circle (0.7mm) [fill=black];
                \draw ({1/sqrt(3)},{1/sqrt(5) + 1/sqrt(2)}) circle (0.7mm) [fill=black];
                \draw ({1/sqrt(3)},{1/sqrt(5) + 2/sqrt(2)}) circle (0.7mm) [fill=black];
                \draw ({1/sqrt(3)},{1/sqrt(5) + 3/sqrt(2)}) circle (0.7mm) [fill=black];

                \draw ({2/sqrt(3)},{2/sqrt(5) + -4/sqrt(2)}) circle (0.7mm) [fill=black];
                \draw ({2/sqrt(3)},{2/sqrt(5) + -3/sqrt(2)}) circle (0.7mm) [fill=black];
                \draw ({2/sqrt(3)},{2/sqrt(5) + -2/sqrt(2)}) circle (0.7mm) [fill=black];
                \draw ({2/sqrt(3)},{2/sqrt(5) + -1/sqrt(2)}) circle (0.7mm) [fill=black];
                \draw ({2/sqrt(3)},{2/sqrt(5) + 0/sqrt(2)}) circle (0.7mm) [fill=black];
                \draw ({2/sqrt(3)},{2/sqrt(5) + 1/sqrt(2)}) circle (0.7mm) [fill=black];
                \draw ({2/sqrt(3)},{2/sqrt(5) + 2/sqrt(2)}) circle (0.7mm) [fill=black];

                \draw ({3/sqrt(3)},{3/sqrt(5) + -5/sqrt(2)}) circle (0.7mm) [fill=black];
                \draw ({3/sqrt(3)},{3/sqrt(5) + -4/sqrt(2)}) circle (0.7mm) [fill=black];
                \draw ({3/sqrt(3)},{3/sqrt(5) + -3/sqrt(2)}) circle (0.7mm) [fill=black];
                \draw ({3/sqrt(3)},{3/sqrt(5) + -2/sqrt(2)}) circle (0.7mm) [fill=black];
                \draw ({3/sqrt(3)},{3/sqrt(5) + -1/sqrt(2)}) circle (0.7mm) [fill=black];
                \draw ({3/sqrt(3)},{3/sqrt(5) + 0/sqrt(2)}) circle (0.7mm) [fill=black];
                \draw ({3/sqrt(3)},{3/sqrt(5) + 1/sqrt(2)}) circle (0.7mm) [fill=black];

                \draw ({4/sqrt(3)},{4/sqrt(5) + -6/sqrt(2)}) circle (0.7mm) [fill=black];
                \draw ({4/sqrt(3)},{4/sqrt(5) + -5/sqrt(2)}) circle (0.7mm) [fill=black];
                \draw ({4/sqrt(3)},{4/sqrt(5) + -4/sqrt(2)}) circle (0.7mm) [fill=black];
                \draw ({4/sqrt(3)},{4/sqrt(5) + -3/sqrt(2)}) circle (0.7mm) [fill=black];
                \draw ({4/sqrt(3)},{4/sqrt(5) + -2/sqrt(2)}) circle (0.7mm) [fill=black];
                \draw ({4/sqrt(3)},{4/sqrt(5) + -1/sqrt(2)}) circle (0.7mm) [fill=black];
                \draw ({4/sqrt(3)},{4/sqrt(5) + 0/sqrt(2)}) circle (0.7mm) [fill=black];
                \draw ({4/sqrt(3)},{4/sqrt(5) + 1/sqrt(2)}) circle (0.7mm) [fill=black];

\node[blue] at (0,0) {\large $\boldsymbol{\times}$};
\node[below] at (0,0) {$(0,0)$};
\node[below] at (0,-2.8) {$\Lambda = H \Z^2$};
\end{tikzpicture}
\hspace{10mm}
\begin{tikzpicture}[scale=0.9]
\draw[black!30] (0,0) -- ({1*sqrt(3)}, 0) -- ({1*sqrt(3) - 1*sqrt(6)/sqrt(5)}, {1*sqrt(2)}) -- ({ - 1*sqrt(6)/sqrt(5)}, {1*sqrt(2)}) -- (0,0) [fill=blue!20];
    \draw[black!30] (-2.6,-2.6) grid (2.6,2.6);
	\draw[very thick,red!30] (-2.6,{1*sqrt(2)}) -- (2.6,{1*sqrt(2)});
	\draw[very thick,red!30] (-2.6,0) -- (2.6,0);
	\draw[very thick,red!30] (-2.6,{-1*sqrt(2)}) -- (2.6,{-1*sqrt(2)});
                \draw ({0*sqrt(3) - 1*sqrt(6)/sqrt(5)}, {1*sqrt(2)}) circle (0.7mm) [fill=black];
                \draw ({1*sqrt(3) - 1*sqrt(6)/sqrt(5)}, {1*sqrt(2)}) circle (0.7mm) [fill=black];
                \draw ({2*sqrt(3) - 1*sqrt(6)/sqrt(5)}, {1*sqrt(2)}) circle (0.7mm) [fill=black];

                \draw ({-1*sqrt(3) - 0*sqrt(6)/sqrt(5)}, {0*sqrt(2)}) circle (0.7mm) [fill=black];
                \draw ({0*sqrt(3) - 0*sqrt(6)/sqrt(5)}, {0*sqrt(2)}) circle (0.7mm) [fill=black];
                \draw ({1*sqrt(3) - 0*sqrt(6)/sqrt(5)}, {0*sqrt(2)}) circle (0.7mm) [fill=black];

                \draw ({-2*sqrt(3) +1*sqrt(6)/sqrt(5)}, {-1*sqrt(2)}) circle (0.7mm) [fill=black];
                \draw ({-1*sqrt(3) +1*sqrt(6)/sqrt(5)}, {-1*sqrt(2)}) circle (0.7mm) [fill=black];
                \draw ({0*sqrt(3) +1*sqrt(6)/sqrt(5)}, {-1*sqrt(2)}) circle (0.7mm) [fill=black];

\node[blue] at (0,0) {\large $\boldsymbol{\times}$};
\node[below] at (0,0) {$(0,0)$};
\node[right] at (3,{1*sqrt(2)}) {$\red{y = \sqrt{2}}$};
\node[right] at (3,{0*sqrt(2)}) {$\red{y = 0}$};
\node[right] at (3,{-1*sqrt(2)}) {$\red{y = - \sqrt{2}}$};
\node[below] at (0,-2.8) {$\Lambda^* = H^{-T} \Z^2$};
\end{tikzpicture}
\end{center}
\caption{The lattice $\Lambda = H \Z^2$ and its dual lattice $\Lambda^* = H^{-T} \Z^d$}
\label{fig:lattice-dual-lattice}
\end{figure}

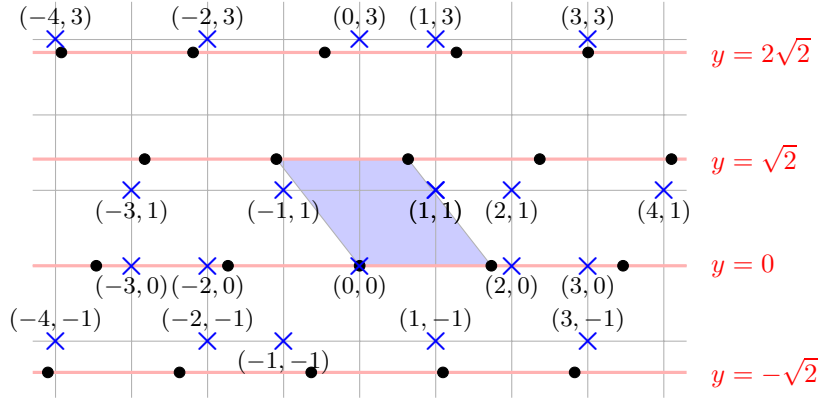
\begin{figure}[h!]
\centering
\begin{tikzpicture}[scale=1]
\draw[black!30] (0,0) -- ({1*sqrt(3)}, 0) -- ({1*sqrt(3) - 1*sqrt(6)/sqrt(5)}, {1*sqrt(2)}) -- ({ - 1*sqrt(6)/sqrt(5)}, {1*sqrt(2)}) -- (0,0) [fill=blue!20];
    \draw[black!30] (-4.3,-1.75) grid (4.3,3.5);
	\draw[very thick,red!30] (-4.3,{2*sqrt(2)}) -- (4.3,{2*sqrt(2)});
	\draw[very thick,red!30] (-4.3,{1*sqrt(2)}) -- (4.3,{1*sqrt(2)});
	\draw[very thick,red!30] (-4.3,0) -- (4.3,0);
	\draw[very thick,red!30] (-4.3,{-1*sqrt(2)}) -- (4.3,{-1*sqrt(2)});

                \draw ({-1*sqrt(3) - 2*sqrt(6)/sqrt(5)}, {2*sqrt(2)}) circle (0.7mm) [fill=black];
                \draw ({0*sqrt(3) - 2*sqrt(6)/sqrt(5)}, {2*sqrt(2)}) circle (0.7mm) [fill=black];
                \draw ({1*sqrt(3) - 2*sqrt(6)/sqrt(5)}, {2*sqrt(2)}) circle (0.7mm) [fill=black];
                \draw ({2*sqrt(3) - 2*sqrt(6)/sqrt(5)}, {2*sqrt(2)}) circle (0.7mm) [fill=black];
                \draw ({3*sqrt(3) - 2*sqrt(6)/sqrt(5)}, {2*sqrt(2)}) circle (0.7mm) [fill=black];

                \draw ({-1*sqrt(3) - 1*sqrt(6)/sqrt(5)}, {1*sqrt(2)}) circle (0.7mm) [fill=black];
                \draw ({0*sqrt(3) - 1*sqrt(6)/sqrt(5)}, {1*sqrt(2)}) circle (0.7mm) [fill=black];
                \draw ({1*sqrt(3) - 1*sqrt(6)/sqrt(5)}, {1*sqrt(2)}) circle (0.7mm) [fill=black];
                \draw ({2*sqrt(3) - 1*sqrt(6)/sqrt(5)}, {1*sqrt(2)}) circle (0.7mm) [fill=black];
                \draw ({3*sqrt(3) - 1*sqrt(6)/sqrt(5)}, {1*sqrt(2)}) circle (0.7mm) [fill=black];

                \draw ({-2*sqrt(3) - 0*sqrt(6)/sqrt(5)}, {0*sqrt(2)}) circle (0.7mm) [fill=black];
                \draw ({-1*sqrt(3) - 0*sqrt(6)/sqrt(5)}, {0*sqrt(2)}) circle (0.7mm) [fill=black];
                \draw ({0*sqrt(3) - 0*sqrt(6)/sqrt(5)}, {0*sqrt(2)}) circle (0.7mm) [fill=black];
                \draw ({1*sqrt(3) - 0*sqrt(6)/sqrt(5)}, {0*sqrt(2)}) circle (0.7mm) [fill=black];
                \draw ({2*sqrt(3) - 0*sqrt(6)/sqrt(5)}, {0*sqrt(2)}) circle (0.7mm) [fill=black];

                \draw ({-3*sqrt(3) +1*sqrt(6)/sqrt(5)}, {-1*sqrt(2)}) circle (0.7mm) [fill=black];
                \draw ({-2*sqrt(3) +1*sqrt(6)/sqrt(5)}, {-1*sqrt(2)}) circle (0.7mm) [fill=black];
                \draw ({-1*sqrt(3) +1*sqrt(6)/sqrt(5)}, {-1*sqrt(2)}) circle (0.7mm) [fill=black];
                \draw ({0*sqrt(3) +1*sqrt(6)/sqrt(5)}, {-1*sqrt(2)}) circle (0.7mm) [fill=black];
                \draw ({1*sqrt(3) +1*sqrt(6)/sqrt(5)}, {-1*sqrt(2)}) circle (0.7mm) [fill=black];




\node[blue] at (-4,3) {\large $\boldsymbol{\times}$};
\node[above] at (-4,3) {\scriptsize $(-4,3)$};
\node[blue] at (-2,3) {\large $\boldsymbol{\times}$};
\node[above] at (-2,3) {\scriptsize $(-2,3)$};
\node[blue] at (0,3) {\large $\boldsymbol{\times}$};
\node[above] at (0,3) {\scriptsize $(0,3)$};
\node[blue] at (1,3) {\large $\boldsymbol{\times}$};
\node[above] at (1,3) {\scriptsize $(1,3)$};
\node[blue] at (3,3) {\large $\boldsymbol{\times}$};
\node[above] at (3,3) {\scriptsize $(3,3)$};

\node[blue] at (-3,1) {\large $\boldsymbol{\times}$};
\node[below] at (-3,1) {\scriptsize $(-3,1)$};
\node[blue] at (-1,1) {\large $\boldsymbol{\times}$};
\node[below] at (-1,1) {\scriptsize $(-1,1)$};
\node[blue] at (1,1) {\large $\boldsymbol{\times}$};
\node[below] at (1,1) {\scriptsize $(1,1)$};
\node[blue] at (1,1) {\large $\boldsymbol{\times}$};
\node[below] at (2,1) {\scriptsize $(2,1)$};
\node[blue] at (2,1) {\large $\boldsymbol{\times}$};
\node[below] at (1,1) {\scriptsize $(1,1)$};
\node[blue] at (4,1) {\large $\boldsymbol{\times}$};
\node[below] at (4,1) {\scriptsize $(4,1)$};

\node[blue] at (-3,0) {\large $\boldsymbol{\times}$};
\node[below] at (-3,0) {\scriptsize $(-3,0)$};
\node[blue] at (-2,0) {\large $\boldsymbol{\times}$};
\node[below] at (-2,0) {\scriptsize $(-2,0)$};
\node[blue] at (0,0) {\large $\boldsymbol{\times}$};
\node[below] at (0,0) {\scriptsize $(0,0)$};
\node[blue] at (2,0) {\large $\boldsymbol{\times}$};
\node[below] at (2,0) {\scriptsize $(2,0)$};
\node[blue] at (3,0) {\large $\boldsymbol{\times}$};
\node[below] at (3,0) {\scriptsize $(3,0)$};

\node[blue] at (-4,-1) {\large $\boldsymbol{\times}$};
\node[above] at (-4,-1) {\scriptsize $(-4,-1)$};
\node[blue] at (-2,-1) {\large $\boldsymbol{\times}$};
\node[above] at (-2,-1) {\scriptsize $(-2,-1)$};
\node[blue] at (-1,-1) {\large $\boldsymbol{\times}$};
\node[below] at (-1,-1) {\scriptsize $(-1,-1)$};
\node[blue] at (1,-1) {\large $\boldsymbol{\times}$};
\node[above] at (1,-1) {\scriptsize $(1,-1)$};
\node[blue] at (3,-1) {\large $\boldsymbol{\times}$};
\node[above] at (3,-1) {\scriptsize $(3,-1)$};



\node[right] at (4.5,{2*sqrt(2)}) {$\red{y = 2\sqrt{2}}$};
\node[right] at (4.5,{1*sqrt(2)}) {$\red{y = \sqrt{2}}$};
\node[right] at (4.5,{0*sqrt(2)}) {$\red{y = 0}$};
\node[right] at (4.5,{-1*sqrt(2)}) {$\red{y = - \sqrt{2}}$};
\end{tikzpicture}
\caption{The lattice $\Lambda^*$ (black dots) and its rounded set $\mathcal{C} = \mathbf{r} (\Lambda^*)$ (blue crosses)}
\label{fig:mathcalC}
\end{figure}

We will need the following lemma. 

\begin{lemma}[\cite{PRW21}]
\label{lem:a-irrational-Beatty-Fraenkel}
Let $0 < \alpha \leq 1$.
For every $\beta \in \R$, the system $\mathcal{E}( \lfloor \frac{\Z+\beta}{\alpha} \rfloor )$ is a Riesz basis for $L^2[0,\alpha]$ with lower Riesz bound 
depending on $\alpha$ but not on $\beta$.
\end{lemma}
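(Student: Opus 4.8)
The plan is to reduce the claim, via scaling and modulation, to Avdonin's ``$\tfrac14$ in the mean'' theorem (Proposition~\ref{prop:Avdonin-1-4-in-the-mean}) on $L^2[0,1]$, after recentring the comparison lattice so as to cancel the systematic bias that the floor function introduces. Throughout, write $\mu_n := \frac{n+\beta}{\alpha}$ and $\gamma_n := \lfloor \mu_n \rfloor$, and let $\{x\} := x - \lfloor x \rfloor$ denote the fractional part. Note that $\mathcal{E}(\frac{\Z+\beta}{\alpha})$ is an orthogonal basis for $L^2[0,\alpha]$ by Proposition~\ref{prop:Fuglede-for-lattices} together with Lemma~\ref{lem:RB-basic-operations}. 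Since $\tfrac1\alpha \ge 1$ we have $\mu_{n+1}\ge \mu_n+1$, hence $\gamma_{n+1}\ge \gamma_n+1$; thus $n\mapsto \gamma_n$ is strictly increasing, the integers $\gamma_n$ are distinct, and $\{\gamma_n : n\in\Z\}$ is separated with gaps $\ge 1$. This is the only place the hypothesis $\alpha\le 1$ enters.

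First I would scale and modulate. By Lemma~\ref{lem:RB-basic-operations}~(iii) with $c=\alpha$, the system $\mathcal{E}(\{\gamma_n : n\in\Z\})$ is a Riesz basis for $L^2[0,\alpha]$ if and only if $\mathcal{E}(\{\alpha\gamma_n : n\in\Z\})$ is a Riesz basis for $L^2[0,1]$, with lower bound exactly $\alpha$ times that of the latter. By Lemma~\ref{lem:RB-basic-operations}~(ii) I may subtract the constant $\beta-\tfrac\alpha2$ from every frequency without affecting the Riesz property or its bounds; setting $\nu_n := \alpha\gamma_n - \beta + \tfrac\alpha2$ and using $\alpha\gamma_n = (n+\beta) - \alpha\{\mu_n\}$ gives
\begin{equation*}
\nu_n = n + \delta_n, \qquad \delta_n = \alpha\big(\tfrac12 - \{\mu_n\}\big) \in \big(-\tfrac\alpha2, \tfrac\alpha2\big].
\end{equation*}
This half-unit recentring is the decisive step: the floor undershoots $\mu_n$ by $\approx \tfrac12$ on average, so comparing $\gamma_n$ directly against $\mu_n$ would force a mean perturbation near $\tfrac\alpha2$ (useless when $\alpha$ is close to $1$), whereas comparing $\nu_n$ against $\Z$ produces a perturbation of \emph{zero} average.

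Next I would verify Avdonin's mean condition for $(\nu_n)$. Since $\{\mu_{n+1}\} = \{\{\mu_n\}+\theta\}$ with $\theta := \{1/\alpha\}$, the sequence $(\{\mu_n\})$ is an orbit of rotation by $\theta$, and the block average equals $\alpha\big(\tfrac12 - \tfrac1P\sum_{k=mP}^{(m+1)P-1}\{\mu_k\}\big)$. I would split into cases. If $\tfrac1\alpha\in\N$ (so $\theta=0$), then $\gamma_n = \tfrac1\alpha n + \lfloor \beta/\alpha\rfloor$, so $\{\alpha\gamma_n : n\in\Z\}$ is a translate of $\Z$ and $\mathcal{E}(\{\nu_n : n\in\Z\})$ is already an orthonormal basis for $L^2[0,1]$ by modulation---no perturbation theorem is needed. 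If $\theta$ is rational, say $\tfrac1\alpha = p/q$ in lowest terms with $q\ge 2$ (forcing $p\ge 3$, as $\alpha<1$), the orbit is periodic of period $q$; taking $P=q$ makes every block a full period, and the exact identity $\sum_{i=0}^{q-1}\{a+i/q\} = \tfrac{q-1}{2}+\{qa\}$ shows the block average differs from $\tfrac12$ by at most $\tfrac{1}{2q}$, whence $\tfrac1P\big|\sum\delta_k\big| \le \tfrac{1}{2p}\le \tfrac16 < \tfrac14$. If $\theta$ is irrational, I would use equidistribution: by Koksma's inequality the block average is bounded by the total variation of the sawtooth $t\mapsto\{t\}-\tfrac12$ times the extreme discrepancy $D_P$ of $\{k\theta\}_{k=0}^{P-1}$, which is translation invariant and hence independent of both the block index $m$ and the phase $\beta/\alpha$; since $D_P\to 0$, I choose $P$ large enough that this bound is below $\tfrac14$. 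In every case $(\nu_n)$ is separated and satisfies $\sup_m \tfrac1P\big|\sum_{k=mP}^{(m+1)P-1}\delta_k\big| < \tfrac14$, so Proposition~\ref{prop:Avdonin-1-4-in-the-mean} applies and $\mathcal{E}(\{\nu_n : n\in\Z\})$ is a Riesz basis for $L^2[0,1]$; unwinding the scaling and modulation yields the claim for $L^2[0,\alpha]$.

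For the $\beta$-independence of the lower bound, observe that in each case the inputs to Avdonin's theorem---the length $P$, the mean bound $L<\tfrac14$, and the separation constant---depend only on $\alpha$; since the resulting bounds are functions of these inputs alone (and, in the degenerate case, the lower bound is exactly $\alpha$), the lower bound for $L^2[0,1]$ is independent of $\beta$, and scaling multiplies it by $\alpha$. I expect the main obstacle to be the uniform-in-$m$-and-$\beta$ control of the block averages in the irrational case---namely the discrepancy estimate and its translation invariance---together with the careful handling of the rational and degenerate cases; this is precisely where the hypothesis $\alpha\le 1$ and the half-unit recentring conspire to keep the mean perturbation strictly below $\tfrac14$ for \emph{all} admissible $\alpha$, rather than only for $\alpha<\tfrac12$ as a naive application of Avdonin's theorem would give.
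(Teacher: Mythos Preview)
Your argument is essentially the paper's: rescale to $L^2[0,1]$, recentre by $\tfrac12$ (your shift by $\beta-\tfrac{\alpha}{2}$ is exactly the paper's replacement of $\lfloor\tfrac{k+\beta}{\alpha}\rfloor$ by $\lfloor\tfrac{k+\beta}{\alpha}\rfloor+\tfrac12$ before scaling) so that the floor's systematic bias cancels, then apply Avdonin, with equidistribution handling the irrational rotation and periodicity the rational one. Your use of Koksma's inequality and translation invariance of discrepancy is precisely what the paper packages as the Weyl--Khinchin statement (Proposition~\ref{prop:thm26-in-PRW21}/Corollary~\ref{cor:thm17-in-PRW21}), and your explicit rational-case bound $\tfrac{1}{2p}\le\tfrac16$ is a little sharper than the paper's, which simply observes that for rational $\alpha$ there are only finitely many distinct sequences $\lfloor\tfrac{\Z+\beta}{\alpha}\rfloor$ and takes the minimum of their lower bounds.

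The one point that needs tightening is the $\beta$-independence. You write that ``the resulting bounds are functions of these inputs alone,'' but Proposition~\ref{prop:Avdonin-1-4-in-the-mean} as stated asserts only that a Riesz basis exists, with no control on the constants; nothing in that statement rules out the lower bound depending on the full sequence $(\delta_n)$ rather than just on $(P,L,\delta)$. The paper closes this gap by invoking Lindner's explicit estimate (Proposition~\ref{prop:thm1-in-Li99}), which gives a lower Riesz bound $A(B,\delta,L,P)$ depending only on $\sup_n|\delta_n|$, the separation, $L$, and $P$. You should cite that (or an equivalent quantitative form) to make the uniformity claim rigorous.
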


Sets of the form $\lfloor \frac{\Z+\beta}{\alpha} \rfloor$ with irrational $\alpha \in \R$ are known as \emph{Beatty--Fraenkel sequences} \cite{Be26,Be27,Fr69}. 
For self-containedness of the paper, we provide a proof of Lemma \ref{lem:a-irrational-Beatty-Fraenkel} in Appendix~\ref{sec:proof-lem:a-irrational-Beatty-Fraenkel}.

By definition of $J$ and $\mathcal{X}_j$, we obtain the following from Lemma \ref{lem:a-irrational-Beatty-Fraenkel}.
\begin{enumerate}[(I)]
\item
The system $\mathcal{E}(J)$ is a Riesz basis for $L^2(I)$ with any interval $I \subseteq \R$ of length $s$, and with Riesz bounds depending only on $s$.
This means that $J$ is a set of uniqueness, sampling and interpolation for $PW(I)$ with any interval $I \subseteq \R$ of length $s$.

\item
For each $j \in J$, the system $\mathcal{E}(\mathcal{X}_j)$ is a Riesz basis for $L^2(I)$ with any interval $I \subseteq \R$ of length $a$, 
and with Riesz bounds depending only on $a$.
This means that for each $j \in J$, the set $\mathcal{X}_j$ is a set of uniqueness, sampling and interpolation for $PW(I)$ with any interval $I \subseteq \R$ of length $a$.
\end{enumerate}

It should be noted that the set $\mathcal{X}_j$ depends on $j \in J$. 
However, this dependency can be removed by defining $\mathcal{X}_j = \mathrm{r} \big(\frac{1}{a} \Z\big)$ for all $j \in J$.  
This yields a tensor product set $\mathcal{C} = \mathrm{r} \big(\frac{1}{a} \Z \big) \times \mathrm{r} \big(\frac{1}{s} \Z\big)$. 
As we shall see in the proof below, the properties (I) and (II) are all that we need from the set $\mathcal{C}$; hence, such a tensor product set also works.

To establish Proposition~\ref{prop:main2-lower-triangular} for $d=2$, it is enough to show the following.

\begin{proposition}\label{prop:base-case-d2}
Let $H$ be a matrix of the form \eqref{eqn:Case1-parallel-y-axis-matrix-A}. 
Then there exists a set $\mathcal{C}\subseteq \Z^2$
such that $\mathcal{E}(\mathcal{C})$ is a Riesz basis for $L^2(H [0,1]^2)$.
Moreover, we may choose 
\begin{enumerate}[(i)]
\item
$\mathcal{C} = \mathbf{r} ( H^{-T}\mathbb{Z}^{d} )$ which can be expressed as \eqref{eqn:mathcalC-decomposition}, or 

\item
$\mathcal{C} = \mathrm{r} \big(\frac{1}{a} \Z \big) \times \mathrm{r} \big(\frac{1}{s} \Z\big)$ which can be expressed as
\begin{equation}\label{eqn:mathcalC-decomposition-tensor}
\bigcup_{j \in J} \, \mathcal{X}_j  \times  \{ j \} 
\quad \text{with} \;\; J = \mathrm{r} \big(\tfrac{1}{s} \Z\big) \;\; \text{and} \;\; \mathcal{X}_{\boldsymbol{j}} = \mathrm{r} \big(\tfrac{1}{a} \Z \big) \;\; \text{for all} \;\; j \in J .
\end{equation}
\end{enumerate} 
\end{proposition}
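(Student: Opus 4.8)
The plan is to verify directly the two defining properties of a Riesz basis from Definition~\ref{def:RieszBases} for $\mathcal{E}(\mathcal{C})$ --- the two-sided coefficient inequality \eqref{eqn:HilbertSpaceRieszCondition} together with completeness in $L^2(H[0,1]^2)$ --- by exploiting the fibered structure $\mathcal{C} = \bigcup_{j \in J} \mathcal{X}_j \times \{j\}$ recorded in \eqref{eqn:mathcalC-decomposition} and \eqref{eqn:mathcalC-decomposition-tensor}. Since this decomposition and properties (I) and (II) hold for the sets in both (i) and (ii), and since the argument below uses only (I) and (II), it will settle both cases at once.

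First I would isolate the single geometric fact that drives everything. Writing $S := H[0,1]^2$ and a point of $S$ as $H(\alpha,\beta)^T = (a\alpha,\; c\alpha + s\beta)$ with $\alpha,\beta \in [0,1]$, the relation $\omega_1 = a\alpha$ forces $\omega_1 \in [0,a]$, after which $\omega_2$ ranges exactly over the interval $I_{\omega_1} := [\tfrac{c}{a}\omega_1,\; \tfrac{c}{a}\omega_1 + s]$, whose length is $s$ \emph{independently of} $\omega_1$. Thus $S$ fibers over $[0,a]$ with every vertical slice an interval of the fixed length $s$, which is precisely what allows property (I) to be invoked with constants that are uniform along the fibers.

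Next I would establish \eqref{eqn:HilbertSpaceRieszCondition}. Take a finitely supported sequence $\{c_{x,j}\}_{(x,j)\in\mathcal{C}}$, set $F = \sum_{j\in J}\sum_{x\in\mathcal{X}_j} c_{x,j}\, e^{2\pi i (x,j)\cdot(\cdot)}$, and write $F(\omega_1,\omega_2) = \sum_{j\in J} h_j(\omega_1)\, e^{2\pi i j\omega_2}$ with $h_j(\omega_1) := \sum_{x\in\mathcal{X}_j} c_{x,j}\, e^{2\pi i x\omega_1}$. By the fibration and Fubini, $\|F\|_{L^2(S)}^2 = \int_0^a \big\| \sum_{j\in J} h_j(\omega_1)\, e^{2\pi i j(\cdot)} \big\|_{L^2(I_{\omega_1})}^2\, d\omega_1$. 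Property (I), applied on each slice $I_{\omega_1}$ of length $s$ with bounds depending only on $s$, shows the inner norm is comparable to $\sum_{j\in J} |h_j(\omega_1)|^2$ uniformly in $\omega_1$; integrating and then applying property (II) on $[0,a]$ (bounds depending only on $a$) to each $\int_0^a |h_j(\omega_1)|^2\, d\omega_1 = \big\| \sum_{x\in\mathcal{X}_j} c_{x,j}\, e^{2\pi i x(\cdot)} \big\|_{L^2[0,a]}^2$ shows the total is comparable to $\sum_{j\in J}\sum_{x\in\mathcal{X}_j} |c_{x,j}|^2$. Chaining the two comparisons gives \eqref{eqn:HilbertSpaceRieszCondition} with Riesz bounds that are products of the bounds furnished by (I) and (II), and a density argument extends it to all of $\ell_2(\mathcal{C})$.

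Finally, for completeness I would take $F \in L^2(S)$ orthogonal to every $e^{2\pi i (x,j)\cdot(\cdot)}$ and, by Fubini, read off $\langle G_j,\, e^{2\pi i x(\cdot)}\rangle_{L^2[0,a]} = 0$ for all $x \in \mathcal{X}_j$, where $G_j(\omega_1) := \langle F(\omega_1,\cdot),\, e^{2\pi i j(\cdot)}\rangle_{L^2(I_{\omega_1})}$ lies in $L^2[0,a]$ by Cauchy--Schwarz. The completeness half of property (II) forces $G_j \equiv 0$ for each $j$; hence for a.e.\ $\omega_1$ the slice $F(\omega_1,\cdot)$ is orthogonal to all $e^{2\pi i j(\cdot)}$, $j\in J$, and the completeness half of property (I) forces $F(\omega_1,\cdot) = 0$, so $F = 0$. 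Together with the coefficient inequality this yields that $\mathcal{E}(\mathcal{C})$ is a Riesz basis for $L^2(S)$. The point requiring care is the uniformity of the Riesz bounds along the two families of fibers; this is guaranteed because every slice $I_{\omega_1}$ has the \emph{same} length $s$ and because (I) and (II) provide bounds independent of the location of the interval and of the row index $j$. The remainder is routine Fubini and density bookkeeping.
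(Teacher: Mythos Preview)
Your argument is correct. The completeness part is essentially the paper's own argument rephrased dually: the paper works in $PW(S)$ and shows that $\mathcal{C}$ is a set of uniqueness there via the projection--slice formula, which is exactly your orthogonality computation after one Fourier transform (the paper's $P_j F$ is your $G_j$ up to conjugation). Where you genuinely diverge is in the Riesz--sequence half. The paper establishes it by showing that $\mathcal{C}$ is a set of \emph{interpolation} for $PW(S)$: given $\{b_\gamma\}\in\ell_2(\mathcal{C})$ it first interpolates along each row $\mathcal{X}_j$ to produce $Q_j\in L^2[0,a]$, then interpolates the values $\{Q_j(\omega_1)\}_{j\in J}$ along the vertical fiber $I_{\omega_1}$, and finally assembles these into $F\in L^2(S)$; the uniform bounds in (I) and (II) are used to control $\|F\|_{L^2(S)}$. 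You instead verify the two--sided coefficient inequality \eqref{eqn:HilbertSpaceRieszCondition} directly by a nested Fubini computation, in the spirit of the tensor--product proof in Appendix~\ref{sec:proof-thm:tensor-product-Kadec-Avdonin}, with the non--tensor structure absorbed by the uniformity of the bounds in (I) across the moving slices $I_{\omega_1}$. Your route is shorter and avoids the Paley--Wiener/interpolation machinery altogether; the paper's route, on the other hand, is more constructive (it actually produces the interpolating function) and makes transparent why the induction to $d\geq 2$ goes through in exactly the same way.
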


\begin{remark} 
\rm
The set $\mathrm{r} \big(\frac{1}{a} \Z \big) \times \mathrm{r} \big(\frac{1}{s} \Z\big)$ in ({\romannumeral 2}) can be easily replaced with $\mathrm{r} \big(\frac{1}{a} \Z + \delta_1  \big) \times \mathrm{r} \big(\frac{1}{s} \Z + \delta_2 \big)$ for arbitrary constants $\delta_1,   \delta_2 \in \R$ (see Lemma~\ref{lem:a-irrational-Beatty-Fraenkel}). 
\end{remark}

\begin{proof}
We will use the expressions \eqref{eqn:mathcalC-decomposition} and \eqref{eqn:mathcalC-decomposition-tensor} for $\mathcal{C}$. 
Using \eqref{eqn:Case1-parallel-y-axis-matrix-A}, we may write $S := H [0,1]^2 = \{ (\omega_1, \omega_2) : 0 \leq \omega_1 \leq a , \; \frac{c}{a}\omega_1 \leq \omega_2 \leq \frac{c}{a}\omega_1 + s \}$, see Figure~\ref{fig:setS-using-H} below.

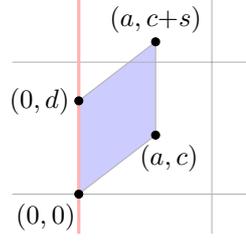
\begin{figure}[h!]
\centering
\begin{tikzpicture}[scale=1.75]
\draw[black!30] (0,0) -- ({1/sqrt(3)}, {1/sqrt(5)}) -- ({1/sqrt(3)}, {1/sqrt(5) + 1/sqrt(2)}) -- (0, {1/sqrt(2)}) -- (0,0) [fill=blue!20];
    \draw[black!30] (-0.5,-0.3) grid (1.3,1.5);
	\draw[very thick,red!30] (0,-0.3) -- (0,1.5);

                \draw ({-0/sqrt(3)},{-0/sqrt(5) + -0/sqrt(2)}) circle (0.3mm) [fill=black];
                \draw ({-0/sqrt(3)},{-0/sqrt(5) + 1/sqrt(2)}) circle (0.3mm) [fill=black];

                \draw ({1/sqrt(3)},{1/sqrt(5) + 0/sqrt(2)}) circle (0.3mm) [fill=black];
                \draw ({1/sqrt(3)},{1/sqrt(5) + 1/sqrt(2)}) circle (0.3mm) [fill=black];

\node[below] at (-0.25,0) {$(0,0)$};
\node[left] at ({-0/sqrt(3)},{-0/sqrt(5) + 1/sqrt(2)}) {$(0,d)$};
\node[below] at ({1/sqrt(3) + 0.1},{1/sqrt(5) + 0/sqrt(2)}) {$(a,c)$};
\node[above] at ({1/sqrt(3)},{1/sqrt(5) + 1/sqrt(2)}) {$(a,c{+}s)$};
\end{tikzpicture}
\caption{The set $S = H [0,1]^2$}
\label{fig:setS-using-H}
\end{figure}

\medskip

\noindent
\textbf{Completeness}. \
To show that $\mathcal{E}(\mathcal{C})$ is complete in $L^2(S)$, we suppose that $f \in PW(S)$ satisfies $f(\gamma) = 0$ for all $\gamma \in \mathcal{C}$ and show that $f = 0$.

Note that since $f \in PW(S)$, its $2$-dimensional Fourier transform $F := \widehat{f} \in L^2(\R^2)$ is supported in $S \subseteq \R^2$.
The set $S$ is bounded, so the function $f$ is analytic (in particular, continuous)
and the following equation\footnote{This equation is also known as the \emph{projection-slice theorem} or \emph{Fourier slice theorem} \cite{Br56}.} holds for all $x,y \in \R$:
\[
\begin{split}
f(x,y)
&= \iint F(\omega_1,\omega_2) \, e^{2 \pi i (x \omega_1 + y \omega_2)} \, d\omega_1 \, d\omega_2 \\
&= \int \underbrace{\left( \int F(\omega_1,\omega_2) \, e^{2 \pi i y \omega_2} \, d\omega_2 \right) }_{=: P_y F (\omega_1)} \, e^{2 \pi i x \omega_1} \, d\omega_1 \\
&= \mathcal{F}^{-1} [P_y F] (x) ,
\end{split}
\]
where $\mathcal{F} := \mathcal{F}_1$ denotes the $1$-dimensional Fourier transform and
$F \mapsto P_y F$ is the linear mapping defined by $P_y F (\omega_1) := \int F(\omega_1,\omega_2) \, e^{2 \pi i y \omega_2} \, d\omega_2$.
Since $\supp F \subseteq S \subseteq [0,a] \times \R$, we have $\supp P_y F \subseteq [0,a]$, so that $\mathcal{F}^{-1} [P_y F] \in PW[0,a]$.
Recall that for each $j \in J$, the set $\mathcal{X}_j$ is a set of uniqueness for $PW[0,a]$.
Since $f(\cdot,j) = \mathcal{F}^{-1} [P_j F] \in PW[0,a]$ and since $f(x,j) = 0$ for all $x \in \mathcal{X}_j$, we deduce that $f(\cdot,j) = 0$, equivalently, $P_j F = 0$.
Therefore, we have $P_j F = 0$ for all $j \in J$.

Now, it holds for all $j \in J$ and $\omega_1 \in \R$ that
\[
0 = P_j F (\omega_1)
:= \int F(\omega_1,\omega_2) \, e^{2 \pi i j \omega_2} \, d\omega_2
= \mathcal{F}^{-1} [F(\omega_1,\cdot)] (j) .
\]
Fix any $\omega_1 \in [0,a]$. Then
$\supp F(\omega_1,\cdot) \subseteq [\frac{c}{a}\omega_1,\frac{c}{a}\omega_1 {+} s]$ by the geometry of $S$, which implies that $\mathcal{F}^{-1} [F(\omega_1,\cdot)] \in PW[\frac{c}{a}\omega_1,\frac{c}{a}\omega_1 {+} s]$.
Since $J \subseteq \Z$ is a set of uniqueness for $PW(I)$ with any interval $I \subseteq \R$ of length $s$, we deduce that $\mathcal{F}^{-1} [F(\omega_1,\cdot)] = 0$ or equivalently, $F(\omega_1,\cdot) = 0$.
That is, $F(\omega_1,\cdot) = 0$ for all $\omega_1 \in [0,a]$.
Since $\supp F \subseteq S \subseteq [0,a] \times \R$, we conclude that $F = 0$ and in turn, $f = 0$.

\medskip

\noindent
\textbf{Riesz sequence}. \
Since $\mathcal{C} \subseteq \Z {\times} \Z$ is separated and since $S$ is bounded, the system $\mathcal{E}(\mathcal{C})$ is a Bessel sequence in $L^2(S)$. Thus, to prove that $\mathcal{E}(\mathcal{C})$ is a Riesz sequence in $L^2(S)$, it is enough to show that $\mathcal{C}$ is a set of interpolation for $PW(S)$.
To show this, we will fix an arbitrary sequence $\{b_\gamma\}_{\gamma \in \mathcal{C}} \in \ell_2 (\mathcal{C})$, and construct a function $f \in PW(S)$ satisfying $f(\gamma) = b_{\gamma}$ for all $\gamma \in \mathcal{C}$.

First, recall that for each $j \in J$ the set $\mathcal{X}_j$ is a set of interpolation for $PW[0,a]$.
Thus, for each $j \in J$ there exists a function $Q_j \in L^2[0,a]$ (correspondingly, $\mathcal{F}^{-1} [Q_j] \in PW [0,a]$) such that $\mathcal{F}^{-1} [Q_j] (x) = b_{(x,j)}$ for all $x \in \mathcal{X}_j$.

Now we claim that for a.e.~$\omega_1 \in [0,a]$, the sequence $\{ Q_j (\omega_1) \}_{j \in J}$ belongs in $\ell_2 (J)$.
To see this, we use the fact that for each $j \in J$ the set $\mathcal{X}_j \subseteq \R$ is a set of sampling for $PW[0,a]$, to get
\[
\sum_{x \in \mathcal{X}_j} |b_{(x,j)}|^2
= \sum_{x \in \mathcal{X}_j} |\mathcal{F}^{-1} [Q_j] (x)|^2
\;\asymp\;
\| \mathcal{F}^{-1} [Q_j] \|_{L^2(\R)}^2
= \| Q_j \|_{L^2[0,a]}^2 .
\]
That is, there are constants $0 < A_j \leq B_j < \infty$ depending only on $\mathcal{X}_j$ and $a$ such that
\begin{equation}\label{eqn:Xj-is-sampling-for-PW}
A_j \, \| Q_j \|_{L^2[0,a]}^2
\;\leq\;
\sum_{x \in \mathcal{X}_j} |b_{(x,j)}|^2
\;\leq\;
B_j \, \| Q_j \|_{L^2[0,a]}^2 .
\end{equation}
In fact, the Bessel bound $B_j$ depends only on the separation of $\mathcal{X}_j \subseteq \Z$ and the diameter of $[0,a]$ (see e.g., \cite[Theorem 17 on p.\,82, and Theorem 4 on p.\,135]{Yo01}), and therefore
$B := \sup_{j \in J} B_j < \infty$.
Moreover, we have $A := \inf_{j \in J} A_j > 0$ by the construction of $\mathcal{X}_j$ and by Lemma \ref{lem:a-irrational-Beatty-Fraenkel}.
Summing up the left inequality of \eqref{eqn:Xj-is-sampling-for-PW} for $j \in J$ gives
\begin{equation}\label{eqn:int-sumJ-Hj-omega1-is-finite}
\begin{split}
\infty
&>
\sum_{(x,j) \in \mathcal{C}} |b_{(x,j)}|^2
=\sum_{j \in J} \sum_{x \in \mathcal{X}_j} |b_{(x,j)}|^2 \\
&\geq
A \, \sum_{j \in J} \| Q_j \|_{L^2[0,a]}^2
= A \, \sum_{j \in J} \int_0^{a} |Q_j (\omega_1)|^2 \, d\omega_1
= A \, \int_0^{a} \sum_{j \in J} |Q_j (\omega_1)|^2 \, d\omega_1 ,
\end{split}
\end{equation}
which implies that $\sum_{j \in J} |Q_j (\omega_1)|^2 < \infty$ for a.e.~$\omega_1 \in [0,a]$, as desired.

Let us fix any $\omega_1 \in [0,a]$ satisfying $\{ Q_j (\omega_1) \}_{j \in J} \in \ell_2 (J)$.
Recall that $J \subseteq \Z$ is a set of sampling and interpolation for $PW(I)$ with any interval $I \subseteq \R$ of length $d$,
where the corresponding frame/Riesz bounds\footnote{A Riesz basis is also a frame with frame bounds coinciding with its Riesz bounds, see e.g., \cite[Theorem 5.4.1]{Ch16}.}, say $0 < A' \leq B' < \infty$, are \emph{independent} of $I$ by Lemma \ref{lem:a-irrational-Beatty-Fraenkel}.
The `interpolation' part implies the existence of a function $F_{\omega_1} \in L^2[\frac{c}{a}\omega_1,\frac{c}{a}\omega_1 {+} s]$ (correspondingly, $\mathcal{F}^{-1} [F_{\omega_1}] \in PW[\frac{c}{a}\omega_1,\frac{c}{a}\omega_1 {+} s]$) such that $\mathcal{F}^{-1} [F_{\omega_1}] (j) = Q_j (\omega_1)$ for all $j \in J$.
In turn, the `sampling' part implies
\[
\sum_{j \in J} |Q_j (\omega_1)|^2
= \sum_{j \in J} \big| \mathcal{F}^{-1} [F_{\omega_1}] (j) \big|^2
\;\asymp\;
\| \mathcal{F}^{-1} [F_{\omega_1}] \|_{L^2(\R)}^2
= \| F_{\omega_1} \|_{L^2[\frac{c}{a}\omega_1,\frac{c}{a}\omega_1 + s]}^2 ,
\]
that is,
\begin{equation}\label{eqn:J-is-sampling-set-with-universal-constants}
A' \, \| F_{\omega_1} \|_{L^2[\frac{c}{a}\omega_1,\frac{c}{a}\omega_1 + s]}^2
\;\leq\;
\sum_{j \in J} |Q_j (\omega_1)|^2
\;\leq\;
B' \, \| F_{\omega_1} \|_{L^2[\frac{c}{a}\omega_1,\frac{c}{a}\omega_1 + s]}^2 .
\end{equation}

Now, we define $F \in L^2(\R^2)$ by
\[
F(\omega_1,\omega_2) :=
\begin{cases}
F_{\omega_1} (\omega_2)
& \text{for a.e.}~ \omega_1 \in [0,a]
\;\; \text{and} \;\; \text{a.e.}~ \omega_2 \in [\frac{c}{a}\omega_1,\frac{c}{a}\omega_1 {+} s] , \\
0
& \text{otherwise} ,
\end{cases}
\]
which is clearly supported in $S$ and is square-integrable:
\[
\begin{split}
&\iint_S | F(\omega_1,\omega_2) |^2 \, d\omega_1 \, d\omega_2  \\
&= \int_{0}^{a} \int_{\frac{c}{a}\omega_1}^{\frac{c}{a}\omega_1 + s}
| F_{\omega_1} (\omega_2) |^2 \, d\omega_2 \, d\omega_1
= \int_{0}^{a} \| F_{\omega_1} \|_{L^2[\frac{c}{a}\omega_1,\frac{c}{a}\omega_1 + s]}^2 \, d\omega_1 \\
&\overset{\eqref{eqn:J-is-sampling-set-with-universal-constants}}{\leq}
\tfrac{1}{A'} \int_{0}^{a} \sum_{j \in J} |Q_j (\omega_1)|^2 \, d\omega_1
\overset{\eqref{eqn:int-sumJ-Hj-omega1-is-finite}}{\leq}
\tfrac{1}{A \, A'} \sum_{(x,j) \in \mathcal{C}} |b_{(x,j)}|^2
< \infty .
\end{split}
\]
The function $F$ essentially belongs in $L^2(S) \; (\hookrightarrow L^2(\R^2))$ and therefore $f := \mathcal{F}^{-1} [F]$ belongs in $PW (S)$.
Note that for any $j \in J$ and $x \in \mathcal{X}_j$,
\[
\begin{split}
f(x,j)
&= \iint F(\omega_1,\omega_2) \, e^{2 \pi i (x \omega_1 + j \omega_2)} \, d\omega_1 \, d\omega_2 \\
&= \int \underbrace{\Big[ \int F_{\omega_1} (\omega_2) \, e^{2 \pi i j \omega_2} \, d\omega_2 \Big]}_{= \mathcal{F}^{-1} [F_{\omega_1}] (j) = Q_j (\omega_1)} \, e^{2 \pi i x \omega_1} \, d\omega_1 \\
&= \int Q_j (\omega_1) \, e^{2 \pi i x \omega_1} \, d\omega_1
= \mathcal{F}^{-1} [Q_j] (x) = b_{(x,j)} .
\end{split}
\]
Hence, the function $f \in PW (S)$ satisfies $f(\gamma) = b_{\gamma}$ for all $\gamma \in \mathcal{C}$, as desired.
\end{proof}

\subsection{Proof of Proposition~\ref{prop:main2-lower-triangular} for $d \geq 2$}

Now, let us consider the general $d$-dimensional case. 
We may assume that 
\begin{equation}\label{eqn:Case1-parallel-y-axis-matrix-A-dxd}
H
=
\left( 
\begin{array}{c|c}
a \, & \, \mathbf{0} \\
\hline  \\[-4mm]
\mathbf{c} \, & \, H'
\end{array}
\right)
\end{equation}
with $0<a \leq 1$, $\mathbf{c}\in\mathbb{R}^{d-1}$, and $H' \in \R^{(d-1) {\times} (d-1)}$ a \emph{lower} triangular matrix with diagonals lying in $(0,1]$. 
Then
\[
H^{-1}
=
\left(
\begin{array}{c|c}
\frac{1}{a}  \, & \,  \mathbf{0} \\[1mm]
\hline  \\[-3mm]
\mathbf{z}  \, & \,  (H')^{-1}
\end{array}
\right) 
\quad \text{for some} \;\; \mathbf{z} \in \R^{d-1} , 
\]
so that 
\begin{equation}\label{eqn:A-T-form}
\quad
H^{-T}
=
\left(
\begin{array}{c|c}
\frac{1}{a} & \mathbf{z}^{*} \\[1mm]
\hline  \\[-3mm]
\mathbf{0} & (H')^{-T}
\end{array}
\right) 
\end{equation}
where $\mathbf{z}^{*} := (\overline{\mathbf{z}})^{T}$ denotes the conjugate transpose (or the Hermitian transpose) of $\mathbf{z}$. 
Therefore, we have
\[ 
H^{-T}\mathbb{Z}^d = \left\{m(\tfrac{1}{a},\mathbf{0}) + (\mathbf{z}^{*T}\mathbf{n},(H')^{-T}\mathbf{n})
\;\colon\;
 m\in\mathbb{Z}, \; \mathbf{n}\in\mathbb{Z}^{d-1} \right\}
\]
so that for each $\boldsymbol{y}\in (H')^{-T}\mathbb{Z}^{d-1}$ the set $\{x\colon (x,\boldsymbol{y})\in\Lambda^*\}$ is a shifted copy of the lattice $\frac{1}{a}\mathbb{Z}$.

To establish Proposition~\ref{prop:main2-lower-triangular} for $d \geq 2$, it is enough show the following.

\begin{proposition} 
Let $H$ be a  matrix of the form \eqref{eqn:Case1-parallel-y-axis-matrix-A-dxd}. 
Then there exists a set $\mathcal{C}\subseteq \Z^d$
such that $\mathcal{E}(\mathcal{C})$ is a Riesz basis for $L^2(H [0,1]^d)$.
Moreover, we may choose 
\begin{enumerate}[(i)]
\item
$\mathcal{C} = \mathbf{r} ( H^{-T}\mathbb{Z}^{d} )$, or  

\item
$\mathcal{C} = \mathrm{r} \big(\frac{1}{a_{11}} \Z \big) \times \cdots \times \mathrm{r} \big(\frac{1}{a_{dd}} \Z\big)$, where $a_{11}, \ldots, a_{dd}$ are the diagonals of $H$. 
\end{enumerate}  
\end{proposition}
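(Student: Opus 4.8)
The plan is to prove the statement by induction on the dimension $d$, using Proposition~\ref{prop:base-case-d2} (more precisely, its one-dimensional specialization) as the base case and the projection-slice argument of that proof as the inductive engine. For $d=1$ the matrix is $H=(a)$ with $0<a\le 1$, and both candidate sets coincide with $\mathrm{r}(\tfrac1a\Z)=\lfloor\tfrac{\Z+a/2}{a}\rfloor$, a Beatty--Fraenkel sequence; Lemma~\ref{lem:a-irrational-Beatty-Fraenkel} then gives directly that $\mathcal{E}(\mathrm{r}(\tfrac1a\Z))$ is a Riesz basis for $L^2[0,a]$, with lower bound depending only on $a$.

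For the inductive step, write $H$ in the block form \eqref{eqn:Case1-parallel-y-axis-matrix-A-dxd} with first diagonal $a\le 1$ and lower triangular block $H'\in\R^{(d-1)\times(d-1)}$. The geometry to exploit is that $S:=H[0,1]^d$ fibers over the first coordinate: $\omega_1$ ranges over $[0,a]$ and, for each fixed $\omega_1$, the slice $\{\bomega':(\omega_1,\bomega')\in S\}$ is the translate $\tfrac{\omega_1}{a}\bc+H'[0,1]^{d-1}$ of $H'[0,1]^{d-1}$. Dually, the block-triangular form \eqref{eqn:A-T-form} of $H^{-T}$ shows that projecting $H^{-T}\Z^d$ onto the last $d-1$ coordinates yields $(H')^{-T}\Z^{d-1}$, while each fiber over a point of $(H')^{-T}\Z^{d-1}$ is a shifted copy of $\tfrac1a\Z$. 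Since $a\le 1$ and the diagonals of $H'$ lie in $(0,1]$, the coordinatewise rounding $\mathbf{r}$ is injective on $H^{-T}\Z^d$, and the candidate set decomposes as $\mathcal{C}=\bigcup_{\bj\in\mathcal{C}'}\mathcal{X}_{\bj}\times\{\bj\}$, where $\mathcal{C}'=\mathbf{r}((H')^{-T}\Z^{d-1})$ is exactly the set furnished by the induction hypothesis applied to $H'$, and each $\mathcal{X}_{\bj}=\mathrm{r}(\tfrac1a\Z+\delta_{\bj})$ is a rounded shift of $\tfrac1a\Z$.

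Next I would transcribe the two halves of the proof of Proposition~\ref{prop:base-case-d2} with $J$ replaced by the $(d-1)$-dimensional set $\mathcal{C}'$ and the intervals of length $s$ replaced by the translated parallelepipeds above. By the induction hypothesis, $\mathcal{C}'$ is a set of uniqueness, sampling and interpolation for $PW(H'[0,1]^{d-1})$; by Lemma~\ref{lem:RB-basic-operations}({\romannumeral 1}) these properties, and the attached bounds, are unchanged for $PW$ of every translate of $H'[0,1]^{d-1}$, in particular for every slice $PW(\tfrac{\omega_1}{a}\bc+H'[0,1]^{d-1})$. In the completeness part, for $f\in PW(S)$ vanishing on $\mathcal{C}$ one forms the partial transform $P_{\bj}F(\omega_1)=\int_{\R^{d-1}}F(\omega_1,\bomega')\,e^{2\pi i\bj\cdot\bomega'}\,d\bomega'$, observes $f(\cdot,\bj)=\mathcal{F}^{-1}[P_{\bj}F]\in PW[0,a]$ vanishes on the uniqueness set $\mathcal{X}_{\bj}$, hence $P_{\bj}F\equiv 0$ for every $\bj\in\mathcal{C}'$, and then slices in $\omega_1$: the identity $\mathcal{F}_{d-1}^{-1}[F(\omega_1,\cdot)](\bj)=0$ for all $\bj\in\mathcal{C}'$ forces $F(\omega_1,\cdot)=0$ since $\mathcal{C}'$ is a uniqueness set for the slice, giving $F=0$. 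In the interpolation part, given $\{b_{\bgamma}\}\in\ell^2(\mathcal{C})$ one solves the one-dimensional interpolation on each fiber to produce $Q_{\bj}\in L^2[0,a]$, uses the sampling bounds for $\mathcal{X}_{\bj}$ to show $\sum_{\bj}\|Q_{\bj}\|^2<\infty$ so that $\{Q_{\bj}(\omega_1)\}_{\bj}\in\ell^2(\mathcal{C}')$ for a.e.~$\omega_1$, then solves the $(d-1)$-dimensional interpolation on each slice to build $F_{\omega_1}$ and assembles $F(\omega_1,\bomega'):=F_{\omega_1}(\bomega')$, verifying $F\in L^2(S)$ and that $f=\mathcal{F}_d^{-1}[F]$ interpolates the data, exactly as in the $d=2$ computation.

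The step I expect to require the most care---and the real reason the induction closes---is the uniformity of the bounds across the continuum of slices. Two sources of uniformity must be isolated: first, the one-dimensional lower sampling bound for the fibers $\mathcal{X}_{\bj}$ must be bounded below independently of the shift $\delta_{\bj}$, which is exactly the shift-independence guaranteed by Lemma~\ref{lem:a-irrational-Beatty-Fraenkel} (the upper bound being uniform by separation); second, the $(d-1)$-dimensional sampling and interpolation constants for the slices $\tfrac{\omega_1}{a}\bc+H'[0,1]^{d-1}$ must not depend on $\omega_1$, which is precisely the translation invariance supplied by Lemma~\ref{lem:RB-basic-operations}({\romannumeral 1}) applied to the single set $\mathcal{C}'$. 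With these two uniform constants in hand, the $\ell^2$ estimates feeding into $\|F\|_{L^2(S)}^2\lesssim\sum_{\bgamma}|b_{\bgamma}|^2$ go through verbatim. Finally, the tensor-product alternative (ii) follows by the same argument: one takes the induction hypothesis in the form $\mathcal{C}'=\mathrm{r}(\tfrac{1}{a_{22}}\Z)\times\cdots\times\mathrm{r}(\tfrac{1}{a_{dd}}\Z)$ together with the $\bj$-independent fibers $\mathcal{X}_{\bj}=\mathrm{r}(\tfrac{1}{a_{11}}\Z)$, so that $\mathcal{C}=\mathrm{r}(\tfrac{1}{a_{11}}\Z)\times\mathcal{C}'$; since the proof uses only the uniqueness, sampling and interpolation properties with uniform bounds, and not the dual-lattice origin of the sets, it applies unchanged.
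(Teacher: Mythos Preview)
Your proposal is correct and follows essentially the same induction-plus-projection-slice scheme as the paper's proof, with the minor cosmetic difference that you start the induction at $d=1$ (via Lemma~\ref{lem:a-irrational-Beatty-Fraenkel}) rather than at $d=2$; this is if anything cleaner, since it absorbs Proposition~\ref{prop:base-case-d2} into the inductive step. You are also more explicit than the paper about the two sources of uniformity (shift-independence of the one-dimensional lower bound from Lemma~\ref{lem:a-irrational-Beatty-Fraenkel}, and translation-invariance of the $(d{-}1)$-dimensional bounds from Lemma~\ref{lem:RB-basic-operations}({\romannumeral 1})), which the paper leaves implicit in the phrase ``a similar modification.''
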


\begin{remark}
\rm
The set $\mathrm{r} \big(\frac{1}{a_{11}} \Z \big) \times \cdots \times \mathrm{r} \big(\frac{1}{a_{dd}} \Z\big)$ in ({\romannumeral 2}) can be easily replaced with $\mathrm{r} \big(\frac{1}{a_{11}} \Z + \delta_1 \big) \times \cdots \times \mathrm{r} \big(\frac{1}{a_{dd}} \Z + \delta_d \big)$ for arbitrary constants $\delta_1, \ldots, \delta_d \in \R$ (see Lemma~\ref{lem:a-irrational-Beatty-Fraenkel}). 
\end{remark}

\begin{proof}
The proof proceeds by induction on $d$. The base case $d=2$ was proved in Section~\ref{subsec:case-d2} (see Proposition~\ref{prop:base-case-d2}).
Assume that the theorem holds for $d-1$ and show that it also holds for $d$.

By the induction hypothesis, we have that $\mathcal{E}(\mathcal{C}_{d-1})$ is a Riesz basis for $L^2(H'  [0,1]^{d-1})$, where the set $\mathcal{C}_{d-1} \subseteq \Z^{d-1}$ is either $\mathbf{r} ( (H')^{-T}\mathbb{Z}^{d-1} )$ or $\mathrm{r} \big(\frac{1}{\ell_{11}} \Z \big) \times \cdots \times \mathrm{r} \big(\frac{1}{\ell_{(d-1)(d-1)}} \Z\big)$ with $\ell_{11}, \ldots, \ell_{(d-1)(d-1)}$ denoting the diagonals of $H'$.

\begin{enumerate}[(i)]
\item
If $\mathcal{C}_{d-1} = \mathbf{r} ( (H')^{-T}\mathbb{Z}^{d-1} )$, then define $\mathcal{C}_{d} := \mathbf{r} ( H^{-T}\mathbb{Z}^{d} )$. Using \eqref{eqn:A-T-form}, we may also write
\begin{equation}\label{eqn:GeneralCase-rounding}
\mathcal{C}_{d} 
= \bigcup_{\boldsymbol{j} \,\in\, \mathcal{C}_{d-1}} \, \mathcal{X}_{\boldsymbol{j}} \times  \{ \boldsymbol{j}\} ,
\end{equation}
where for each $\boldsymbol{j}\in \mathcal{C}_{d-1}$, we define $\mathcal{X}_{\boldsymbol{j}} := \mathrm{r} (\{x\colon (x,\boldsymbol{y})\in H^{-T}\mathbb{Z}^{d} \})$ with $\boldsymbol{y}\in (H')^{-T}\mathbb{Z}^{d-1}$ being the closest element to $\boldsymbol{j}$.

\item
If $\mathcal{C}_{d-1} = \mathrm{r} \big(\frac{1}{\ell_{11}} \Z \big) \times \cdots \times \mathrm{r} \big(\frac{1}{\ell_{(d-1)(d-1)}} \Z\big)$, then define 
\[
\mathcal{C}_d 
:= \mathrm{r} \big(\tfrac{1}{a} \Z \big) \times \mathcal{C}_{d-1}   =  \mathrm{r} \big(\tfrac{1}{a} \Z \big) \times \mathrm{r} \big(\tfrac{1}{\ell_{11}} \Z \big) \times \cdots \times \mathrm{r} \big(\tfrac{1}{\ell_{(d-1)(d-1)}} \Z\big) , 
\]
where $a$, $\ell_{11}$, $\cdots$, $\ell_{(d-1)(d-1)}$ are the diagonals of $H$. 
We may also write  
\begin{equation}\label{eqn:GeneralCase-tensor}
\mathcal{C}_{d} 
= \bigcup_{\boldsymbol{j} \,\in\, \mathcal{C}_{d-1}} \, \mathcal{X}_{\boldsymbol{j}} \times  \{ \boldsymbol{j}\}  
\quad \text{with} \;\; \mathcal{X}_{\boldsymbol{j}} = \mathrm{r} \big(\tfrac{1}{a} \Z \big) \;\; \text{for all} \;\; \boldsymbol{j}\in \mathcal{C}_{d-1}.
\end{equation} 
\end{enumerate}   
Below, we will use the expressions \eqref{eqn:GeneralCase-rounding} and \eqref{eqn:GeneralCase-tensor} for $\mathcal{C}_{d}$. 
Using \eqref{eqn:Case1-parallel-y-axis-matrix-A-dxd}, we may write
\[
S := H [0,1]^d = \left\{ (\omega_1, \boldsymbol{\omega}_2) \;:\; 0 \leq \omega_1 \leq a , \;\;  \boldsymbol{\omega}_2\in \tfrac{\omega_1}{a} \mathbf{c}+H'[0,1]^{d-1} \right\} . 
\]


\noindent
\textbf{Completeness}. \
To show that $\mathcal{E}(\mathcal{C}_d)$ is complete in $L^2(S)$, we suppose that $f \in PW(S)$ satisfies $f(\gamma) = 0$ for all $\gamma \in \mathcal{C}_d$ and show that $f = 0$.

Note that since $f \in PW(S)$, its Fourier transform $F := \widehat{f} \in L^2(\R^d)$ is supported in $S \subseteq \R^d$. 
The set $S$ is bounded, so the function $f$ is continuous
and the following equation holds for all  
$x\in\R, \; \boldsymbol{y}\in\R^{d-1}$: 
\[
\begin{split}
f(x,y)
&= \iint F(\omega_1,\boldsymbol{\omega}_2) \, e^{2 \pi i (x \omega_1 + \boldsymbol{y}\cdot \boldsymbol{\omega}_2)} \, d\omega_1 \, d\boldsymbol{\omega}_2 \\
&= \int_{\R} \underbrace{\left( \int_{\R^{d-1}} F(\omega_1,\boldsymbol{\omega}_2) \, e^{2 \pi i \boldsymbol{y}\cdot \boldsymbol{\omega}_2)} \, d\boldsymbol{\omega}_2 \right) }_{=: P_{\boldsymbol{y}} F (\omega_1)} \, e^{2 \pi i x \omega_1} \, d\omega_1 \\
&= \mathcal{F}^{-1} [P_{\boldsymbol{y}} F] (x) ,
\end{split}
\]
where $\mathcal{F}$ denotes the $1$-dimensional Fourier transform and
$F \mapsto P_{\boldsymbol{y}} F$ is the linear mapping defined by $P_{\boldsymbol{y}} F (\omega_1) := \int_{\R^{d-1}} F(\omega_1,\boldsymbol{\omega}_2) \, e^{2 \pi i \boldsymbol{y}\cdot \boldsymbol{\omega}_2} \, d\boldsymbol{\omega}_2$.
Since $\supp F \subseteq S \subseteq [0,a] \times \R^{d-1}$, we have $\supp P_{\boldsymbol{y}} F \subseteq [0,a]$, so that $\mathcal{F}^{-1} [P_{\boldsymbol{y}} F] \in PW[0,a]$.
Recall that for each $\boldsymbol{j} \in \mathcal{C}_{d-1}$, the set $\mathcal{X}_{\boldsymbol{j}}$ is a set of uniqueness for $PW[0,a]$.
Since $f(\cdot,\boldsymbol{j}) = \mathcal{F}^{-1} [P_{\boldsymbol{j}} F] \in PW[0,a]$ and since $f(x,\boldsymbol{j}) = 0$ for all $x \in \mathcal{X}_{\boldsymbol{j}}$, we deduce that $f(\cdot,\boldsymbol{j}) = 0$, equivalently, $P_{\boldsymbol{j}} F = 0$.
Therefore, we have $P_{\boldsymbol{j}} F = 0$ for all $\boldsymbol{j} \in \mathcal{C}_{d-1}$.

Now, it holds for all $\boldsymbol{j} \in \mathcal{C}_{d-1}$ and $\omega_1 \in \R$ that
\[
0 = P_{\boldsymbol{j}} F (\omega_1)
:= \int_{\R^{d-1}} F(\omega_1,\boldsymbol{\omega}_2) \, e^{2 \pi i \boldsymbol{j}\cdot\boldsymbol{\omega}_2} \, d\boldsymbol{\omega}_2
= \mathcal{F}^{-1}_{d-1} [F(\omega_1,\cdot)] (\boldsymbol{j}) ,
\]
where $\mathcal{F}_{d-1}$ denotes the $(d{-}1)$-dimensional Fourier transform. 
Fix any $\omega_1 \in [0,a]$. Then
$\supp F(\omega_1,\cdot) \subseteq \frac{\omega_1}{a} \mathbf{c}+H'[0,1]^{d-1}$ by the geometry of $S$, which implies that $\mathcal{F}^{-1}_{d-1} [F(\omega_1,\cdot)] \in PW(\frac{\omega_1}{a} \mathbf{c}+H'[0,1]^{d-1})$.
Since $\mathcal{C}_{d-1} \subseteq \Z^{d-1}$ is a set of uniqueness for $PW(I)$ for any shift of $H'[0,1]^{d-1}$, we deduce that $\mathcal{F}^{-1}_{d-1} [F(\omega_1,\cdot)] = 0$ or equivalently, $F(\omega_1,\cdot) = 0$.
That is, $F(\omega_1,\cdot) = 0$ for all $\omega_1 \in [0,a]$.
Since $\supp F \subseteq S \subseteq [0,a] \times \R^{d-1}$, we conclude that $F = 0$ and in turn, $f = 0$.

\medskip

\noindent
\textbf{Riesz sequence}. \
One can easily see that the `Completeness' part above is only a minor modification of the `Completeness' part in the proof of Proposition~\ref{prop:base-case-d2}. The `Riesz sequence' part follows by a similar modification of the corresponding part in the proof of Proposition~\ref{prop:base-case-d2}. 
\end{proof}

\section{Proof of Theorem \ref{thm:limiting-spectral-norm}}
\label{sec:proof-main1}

As seen in Proposition~\ref{prop:Fuglede-for-lattices}, for any $A \in \mathrm{GL} (d,\R)$ the system $\mathcal{E}(A^{-T} \, \Z^d)$ is an orthogonal basis for $L^2(A [0,1]^d)$.
To prove Theorem \ref{thm:limiting-spectral-norm}, we will first derive a perturbation theorem for this orthogonal basis by using Theorem \ref{thm:Bailey-Cor6-1}.

\begin{theorem}\label{thm:Bailey-Cor6-1-generalization}
Let $A \in \mathrm{GL} (d,\R)$ and $0 < L < \frac{\ln(2)}{\pi d}$.
For each $\bn \in \Z^d$, let
\begin{equation}\label{eqn:blambda-in-neighborhood-of-dual}
\blambda_{\bn}
\;\;\in\;
A^{-T} \, \bn
+
A^{-T} \, [-L,L]^d .
\end{equation}
Then $\mathcal{E}( \{ \blambda_{\bn} : \bn \in \Z^d \} )$ is a Riesz basis for $L^2(A [0,1]^d)$.
\end{theorem}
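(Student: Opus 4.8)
The plan is to reduce the assertion to Bailey's theorem on the unit cube (Theorem~\ref{thm:Bailey-Cor6-1}) by a single linear change of variables, exploiting the fact that the perturbation box $A^{-T}[-L,L]^d$ appearing in \eqref{eqn:blambda-in-neighborhood-of-dual} is exactly the image under $A^{-T}$ of the cube perturbation box $[-L,L]^d$ centered at the integer lattice.

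First I would rewrite the hypothesis \eqref{eqn:blambda-in-neighborhood-of-dual} in a form that decouples the matrix $A^{-T}$ from the perturbation. For each $\bn \in \Z^d$ write $\blambda_{\bn} = A^{-T}(\bn + \bv_{\bn})$ with $\bv_{\bn} \in [-L,L]^d$, and set $\bgamma_{\bn} := A^T \blambda_{\bn} = \bn + \bv_{\bn}$. Then $\bgamma_{\bn}$ is a pointwise perturbation of $\bn$ satisfying
\[
\sup_{\bn \in \Z^d} \| \bgamma_{\bn} - \bn \|_{\infty}
= \sup_{\bn \in \Z^d} \| \bv_{\bn} \|_{\infty}
\;\leq\; L
\;<\; \tfrac{\ln(2)}{\pi d} ,
\]
which is precisely condition \eqref{eqn:Bailey-Cor6-1-condition-ln2-over-pid}. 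Hence Theorem~\ref{thm:Bailey-Cor6-1} applies and yields that $\mathcal{E}(\{\bgamma_{\bn} : \bn \in \Z^d\})$ is a Riesz basis for $L^2[0,1]^d$. It then remains to transport this Riesz basis from the cube to the parallelepiped $A[0,1]^d$, for which I would invoke Lemma~\ref{lem:RB-basic-operations}({\romannumeral 3}) with the matrix $A^{-T}$: since $\mathcal{E}(\{\bgamma_{\bn}\})$ is a Riesz basis for $L^2[0,1]^d$, the system $\mathcal{E}(A^{-T}\{\bgamma_{\bn}\})$ is a Riesz basis for $L^2((A^{-T})^{-T}[0,1]^d) = L^2(A[0,1]^d)$. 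Finally $A^{-T}\bgamma_{\bn} = A^{-T}(\bn + \bv_{\bn}) = \blambda_{\bn}$, so the transported spectrum is exactly $\{\blambda_{\bn} : \bn \in \Z^d\}$, completing the argument.

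There is no deep obstacle here; the argument is essentially a bookkeeping exercise, and the only point requiring genuine care is to keep the transpose/inverse directions consistent, so that the perturbation condition lands exactly on the $\ell^\infty$-threshold $\frac{\ln(2)}{\pi d}$ demanded by Theorem~\ref{thm:Bailey-Cor6-1} and so that the change of variables in Lemma~\ref{lem:RB-basic-operations}({\romannumeral 3}) reproduces $A[0,1]^d$ rather than, say, $A^{T}[0,1]^d$ or $A^{-1}[0,1]^d$. The substantive content---that an $\ell^\infty$-perturbation of $\Z^d$ up to $\frac{\ln(2)}{\pi d}$ remains a Riesz basis for the cube---is already carried by Theorem~\ref{thm:Bailey-Cor6-1}, and the present statement is just its $A^{-T}$-affine image.
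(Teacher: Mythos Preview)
Your proposal is correct and follows essentially the same route as the paper: both define $\bgamma_{\bn} = A^T \blambda_{\bn} \in \bn + [-L,L]^d$, apply Theorem~\ref{thm:Bailey-Cor6-1} on the unit cube, and then transport the Riesz basis to $L^2(A[0,1]^d)$ via the linear change of variables $x \mapsto A^{-1}x$. The only cosmetic difference is that you invoke Lemma~\ref{lem:RB-basic-operations}({\romannumeral 3}) for the transport step, whereas the paper writes out the bijective bounded operator $F(x) \mapsto F(A^{-1}x)$ explicitly.
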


\begin{proof}
From \eqref{eqn:blambda-in-neighborhood-of-dual}, we have $\btau_{\bn} := A^T \, \blambda_{\bn} \in \bn + [-L,L]^d$  for $\bn \in \Z^d$.
Then $\mathcal{E}( \{ \btau_{\bn} : \bn \in \Z^d \} )$ is a Riesz basis for $L^2[0,1]^d$ by Theorem \ref{thm:Bailey-Cor6-1}.
Note that since $A \in \mathrm{GL} (d,\R)$, the map $L^2[0,1]^d \rightarrow L^2(A [0,1]^d)$, $F(x) \mapsto F(A^{-1} x)$ is a bijective bounded operator.
For each $\bn \in \Z^d$, the operator maps the function $e^{2 \pi i \btau_{\bn} \cdot (\cdot)}$ to $e^{2 \pi i \btau_{\bn} \cdot A^{-1} (\cdot)} = e^{2 \pi i \btau_{\bn}^T A^{-1} (\cdot)} = e^{2 \pi i (A^{-T} \btau_{\bn})^T (\cdot)} = e^{2 \pi i \blambda_{\bn}^T (\cdot)}  = e^{2 \pi i \blambda_{\bn} \cdot (\cdot)}$. Therefore, $\mathcal{E}( \{ \blambda_{\bn} : \bn \in \Z^d \} )$ is a Riesz basis for $L^2(A [0,1]^d)$.
\end{proof}

Clearly, Theorem \ref{thm:Bailey-Cor6-1} is a special case of Theorem \ref{thm:Bailey-Cor6-1-generalization} with $A = I_d$ (the $d {\times} d$ identity matrix).

As a second ingredient for proving Theorem \ref{thm:limiting-spectral-norm}, we need the following lemma.
For a $d {\times} d$ matrix $A \in \R^{d \times d}$, let 
$\|A\|_2 := \sup_{\mathbf{x} \in \R^d , \, \| \mathbf{x} \|_2 \leq 1}   \| A \mathbf{x} \|_2$
be the usual spectral norm of $A$.
For $r > 0$, let $B_{r} (0) := \{ \bomega \in \R^d : \| \bomega \|_2 < r \}$ be the open ball in $\R^d$ with radius $r$ centered at the origin, and let $\overline{B}_{r} (0) = \{ \bomega \in \R^d : \| \bomega \|_2 \leq r \}$ be the closure of $B_{r} (0)$.  

\begin{lemma}\label{lem:singular-value}
For any $A \in \R^{d \times d}$ and $r > 0$,
we have $\|A\|_2 \leq r$ if and only if $A^T \left[ \overline{B}_{1} (0) \right] \subseteq \overline{B}_{r} (0)$.
\end{lemma}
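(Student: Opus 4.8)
The plan is to reduce the geometric set-inclusion statement to a statement about the spectral norm of $A^T$, and then to invoke the transpose-invariance $\|A\|_2 = \|A^T\|_2$ of the spectral norm.

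First I would unwind the definition of the closed balls. By definition of $\overline{B}_1(0)$ and $\overline{B}_r(0)$, the inclusion $A^T \big[ \overline{B}_1(0) \big] \subseteq \overline{B}_r(0)$ means precisely that $\| A^T \mathbf{u} \|_2 \leq r$ for every $\mathbf{u} \in \R^d$ with $\| \mathbf{u} \|_2 \leq 1$. Since $r$ is an upper bound for all the quantities $\| A^T \mathbf{u} \|_2$ (as $\mathbf{u}$ ranges over the closed unit ball) if and only if it is an upper bound for their least upper bound, this inclusion is equivalent to $\sup_{\| \mathbf{u} \|_2 \leq 1} \| A^T \mathbf{u} \|_2 \leq r$, which is exactly $\| A^T \|_2 \leq r$ by the definition of the spectral norm recalled just before the lemma. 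No compactness or attainment argument is needed here; the equivalence is immediate from the meaning of the supremum.

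Next I would use the standard fact that a square matrix and its transpose have the same spectral norm, $\| A \|_2 = \| A^T \|_2$. This holds because $\| A \|_2$ equals the largest singular value of $A$, and $A$ and $A^T$ share the same singular values; equivalently, $\| A \|_2^2 = \lambda_{\max}(A^T A) = \lambda_{\max}(A A^T) = \| A^T \|_2^2$, using that the nonzero eigenvalues of $A^T A$ and $A A^T$ coincide. Chaining the two steps gives $A^T \big[ \overline{B}_1(0) \big] \subseteq \overline{B}_r(0) \iff \| A^T \|_2 \leq r \iff \| A \|_2 \leq r$, which is the assertion of the lemma.

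I do not anticipate any substantive obstacle: the lemma is a direct unpacking of the definition of the operator norm combined with transpose-invariance of the spectral norm. The only point deserving a line of care is the first equivalence, where one should note that \emph{both} implications are needed, and that each is simply the statement that a number $r$ dominates a family of nonnegative reals exactly when it dominates their supremum.
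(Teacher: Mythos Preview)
Your argument is correct and follows essentially the same approach as the paper: both use the transpose-invariance $\|A\|_2 = \|A^T\|_2$ and then unpack the set inclusion as the statement $\sup_{\|\mathbf{u}\|_2 \leq 1} \|A^T \mathbf{u}\|_2 \leq r$. The paper phrases the biconditional by noting $A^T[\overline{B}_1(0)] \subseteq \overline{B}_{\|A\|_2}(0)$ and $A^T[\overline{B}_1(0)] \not\subseteq \overline{B}_s(0)$ for $s < \|A\|_2$, while you go directly through the supremum characterization, but the content is the same.
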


\begin{proof}
Since
\[
\|A\|_2 = \|A^T\|_2 = \sup_{\mathbf{x} \in \R^d , \; \| \mathbf{x} \|_2 \leq 1}   \| A^T \mathbf{x} \|_2 ,
\]
we have
$A^T \left[ \overline{B}_{1} (0) \right] \subseteq \overline{B}_{\|A\|_2} (0)$ and $A^T \left[ \overline{B}_{1} (0) \right] \not \subseteq \overline{B}_{s} (0)$ for all $0 < s < \|A\|_2$.

If $\|A\|_2 \leq r$, then $A^T \left[ \overline{B}_{1} (0) \right] \subseteq \overline{B}_{\|A\|_2} (0) \subseteq \overline{B}_{r} (0)$. Conversely, if $A^T \left[ \overline{B}_{1} (0) \right] \subseteq \overline{B}_{r} (0)$ for some $r > 0$, then necessarily $r \geq \|A\|_2$. 
\end{proof}

We are now ready to prove Theorem \ref{thm:limiting-spectral-norm}.

\begin{proof}[Proof of Theorem \ref{thm:limiting-spectral-norm}]
Choose any $r \in \R$ with $\|A\|_2 \leq r < \frac{2\ln(2)}{\pi d^{3/2}}$. Then Lemma \ref{lem:singular-value} shows that
\[
A^T \left[ \overline{B}_{1} (0) \right]
\;\subset\;  \overline{B}_{r} (0)
\;\subset\;  \left[-r, r \right]^d
\]
and since the map $\mathbf{x} \mapsto A^T \mathbf{x}$ is linear we get
\[
A^T \left[ \overline{B}_{\sqrt{d}/2} (0) \right]
=
A^T \left[ \tfrac{\sqrt{d}}{2} \, \overline{B}_{1} (0) \right]
\;\subset\; \tfrac{\sqrt{d}}{2} \, \overline{B}_{r} (0)
\;\subset\;  \left[-\tfrac{r\sqrt{d}}{2}, \tfrac{r\sqrt{d}}{2} \right]^d .
\]
By setting $L := \frac{r\sqrt{d}}{2}  \; ( < \frac{\ln(2)}{\pi d} )$, we have
\[
A^T \left[ \overline{B}_{\sqrt{d}/2} (0) \right] \subseteq [-L,L]^d
\]
or equivalently,
\begin{equation}
\label{eqn:Bailey-Cor6-1-generalization-main-condition}
\overline{B}_{\sqrt{d}/2} (0) \subseteq A^{-T} \, [-L,L]^d .
\end{equation}
Note that the distance of an arbitrary point $\mathbf{x} \in \R^d$ to the integer lattice $\Z^d$ is at most $\sqrt{d}/2$.
If an element $A^{-T} \bn \in \Lambda^*$ with $\bn \in \Z^d$ is rounded to its closest point $\blambda_{\bn} \in \Z^d$, then necessarily $\| \blambda_{\bn} - A^{-T} \bn \|_2 \leq \sqrt{d}/2$, that is, $\blambda_{\bn} \in A^{-T} \bn + \overline{B}_{\sqrt{d}/2} (0)$.
Combining with \eqref{eqn:Bailey-Cor6-1-generalization-main-condition}, this shows that \eqref{eqn:blambda-in-neighborhood-of-dual} holds for all $\bn \in \Z^d$.
Hence, $\mathcal{E}(\mathcal{C}) = \mathcal{E}( \{ \blambda_{\bn} : \bn \in \Z^d \} )$ is a Riesz basis for $L^2(A [0,1]^d)$ by Theorem \ref{thm:Bailey-Cor6-1-generalization}.
\end{proof}

\backmatter

%
%
%

\bmhead{Acknowledgments}

D.G.~Lee is supported by the National Research Foundation of Korea (NRF) grant funded by the Korean government (MSIT) (RS-2023-00275360).
G.E.~Pfander acknowledges support by the German Research Foundation (DFG) Grant PF 450/11-1.

%

\section*{Declarations}

\subsection*{Funding}
This work was supported by the National Research Foundation of Korea (NRF) grant funded by the Korean government (MSIT) (RS-2023-00275360), and the German Research Foundation (DFG) Grant PF 450/11-1.

\subsection*{Conflict of interest}

On behalf of all authors, the corresponding author states that there is no conflict of interest.

\subsection*{Availability of data and materials}

Not applicable. 

\subsection*{Authors' contributions}
All authors contributed equally to this work. All authors are first authors, and the authors' names are listed in alphabetical order.

%
%
%
%
%
%
%

\begin{appendices}

\section{Proof of Theorem~\ref{thm:tensor-product-Kadec-Avdonin}}
\label{sec:proof-thm:tensor-product-Kadec-Avdonin}

The proof is based on the observation that $e^{2 \pi i \btau \cdot \bomega} = e^{2 \pi i \tau^{(1)} \omega^{(1)}} \cdot \ldots \cdot e^{2 \pi i \tau^{(d)} \omega^{(d)}}$ for $\btau = (\tau^{(1)}, \ldots, \tau^{(d)}) , \, \bomega = (\omega^{(1)}, \ldots, \omega^{(d)}) \in \R^d$.
For simplicity, we will only consider the case $d=2$ with both sets $\Gamma^{(1)}$ and $\Gamma^{(2)}$ satisfying \eqref{eqn:Kadec-condition}.

By Proposition~\ref{prop:Kadec-1-4} (Kadec's $\frac{1}{4}$-theorem), the system $\mathcal{E}(\Gamma^{(1)})$ (resp.~$\mathcal{E}(\Gamma^{(2)})$) is a Riesz basis for $L^2[0,1]$ with bounds $(1 - B(L^{(1)}))^2$ and $(1 + B(L^{(1)}))^2$ (resp.~with bounds $(1 - B(L^{(2)}))^2$ and $(1 + B(L^{(2)}))^2$).
For any finitely supported complex-valued sequence $\{ c_{(m, n)} : (m, n) \in \Z^2 \}$, we have
\begin{equation}\label{eqn:SZ99-RS-inequality-derivation1}
\begin{split}
& \Big\| \sum_{(m,n) \in \Z^2} c_{(m, n)} \, e^{2 \pi i (\tau_{m}^{(1)},\tau_{n}^{(2)}) \cdot (\cdot)} \Big\|_{L^2[0,1]^2}^2 \\
&=
\int_0^1 \int_0^1  \Big| \sum_{m \in \Z} \sum_{n \in \Z} c_{(m, n)} \, e^{2 \pi i (\tau_{m}^{(1)} \omega^{(1)} + \tau_{n}^{(2)} \omega^{(2)})} \Big|^2 \, d\omega^{(1)} \, d\omega^{(2)} \\
&=
\int_0^1 \left( \int_0^1  \Big| \sum_{m \in \Z} \Big( \sum_{n \in \Z} c_{(m, n)} \, e^{2 \pi i \tau_{n}^{(2)} \omega^{(2)}} \Big) \, e^{2 \pi i \tau_{m}^{(1)} \omega^{(1)}} \Big|^2 \, d\omega^{(1)} \right) \, d\omega^{(2)} \\
&\geq
\int_0^1 \left( (1 - B(L^{(1)}))^2 \, \sum_{m \in \Z} \Big| \sum_{n \in \Z} c_{(m, n)} \, e^{2 \pi i \tau_{n}^{(2)} \omega^{(2)}} \Big|^2 \right) \, d\omega^{(2)} \\
&\qquad =
(1 - B(L^{(1)}))^2 \, \sum_{m \in \Z} \int_0^1 \Big| \sum_{n \in \Z} c_{(m, n)} \, e^{2 \pi i \tau_{n}^{(2)} \omega^{(2)}} \Big|^2 \, d\omega^{(2)} ,
\end{split}
\end{equation}
where we used that for any fixed $\omega^{(2)} \in [0,1]$, the sequence $\{ \sum_{n \in \Z} c_{(m, n)} \, e^{2 \pi i \tau_{n}^{(2)} \omega^{(2)}} : m \in \Z \}$ is finitely supported, and that $\mathcal{E}(\Gamma^{(1)}) = \mathcal{E}(\{ \tau_m^{(1)} \}_{m \in \Z})$ is a Riesz basis for $L^2[0,1]$ with lower bound $(1 - B(L^{(1)}))^2$.
On the other hand, using that $\mathcal{E}(\Gamma^{(2)}) = \mathcal{E}(\{ \tau_n^{(2)} \}_{n \in \Z})$ is a Riesz basis for $L^2[0,1]$ with lower bound $(1 - B(L^{(2)}))^2$, we have for each $m \in \Z$,
\begin{equation}\label{eqn:SZ99-RS-inequality-derivation2}
\int_0^1 \Big| \sum_{n \in \Z} c_{(m, n)} \, e^{2 \pi i \tau_{n}^{(2)} \omega^{(2)}} \Big|^2 \, d\omega^{(2)}
\geq
(1 - B(L^{(2)}))^2 \, \sum_{n \in \Z} |c_{(m, n)}|^2 .
\end{equation}
Thus, combining \eqref{eqn:SZ99-RS-inequality-derivation1} and \eqref{eqn:SZ99-RS-inequality-derivation2} gives
\[
\begin{split}
& \Big\| \sum_{(m,n) \in \Z^2} c_{(m, n)} \, e^{2 \pi i (\tau_{m}^{(1)},\tau_{n}^{(2)}) \cdot (\cdot)} \Big\|_{L^2[0,1]^2}^2  \\
&\qquad\qquad \;\geq\;
(1 - B(L^{(1)}))^2 \, (1 - B(L^{(2)}))^2 \, \sum_{m \in \Z} \sum_{n \in \Z} |c_{(m, n)}|^2  ,
\end{split}
\]
establishing the lower Riesz inequality of $\mathcal{E}( \Gamma^{(1)} {\times} \Gamma^{(2)})$ for $L^2[0,1]^2$. The upper Riesz inequality is obtained similarly:
\[
\begin{split}
& \Big\| \sum_{(m,n) \in \Z^2} c_{(m, n)} \, e^{2 \pi i (\tau_{m}^{(1)},\tau_{n}^{(2)}) \cdot (\cdot)} \Big\|_{L^2[0,1]^2}^2  \\
&\qquad\qquad \;\leq\;
(1 + B(L^{(1)}))^2 \, (1 + B(L^{(2)}))^2 \, \sum_{m \in \Z} \sum_{n \in \Z} |c_{(m, n)}|^2  .
\end{split}
\]
Note that since $\mathcal{E}(\Gamma^{(1)})$ and $\mathcal{E}(\Gamma^{(2)})$ are complete in $L^2[0,1]$, their tensor product $\mathcal{E}( \Gamma^{(1)} {\times} \Gamma^{(2)}) = \mathcal{E}(\Gamma^{(1)}) \otimes \mathcal{E}(\Gamma^{(2)})$ is complete in $L^2[0,1]^2$.
Hence, we conclude that $\mathcal{E}( \Gamma^{(1)} {\times} \Gamma^{(2)})$ is a Riesz basis for $L^2[0,1]^2$ with bounds $(1 - B(L^{(1)}))^2 \, (1 - B(L^{(2)}))^2$ and $(1 + B(L^{(1)}))^2 \, (1 + B(L^{(2)}))^2$ (see e.g., \cite[p.27, Theorem 9]{Yo01}).

The same arguments can be used to prove the case where some sets $\Gamma^{(k)}$ satisfy \eqref{eqn:Avdonin-condition} instead of \eqref{eqn:Kadec-condition}; the only difference in this case is that explicit Riesz bounds cannot be obtained. 
\hfill $\Box$ 

\section{Proof of Lemma~\ref{lem:RB-basic-operations}}
\label{sec:proof-lem:RB-basic-operations}

To prove ({\romannumeral 1}), fix any $a \in \R^d$ and consider the translation operator $F(x) \mapsto F(x+a)$ which we denote by $T_{-a}$. 
This is a unitary operator from $L^2(S)$ onto $L^2(S + a)$, and we have 
$T_{-a} [ \mathcal{E}(\Lambda) ] = \{ e^{2\pi i \lambda \cdot (x+a)} : \lambda \in \Lambda \}
= \{ e^{2\pi i \lambda \cdot a} \, e^{2\pi i \lambda \cdot x} : \lambda \in \Lambda \}$. 
Since $e^{2\pi i \lambda \cdot a} \in \C$ are just some phase factors lying on the unit circle in $\C$, it follows that $\mathcal{E}(\Lambda)$ is a Riesz basis for $L^2(S + a)$ with bounds $\alpha$ and $\beta$.

Part ({\romannumeral 2}) follows similarly by noting that the modulation $F(x) \mapsto e^{2 \pi i b \cdot x} F(x)$ is a unitary operator on $L^2(S)$.

To prove ({\romannumeral 3}), fix any $A \in \mathrm{GL} (d,\R)$ and assume that 
$\mathcal{E}(\Gamma)$ is a Riesz basis for $L^2(S)$ with bounds $0 < \alpha \leq \beta < \infty$, where $\Gamma \subseteq \R^d$ is a discrete set and $S \subseteq \R^d$ is a measurable set. 
Then for any $\{ c_\gamma \}_{\gamma \in \Gamma} \in \ell_2 (\Gamma)$, we have
\[
\begin{split}
\Big\| \sum_{\bgamma \in \Gamma} c_{\bgamma} \, e^{2 \pi i A \bgamma \cdot \bx} \Big\|_{L^2(A^{-T} S)}^2 
&= \int_{A^{-T} S}  \, \Big| \sum_{\bgamma \in \Gamma} c_{\bgamma} \, e^{2 \pi i A \bgamma \cdot \bx} \Big|^2 \, d \bx \\
&= \int_{A^{-T} S}  \, \Big| \sum_{\bgamma \in \Gamma} c_{\bgamma} \, e^{2 \pi i \bgamma \cdot A^T \bx} \Big|^2 \, d \bx \\
&=  \int_{S}  \, \Big| \sum_{\bgamma \in \Gamma} c_{\bgamma} \, e^{2 \pi i \bgamma \cdot \bz} \Big|^2 \, | \! \det (A^{-T}) | \, d \bz  \quad \big( \bx = A^{-T} \bz \big) \\
&=  \frac{1}{| \! \det (A) |}  \, \int_{S}  \, \Big| \sum_{\bgamma \in \Gamma} c_{\bgamma} \, e^{2 \pi i \bgamma \cdot \bz} \Big|^2 \, d \bz   \\
&=  \frac{1}{| \! \det (A) |}  \, 
\Big\| \sum_{\bgamma \in \Gamma} c_{\bgamma} \, e^{2 \pi i \bgamma \cdot \bz}\Big\|_{L^2(S)}^2 
\end{split}
\]
where 
\[
\alpha \, \sum_{\gamma \in \Gamma} | c_\gamma |^2
\;\leq\;
\Big\| \sum_{\bgamma \in \Gamma} c_{\bgamma} \, e^{2 \pi i \bgamma \cdot \bz}\Big\|_{L^2(S)}^2
\;\leq\;
\beta \, \sum_{\gamma \in \Gamma} | c_\gamma |^2 . 
\]
This implies that $\mathcal{E}(A \Gamma)$ is a Riesz basis for $L^2(A^{-T} S)$ with bounds $\frac{\alpha}{| \! \det (A) |} $ and $\frac{\beta}{| \! \det (A) |} $.
In particular, if $A = c I_d$ for some $c > 0$, then $\mathcal{E}(c \Gamma)$ is a Riesz basis for $L^2(\frac{1}{c} S)$ with bounds $c^{-d} \alpha$ and $c^{-d} \beta$. 
\hfill $\Box$ 

\section{Proof of Lemma \ref{lem:a-irrational-Beatty-Fraenkel}}
\label{sec:proof-lem:a-irrational-Beatty-Fraenkel}

Lindner \cite{Li99} obtained an explicit lower Riesz bound for Avdonin's theorem (Proposition~\ref{prop:Avdonin-1-4-in-the-mean}).

\begin{proposition}[Theorem 1 in \cite{Li99}]
\label{prop:thm1-in-Li99}
Let $\{ \tau_n = n + \delta_n \}_{n \in \Z} \subseteq \R$ be a sequence with $\inf \{ |\tau_n - \tau_{n'}| : n,n'\in\Z ,  \; n \neq n' \} \geq \delta > 0$ and $\sup_{n \in \Z} | \delta_n | \leq B$.
Assume that there exists an integer $P \in \N$ satisfying the Avdonin $\frac{1}{4}$-condition \eqref{eqn:Avdonin-condition}, i.e.,
\[
\sup_{m \in \Z} \;
\tfrac{1}{P}
\Big| \sum_{k=mN}^{(m+1)P-1} \delta_k \Big|
\;\leq\; L
\;<\; \tfrac{1}{4} .
\]
Then $\mathcal{E}( \{ \tau_n \}_{n \in \Z} )$ is a Riesz basis for $L^2[0,1]$ with lower Riesz bound given by
\[
A(B,\delta,L,P) := \exp \big( - 20 \pi^2 (2\widetilde{B})^{2\widetilde{P}} / \widetilde{P}^2 \big) \cdot \big( \tfrac{\widetilde{\delta}}{9\widetilde{B}} \big)^{240 (2\widetilde{B})^{\widetilde{P}}} ,
\]
where
\[
\widetilde{P} := P \cdot \lceil \tfrac{1}{P} \cdot \tfrac{2(4B+2)^2}{1/4-L} \rceil,
\quad
\widetilde{B} := \tfrac{3}{2} + 2(3B+1) ,
\quad
\widetilde{\delta} := \tfrac{1}{2} \big( \tfrac{1}{4} - L \big) \delta .
\]
\end{proposition}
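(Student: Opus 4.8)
The statement is a purely \emph{quantitative} refinement of Avdonin's theorem (Proposition~\ref{prop:Avdonin-1-4-in-the-mean}): the hypotheses are exactly Avdonin's separated ``$\tfrac14$ in the mean'' condition \eqref{eqn:Avdonin-condition}, and the conclusion that $\mathcal{E}(\{\tau_n\}_{n\in\Z})$ is a Riesz basis for $L^2[0,1]$ is already provided by that proposition. Hence I may borrow completeness and the mere existence of positive Riesz bounds from Proposition~\ref{prop:Avdonin-1-4-in-the-mean}, and the \emph{entire} remaining task is to exhibit the explicit lower bound $A(B,\delta,L,P)$. The plan is therefore to re-run a proof of Avdonin's theorem while tracking every constant. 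Since $\{\tau_n\}$ is separated ($\geq\delta$) and $[0,1]$ is bounded, the Bessel (upper) bound is automatic, so I only need a lower $\ell_2$ estimate for the synthesis operator $(c_n)\mapsto\sum_n c_n\,e^{2\pi i\tau_n(\cdot)}$. Comparing with the orthonormal basis $\{e^{2\pi i n(\cdot)}\}$ through
\[
e^{2\pi i \tau_n x} = e^{2\pi i n x}\,e^{2\pi i \delta_n x},
\]
the difficulty is immediate: individual $\delta_n$ may be as large as $B$, so the pointwise Kadec estimate behind Proposition~\ref{prop:Kadec-1-4} is unavailable, and only \emph{averages} of $\delta_n$ over blocks of length $P$ are small.

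\textbf{Exploiting the mean condition by passing to a coarser scale.} The natural route is to group $\Z$ into consecutive blocks and extract the cancellation encoded in \eqref{eqn:Avdonin-condition}. Within a block I would apply summation by parts, rewriting $\sum_{k}c_k(e^{2\pi i\delta_k x}-1)e^{2\pi i k x}$ in terms of the partial block sums $S_{m,j}=\sum_{k=mP}^{mP+j}\delta_k$; the full-block sum obeys $|S_{m,P-1}|\leq PL$ by \eqref{eqn:Avdonin-condition}, which is precisely the point at which the mean, rather than the pointwise size, governs the estimate. The renormalized quantities $\widetilde{P}$, $\widetilde{B}$, $\widetilde{\delta}$ in the statement are exactly the data of this coarsening: one enlarges the block length to $\widetilde{P}=P\lceil\tfrac1P\cdot\tfrac{2(4B+2)^2}{1/4-L}\rceil$ (a multiple of $P$, with the factor $(4B+2)^2/(1/4-L)$ chosen so that, on super-blocks of this length, a variance-type average of the $\delta_k$ falls \emph{strictly} below the Kadec $\tfrac14$ threshold), inflates the amplitude to $\widetilde{B}=\tfrac32+2(3B+1)$, and contracts the effective separation to $\widetilde{\delta}=\tfrac12(\tfrac14-L)\delta$. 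On this coarse scale the perturbation becomes Kadec-admissible, and a quantitative stability (Paley--Wiener/Neumann-series) argument converts ``below $\tfrac14$ on each super-block'' into a positive lower bound.

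\textbf{The finite-block core estimate and the shape of the constant.} I expect the genuinely hard content to be a single-super-block lower bound: a uniform estimate for the smallest singular value of the local exponential system $\{e^{2\pi i\tau_n(\cdot)}\}_{n\in\text{block}}$, depending only on the node separation $\geq\widetilde{\delta}$, the block diameter $\lesssim\widetilde{B}$, and the node count $\sim\widetilde{P}$. Such bounds come from below-estimating a Gram determinant (equivalently $\inf_{\R}|G|$ for the block generating function, an entire function of exponential type) by a Vandermonde/confluent product of pairwise gaps, after which the super-blocks are assembled via an almost-block-diagonal (near-orthogonality) argument. This mechanism matches the exotic form of $A(B,\delta,L,P)$: the scale $(2\widetilde{B})^{\widetilde{P}}$ is the count of objects that must be controlled in a block of $\widetilde{P}$ nodes of amplitude $2\widetilde{B}$; the factor $\exp\!\big(-20\pi^2(2\widetilde{B})^{2\widetilde{P}}/\widetilde{P}^2\big)$ has the Gaussian-in-type shape of a lower bound for the modulus of an exponential-type function off its zeros; and the power $(\widetilde{\delta}/9\widetilde{B})^{240(2\widetilde{B})^{\widetilde{P}}}$ is the accumulated pairwise-gap product.

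\textbf{Main obstacle.} The routine parts are the reduction to a synthesis lower bound, the summation-by-parts identity, and the final Neumann-series assembly. The delicate step — and the only place the precise numerical constants $20\pi^2$, $240$, $9$ can arise — is the finite-block estimate: proving a lower bound for the local Gram determinant (or $\inf|G|$) that is \emph{uniform over all admissible node configurations} and degrades only through $\widetilde{B},\widetilde{\delta},\widetilde{P}$, and then checking that the cross-block interactions degrade it by at most a bounded factor. Carrying this bookkeeping through cleanly, without losing the explicit dependence, is where essentially all the difficulty of Lindner's theorem lies.
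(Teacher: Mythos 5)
There is a genuine gap, and it is exactly the one you flag yourself at the end. First, note the baseline: the paper does not prove this proposition at all --- it is imported verbatim as Theorem~1 of Lindner \cite{Li99} and used as a black box in Appendix~\ref{sec:proof-lem:a-irrational-Beatty-Fraenkel} (its role there is precisely that the lower Riesz bound $A(B,\delta,L,P)$ depends only on the listed parameters, so that bounds uniform in the shift $\beta$ can be extracted). So the comparison is with Lindner's original argument, and against that standard your proposal is a road map rather than a proof. Your framing is correct: since $\{\tau_n\}$ is separated and $[0,1]$ is bounded, the Bessel bound is automatic, completeness and the qualitative Riesz basis property are already Proposition~\ref{prop:Avdonin-1-4-in-the-mean}, and hence the \emph{entire} content of the statement is the explicit formula for $A(B,\delta,L,P)$. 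But the proposal stops exactly where that content begins. The ``finite-block core estimate'' --- a lower bound for the smallest singular value (or Gram determinant) of a super-block of exponentials, uniform over admissible configurations, from which the constants $20\pi^2$, $240$, $9$ and the precise renormalizations $\widetilde{P}$, $\widetilde{B}$, $\widetilde{\delta}$ would have to emerge --- is described and motivated by shape-matching (``Gaussian-in-type'' factor, ``Vandermonde-like'' power), but never stated as a lemma, let alone proved. Pattern-matching the form of a constant to plausible mechanisms does not establish the inequality; for a quantitative theorem, deriving the stated quantity is the theorem.

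A second, subsidiary gap sits in the assembly step. You assert that super-blocks can be glued by an ``almost-block-diagonal (near-orthogonality) argument'' degrading the bound ``by at most a bounded factor,'' but exponentials supported on a common interval are not even approximately orthogonal across blocks of indices, and naive Neumann-series gluing of local lower bounds fails without a genuine structural input --- Avdonin-style regrouping of points, sine-type generating functions with explicit zero-set estimates, or a perturbation theorem with tracked constants, which is what Lindner actually supplies. Without carrying out both the core estimate and the gluing with explicit constants, there is no way to confirm that the specific expression $A(B,\delta,L,P) = \exp\bigl( - 20 \pi^2 (2\widetilde{B})^{2\widetilde{P}} / \widetilde{P}^2 \bigr) \cdot \bigl( \widetilde{\delta}/(9\widetilde{B}) \bigr)^{240 (2\widetilde{B})^{\widetilde{P}}}$ --- as opposed to \emph{some} positive constant depending on the same parameters --- is valid. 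In the context of this paper the honest options are two: cite \cite{Li99} as the authors do, or reconstruct Lindner's argument in full; the proposal does neither.
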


Using Lemma \ref{lem:RB-basic-operations},
we see that if $\alpha > 0$, $\beta \in \R$, and if $\{ \tau_n = \frac{n + \beta}{\alpha}  + \delta_n \}_{n \in \Z} \subseteq \R$ is a separated sequence with $\inf \{ |\tau_n - \tau_{n'}| : n,n'\in\Z ,  \; n \neq n' \} \geq 1$, $\sup_{n \in \Z} | \delta_n | \leq \frac{1}{2}$ and
\begin{equation}\label{eqn:Avdonin-condition-dilated-by-a}
\sup_{m \in \Z} \;
\tfrac{1}{P} \;
\Big| \sum_{k=mP}^{(m+1)P-1} \delta_k \Big|
\;\leq\; L
\;<\; \tfrac{1}{4\alpha}
\quad \text{for some} \;\; P \in \N,
\end{equation}
then $\mathcal{E}(\{ \tau_n \}_{n \in \Z}  )$ is a Riesz basis for $L^2[0,\alpha]$ with lower Riesz bound $A > 0$ \emph{depending only on $\alpha, L$ and $P$}.

\begin{proposition}[Theorem 26 in \cite{PRW21}]
\label{prop:thm26-in-PRW21}
For a piecewise continuous function $f : [0,1] \rightarrow \R$, an irrational number $\alpha \in \R$ and $\epsilon > 0$, there exists a number $P_0 = P_0 (f,\alpha,\epsilon) \in \N$ such that
\[
\Big|
\tfrac{1}{P} \sum_{k=mP}^{(m+1)P-1} f \Big( \tfrac{k+\beta}{\alpha} - \lfloor \tfrac{k+\beta}{\alpha} \rfloor \Big) - \int_0^1 f(t) \, dt
\Big|
< \epsilon
\]
for all integers $P \geq P_0$, $m \in \Z$ and $\beta \in \R$.
\end{proposition}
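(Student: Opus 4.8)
The plan is to prove this as a \emph{uniform} Weyl equidistribution statement for the arithmetic progression $c_k := \tfrac{k+\beta}{\alpha} - \lfloor \tfrac{k+\beta}{\alpha}\rfloor \in [0,1)$, $k = mP, \ldots, (m+1)P-1$, whose consecutive terms differ by the irrational amount $\tfrac{1}{\alpha}$ modulo $1$. The heart of the matter is a bound on the relevant exponential sums that is blind to the starting index, which is exactly what will force the resulting $P_0$ to be independent of $m$ and $\beta$.

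First I would settle the case of a single character $f(t) = e^{2\pi i h t}$ with $h \in \Z$. Since $h\lfloor \tfrac{k+\beta}{\alpha}\rfloor \in \Z$, we have $f(c_k) = e^{2\pi i h (k+\beta)/\alpha}$, so the average is a geometric sum,
\[
\tfrac{1}{P}\sum_{k=mP}^{(m+1)P-1} f(c_k) = \tfrac{1}{P}\, e^{2\pi i h(mP+\beta)/\alpha} \sum_{j=0}^{P-1} e^{2\pi i hj/\alpha} .
\]
For $h = 0$ this equals $1 = \int_0^1 f$; for $h \neq 0$ the irrationality of $\alpha$ gives $h/\alpha \notin \Z$, so summing the geometric series and using that the prefactor has modulus $1$ yields the bound $\tfrac{1}{P\,|\sin(\pi h/\alpha)|}$, which is \emph{independent of $m$ and $\beta$} and tends to $0$ as $P\to\infty$ (here $\int_0^1 f = 0$). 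By linearity, for any trigonometric polynomial $q(t)=\sum_{|h|\le K} a_h\, e^{2\pi i h t}$ one then obtains $\bigl|\tfrac1P\sum_k q(c_k) - \int_0^1 q\bigr| \le \tfrac{C_q}{P}$, where $C_q := \sum_{1\le|h|\le K} |a_h|/|\sin(\pi h/\alpha)|$ depends only on $\alpha$ and $q$.

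To pass from trigonometric polynomials to a general piecewise continuous $f$ I would use a sandwiching argument, which is necessary precisely because $f$ need not be continuous and so cannot be approximated uniformly by trigonometric polynomials. Writing $M := \sup|f|$ and letting $t_1,\ldots,t_r$ be the (finitely many) discontinuities, I would construct continuous functions $\phi \le f \le \psi$ on $[0,1]$ that agree with $f$ outside $\delta$-neighborhoods of the $t_i$ and are bounded by $M$ there, so that $\int_0^1(\psi-\phi) = O(Mr\delta)$; taking $\delta$ small makes this $<\epsilon/3$. Approximating $\phi$ and $\psi$ uniformly by trigonometric polynomials (e.g. via Fej\'er means) to within $\epsilon/6$ and applying the previous step, I get, for all $m,\beta$,
\[
\tfrac1P\sum_k f(c_k) \le \tfrac1P\sum_k \psi(c_k) \le \int_0^1 \psi + \tfrac{\epsilon}{3} + \tfrac{C}{P} \le \int_0^1 f + \tfrac{2\epsilon}{3} + \tfrac{C}{P},
\]
and symmetrically a lower bound using $\phi$. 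Choosing $P_0$ large enough to absorb the $C/P$ terms (note that $C$ depends only on $f,\alpha,\epsilon$, not on $m,\beta$) yields the two-sided estimate for all $P \ge P_0$, uniformly in $m$ and $\beta$.

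The main obstacle I anticipate is not the equidistribution itself but maintaining \emph{uniformity in the starting parameters $m$ and $\beta$} throughout, in the presence of the jumps of $f$: the geometric-sum computation is what guarantees the exponential-sum estimate is insensitive to the starting index, and the explicit continuous sandwich (rather than a direct uniform approximation of $f$) is what allows the argument to survive the discontinuities while keeping $P_0$ independent of $m$ and $\beta$. Minor care is also needed at the seam $t \equiv 0,1$, but since every $c_k$ lies in $[0,1)$ this is handled by treating $f$ periodically when forming $\phi$ and $\psi$.
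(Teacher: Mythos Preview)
The paper does not prove this proposition; it is quoted verbatim as Theorem~26 from \cite{PRW21} and used as a black box in Appendix~\ref{sec:proof-lem:a-irrational-Beatty-Fraenkel}. So there is no ``paper's own proof'' to compare against, and your proposal has to be judged on its own merits.

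Your argument is sound and is in fact the standard route to a uniform Weyl theorem. The decisive observation is the one you isolate: for $h\neq 0$ the geometric-sum bound $\bigl|\tfrac1P\sum e^{2\pi i h(k+\beta)/\alpha}\bigr|\le \tfrac{1}{P|\sin(\pi h/\alpha)|}$ carries no dependence on $m$ or $\beta$, which is precisely what forces $P_0$ to be uniform. The passage to piecewise continuous $f$ via a continuous sandwich $\phi\le f\le \psi$ (rather than direct uniform approximation) is the right move, since $f$ may jump and cannot itself be approximated uniformly by trigonometric polynomials. Two small points worth making explicit in a write-up: first, the Weierstrass/Fej\'er approximation of $\phi$ and $\psi$ requires them to be continuous on the \emph{circle}, so the potential mismatch at $t=0\equiv 1$ should be treated as one more discontinuity when building the sandwich (you note this); second, since $f$ is real-valued and your characters are complex, you should either pass to real trigonometric polynomials or take real parts when sandwiching---a purely cosmetic adjustment. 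With those caveats the proof goes through and delivers exactly the uniformity in $m$ and $\beta$ that the statement demands.
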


In particular, setting $f(t) = t$ yields the following corollary.

\begin{corollary}[Weyl--Khinchin equidistribution, see e.g., Theorem 17 in \cite{PRW21}]
\label{cor:thm17-in-PRW21}
For an irrational number $\alpha \in \R$ and $\epsilon > 0$, there exists a number $P_0 = P_0 (\alpha,\epsilon) \in \N$ such that
\begin{equation}\label{eqn:in-cor:thm17-in-PRW21}
\Big|
\tfrac{1}{P} \sum_{k=mP}^{(m+1)P-1} \tfrac{k+\beta}{\alpha} \; \mathrm{mod} \; 1 \;-\; \tfrac{1}{2}
\Big|
< \epsilon
\end{equation}
for all integers $P \geq P_0$, $m \in \Z$ and $\beta \in \R$.
\end{corollary}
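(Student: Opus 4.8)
The plan is to apply Proposition~\ref{prop:thm26-in-PRW21} directly with the specific test function $f(t) = t$ on $[0,1]$. First I would observe that $f(t) = t$ is continuous on $[0,1]$, hence in particular piecewise continuous, so the hypotheses of Proposition~\ref{prop:thm26-in-PRW21} are met for this choice of $f$ together with the given irrational $\alpha$ and the given $\epsilon > 0$. This yields an integer $P_0 = P_0(f,\alpha,\epsilon) \in \N$, and I would simply set $P_0(\alpha,\epsilon) := P_0(f,\alpha,\epsilon)$, noting that the dependence on $f$ has now been fixed.

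Next I would identify the two pieces appearing in the conclusion of the proposition for this $f$. Since $\lfloor x \rfloor$ is the greatest integer not exceeding $x$, the quantity $\tfrac{k+\beta}{\alpha} - \lfloor \tfrac{k+\beta}{\alpha}\rfloor$ is by definition the fractional part of $\tfrac{k+\beta}{\alpha}$, that is, $\tfrac{k+\beta}{\alpha} \bmod 1$. Hence $f\big(\tfrac{k+\beta}{\alpha} - \lfloor \tfrac{k+\beta}{\alpha}\rfloor\big) = \tfrac{k+\beta}{\alpha} \bmod 1$. On the other side, a one-line computation gives $\int_0^1 f(t)\, dt = \int_0^1 t\, dt = \tfrac{1}{2}$.

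Substituting these two identities into the inequality furnished by Proposition~\ref{prop:thm26-in-PRW21} turns it into exactly \eqref{eqn:in-cor:thm17-in-PRW21}, valid for all integers $P \geq P_0$, all $m \in \Z$ and all $\beta \in \R$, which is precisely the assertion of the corollary. I would present this as a two-sentence proof, the first recording admissibility of $f$ and the value of its integral, the second carrying out the substitution.

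The main, and essentially the only, obstacle has already been absorbed into Proposition~\ref{prop:thm26-in-PRW21}: the uniformity of $P_0$ in both the block index $m$ and the shift $\beta$ is the substantive content, and here it is inherited verbatim. Beyond that, the corollary is a pure specialization, so no additional estimation is required; one only has to verify that $f(t)=t$ is admissible and that its mean over $[0,1]$ equals $\tfrac12$.
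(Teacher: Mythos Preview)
Your proposal is correct and matches the paper's approach exactly: the paper derives the corollary by a single sentence, ``In particular, setting $f(t) = t$ yields the following corollary,'' which is precisely the specialization you carry out.
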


We are now ready to prove Lemma \ref{lem:a-irrational-Beatty-Fraenkel}.

Let $0 < \alpha < 1$ be an irrational number and let $\epsilon = \frac{1}{8 \alpha}$.
By Corollary \ref{cor:thm17-in-PRW21}, we obtain a number $P_0 \in \N$ for which \eqref{eqn:in-cor:thm17-in-PRW21} holds, that is, for any integer $P \geq P_0$, $m \in \Z$ and $\beta \in \R$,
\[
\tfrac{1}{P} \;
 \Big|
\sum_{k=mP}^{(m+1)P-1} \Big( \tfrac{k+\beta}{\alpha} - \lfloor \tfrac{k+\beta}{\alpha} \rfloor -  \tfrac{1}{2} \Big)
\Big|
=
 \Big|
\tfrac{1}{P} \sum_{k=mP}^{(m+1)P-1} \tfrac{k+\beta}{\alpha} \; \mathrm{mod} \; 1 \;-\; \tfrac{1}{2}
\Big|
\;<\;   \tfrac{1}{8\alpha} .
\]
Fix any integer $P \geq P_0$.
Then
\[
\sup_{\beta \in \R} \;
\sup_{m \in \Z} \;
\tfrac{1}{P} \;
 \Big|
\sum_{k=mP}^{(m+1)P-1}
\Big(  \lfloor \tfrac{k+\beta}{\alpha} \rfloor + \tfrac{1}{2} - \tfrac{k+\beta}{\alpha} \Big)
\Big|
\;\leq\; \tfrac{1}{8\alpha}
\;<\; \tfrac{1}{4\alpha} .
\]
Note that since $\alpha < 1$, the set $\lfloor \frac{\Z+\beta}{\alpha} \rfloor {+} \frac{1}{2}  := \left\{ \lfloor \tfrac{k+\beta}{\alpha} \rfloor  {+} \frac{1}{2}  : k \in \Z \right\} \subseteq \Z {+} \frac{1}{2}$ has no repeated elements and every two elements are separated by distance at least $1$.
Note also that $| \lfloor \frac{k+\beta}{\alpha} \rfloor + \frac{1}{2}  - \frac{k+\beta}{\alpha} | \leq \frac{1}{2}$ for all $k \in \Z$.
Then we can deduce from \eqref{eqn:Avdonin-condition-dilated-by-a} with $L = \frac{1}{8\alpha}$ that $\mathcal{E}( \lfloor \frac{\Z+\beta}{\alpha} \rfloor {+} \frac{1}{2} )$ is a Riesz basis for $L^2[0,\alpha]$ with lower Riesz bound $A>0$ which depends only on $\alpha, P$ and is \emph{independent of $\beta \in \R$}.
Finally, using Lemma \ref{lem:RB-basic-operations}(b) we conclude that for every $\beta \in \R$ the system $\mathcal{E}( \lfloor \frac{\Z+\beta}{\alpha} \rfloor )$ is a Riesz basis for $L^2[0,\alpha]$ with lower Riesz bound $A>0$.

Now, let $0 < \alpha \leq 1$ be a rational number.
Since $\alpha < 1$, the set $\lfloor \frac{\Z+\beta}{\alpha} \rfloor  \subseteq \Z$ has no repeated elements for each $\beta \in \R$.
Moreover, each $\lfloor \frac{\Z+\beta}{\alpha} \rfloor  \subseteq \Z$ is a periodic set, which implies that there are only finitely many distinct sequences of the form $\lfloor \frac{\Z+\beta}{\alpha} \rfloor$, $\beta \in \R$. For each $\beta \in \R$, there exist a number $P \in \N$ and a constant $c \in \R$ such that
\begin{equation}\label{eqn:rational-periodic}
\sum_{k=mP}^{(m+1)P-1}
\Big( \lfloor \tfrac{k+\beta}{\alpha} \rfloor  - \tfrac{k+c}{\alpha}  \Big)
\;=\; 0
\quad \text{for all} \;\; m \in \Z .
\end{equation}
For instance, if $\alpha = \frac{2}{3}$,
there are the only three distinct sequences of the form $\lfloor \frac{3}{2} (\Z{+}\beta) \rfloor$, $\beta \in \R$:
\[
\begin{split}
& \{ \cdots , \; -6, \; -5, \; - 3, \; -2, \; 0, \; 1, \; 3, \; \cdots \}  \quad\quad \;\;\;\, (\text{obtained with} \; \beta = 0) ,   \\
& \{ \cdots , \; -6, \; -4, \; - 3, \; -1, \; 0, \; 2, \; 3, \; 5, \; 6, \; \cdots \}   \;\; (\text{obtained with} \; \beta = \tfrac{1}{3}) , \\
& \{ \cdots , \; -5, \; -4, \; - 2, \; -1, \; 1, \; 2, \; 4, \; 5, \; 7, \; \cdots \}  \;\; (\text{obtained with} \; \beta = \tfrac{2}{3}) ,
\end{split}
\]
all of which have period $3$.
In this case, for $\beta = 0$ one can choose $P= 2$ and $c = - \frac{1}{6}$; indeed,
\[
\begin{split}
\lfloor \tfrac{3}{2} (\Z{+}0) \rfloor &= \{ \cdots , \; - 3, \; -2, \; 0, \; 1, \; 3,  \; \cdots \}   ,   \\
\tfrac{3}{2} (\Z{-} \tfrac{1}{6} ) &= \{ \cdots , \; - 3.25, \; -1.75, \; -0.25, \; 1.25, \; 2.75,  \; \cdots \}
\end{split}
\]
shows that \eqref{eqn:rational-periodic} holds.
The condition \eqref{eqn:rational-periodic} means that $\lfloor \frac{\Z+\beta}{\alpha} \rfloor$ is a pointwise perturbation of a reference lattice $\frac{\Z+c}{\alpha}$ and in fact satisfies the Avdonin condition with $L = 0$ with respect to that lattice; hence, the system $\mathcal{E}( \lfloor \frac{\Z+\beta}{\alpha} \rfloor )$ is a Riesz basis for $L^2[0,\alpha]$ with optimal lower Riesz bound, say, $A_\beta >0$.
Since there are only finitely many distinct sequences of the form $\lfloor \frac{\Z+\beta}{\alpha} \rfloor$, $\beta \in \R$, we have $A := \min_{\beta \in \R} A_\beta > 0$.
Hence, we conclude that for every $\beta \in \R$, the system $\mathcal{E}( \lfloor \frac{\Z+\beta}{\alpha} \rfloor )$ is a Riesz basis for $L^2[0,\alpha]$ with lower Riesz bound $A > 0$.
\hfill $\Box$ 

%




\end{appendices}


\bibliography{submission-arXiv/parallelepipeds-bibliography}

\end{document}